\def\?[#1]{\textbf{[#1]}\marginpar{\Large{\textbf{??}}}}
\newtheorem{theo}{Theorem}[section]
\newtheorem{prop}[theo]{Proposition}
\newtheorem{lemm}[theo]{Lemma}
\newtheorem{corr}[theo]{Corollary}
\theoremstyle{definition}
\newtheorem{rem}[theo]{Remark}
\numberwithin{equation}{section}
\newcommand{\mc}{\mathcal}
\newcommand{\rr}{\mathbb{R}}
\newcommand{\nn}{\mathbb{N}}
\newcommand{\cc}{\mathbb{C}}
\newcommand{\zz}{\mathbb{Z}}
\newcommand{\la}{\lambda}
\newcommand{\eps}{\epsilon}
\newcommand{\pl}{\partial}
\newcommand{\x}{\times}
\newcommand{\bbar}{\overline}
\newcommand{\cjd}{\rangle}
\newcommand{\cjg}{\langle}
\newcommand{\demi}{\tfrac{1}{2}}
\DeclareMathOperator{\rank}{rank}
\title{Invariant distributions and X-ray transform for Anosov flows}
\author{Colin Guillarmou}
\email{cguillar@dma.ens.fr}
\address{DMA, U.M.R. 8553 CNRS, \'Ecole Normale Superieure, 45 rue d'Ulm,
75230 Paris cedex 05, France}
\begin{document}
\begin{abstract}
For Anosov  flows preserving a smooth measure on a closed manifold $\mc{M}$, we define a natural self-adjoint operator $\Pi$  which maps into the space of flow invariant distributions in $\cap_{r<0} H^{r}(\mc{M})$ and whose kernel is made of coboundaries in $\cup_{s>0} H^{s}(\mc{M})$. We describe relations to the Livsic theorem and recover regularity properties of cohomological equations using this operator. For Anosov geodesic flows on the unit tangent bundle $\mc{M}=SM$ of a compact manifold $M$, we apply this theory to study  $X$-ray transform on symmetric tensors on $M$. In particular we prove existence of flow invariant distributions on $SM$ with prescribed push-forward on $M$ and a similar version for tensors. This allows us to show injectivity of the X-ray transform on an Anosov surface: any divergence-free symmetric tensor on $M$ which integrates to $0$ along all closed geodesics is zero.
\end{abstract}

\maketitle

\section{Introduction}
In this work, we develop new approaches and tools coming from the theory of anisotropic Sobolev spaces for studying cohomological equations, invariant distributions and X-ray transform for Anosov flows preserving a Lebesgue type measure.

Ergodicity of the geodesic flow on a closed manifold $M$ can be described by saying that the only invariant $L^2(SM)$ functions are the constants. By contrast, if the flow is Anosov, there exist infinitely many invariant distributions, as one can take Dirac measures supported on closed geodesic. These distributions are not very regular in terms of Sobolev regularity, and it is natural to ask if there exist invariant distributions which are in Sobolev spaces $H^{r}(SM)$ for small $r<0$. In connection to tensor tomography, it was recently shown by Paternain-Salo-Uhlmann \cite{PSU1,PSU2} that there exist infinitely many invariant distributions which are in $H^{-1}(SM)$ for Anosov geodesic flows. More precisely, if one fixes $f\in L^2(M)$ with $\int_Mfdv_g=0$, then they show that there is an invariant distribution $w\in 
H^{-1}(SM)$ such that ${\pi_0}_*w=f$ if $\pi_0:SM\to M$ is the projection on the base. Their work is based on  
Fourier decomposition in the fibers of $SM$ as initiated by Guillemin-Kazhdan \cite{GK1,GK2}. For hyperbolic surfaces, certain invariant distributions 
in $H^{-1/2-\eps}(SM)$ for all $\eps>0$ are studied by Anantharaman-Zelditch \cite{AnZe} in connection with quantum ergodicity.

In the present work, we address several related questions. Recall that a flow generated by a vector field $X$ on a compact manifold $\mc{M}$ is Anosov if there is a continuous splitting $T\mc{M}=\rr X\oplus E_u\oplus E_s$ where $E_s$ and $E_u$ are stable and unstable spaces (see \eqref{stabunstab}). The dual bundles $E_u^*$ and $E_s^*$ are defined by $E_u^*(E_u\oplus \rr X)=0$ and $E_s^*(E_s\oplus \rr X)=0$.
Our first result is 
\begin{theo}\label{th0}
Let $\mc{M}$ be a compact manifold and $X$ be a smooth vector field generating an Anosov flow preserving a smooth invariant probability measure $d\mu$, and assume the flow is mixing for this measure. Then there exists a bounded operator 
\[\Pi:H^{s}(\mc{M})\to H^{r}(\mc{M}) ,\quad \forall s>0, \,\, \forall r<0,\] 
with infinite dimensional range, dense in the space $\mc{I}:=\{w\in C^{-\infty}(\mc{M}), Xw=0\}$ of invariant distributions, that is self-adjoint as a map $H^{s}(\mc{M})\to H^{-s}(\mc{M})$ for any $s>0$ and 
satisfies\footnote{WF denotes wave front set of distributions, cf. \cite[Chapter 8.1]{Ho1}} 
\[\begin{gathered}
X\Pi f=0,\,\,\,  \forall f\in H^s(\mc{M}), \,\, \textrm{ and } \,\, \Pi Xf=0,\,\,\, \forall f\in H^{s+1}(\mc{M}),\\
f\in C^\infty(\mc{M})\Longrightarrow {\rm WF}(\Pi f)\subset E_u^*\cup E_s^*.
\end{gathered}\]
If $f\in H^{s}(\mc{M})$ with $\cjg f,1\cjd_{L^2}=0$, then $f\in \ker\Pi$ if and only if there exists $0<s'\leq s$ and a  solution $u\in H^{s'}(\mc{M})$ to $Xu=f$, in which case $u$ is actually in $H^{s}(\mc{M})$ and is the only solution 
in $L^2$ modulo constants. 
\end{theo}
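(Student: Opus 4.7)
The plan is to realize $\Pi$ as the sum of the holomorphic parts at $\lambda=0$ of the two resolvents $R_+(\lambda)=(-X-\lambda)^{-1}$ and $R_-(\lambda)=(X-\lambda)^{-1}$, which are defined on $L^2(\mc{M},d\mu)$ for $\Re\lambda>0$ by integrating the flow in forward and backward time, and extend meromorphically across $\Re\lambda=0$ as operators on anisotropic Sobolev spaces $\mc{H}_\pm$ in the sense of Faure--Sj\"ostrand and Dyatlov--Zworski. The mixing assumption makes $\lambda=0$ a simple pole of each $R_\pm$ with rank-one residue $-\Pi_0$, where $\Pi_0 f=\langle f,1\rangle_{L^2}\cdot 1$; one then sets $\Pi:=R_+^H(0)+R_-^H(0)$, with $R_\pm^H(0)$ the holomorphic parts. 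Expanding $(\mp X-\lambda)R_\pm(\lambda)=I$ and $R_\pm(\lambda)(\mp X-\lambda)=I$ in Laurent series near $\lambda=0$ yields the identities $X R_+^H(0)=\Pi_0-I$, $X R_-^H(0)=I-\Pi_0$ and their symmetric analogues, whence $X\Pi=\Pi X=0$ on the appropriate Sobolev spaces.

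For the ``if'' direction, suppose $u\in H^{s'}(\mc{M})$ with $s'>0$ satisfies $Xu=f$. Applying the identity $R_+(\lambda)X=-I-\lambda R_+(\lambda)$ to $u$ and letting $\lambda\to 0$ along $\Re\lambda>0$ (the polar part cancels because $\Pi_0 f=\langle Xu,1\rangle=0$) gives $R_+^H(0)f=-u+\langle u,1\rangle$; the mirror computation produces $R_-^H(0)f=u-\langle u,1\rangle$. Summing yields $\Pi f=0$. Conversely, assume $\Pi f=0$ and $\langle f,1\rangle=0$: the same formulas dictate setting $u:=-R_+^H(0)f$, and the identity $X R_+^H(0)=\Pi_0-I$ together with $\Pi_0 f=0$ verifies $Xu=f$ distributionally. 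The hypothesis $\Pi f=0$ exactly says $-R_+^H(0)f=R_-^H(0)f$, so this same $u$ equals $R_-^H(0)f\in\mc{H}_-$ as well.

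The main obstacle is the bootstrap $u\in\mc{H}_+\cap\mc{H}_-\Rightarrow u\in H^s(\mc{M})$. Here the microlocal structure enters: the escape function defining $\mc{H}_+$ (resp.\ $\mc{H}_-$) is chosen positive near $E_u^*$ (resp.\ $E_s^*$) and negative on the other radial set, so any element of $\mc{H}_+\cap\mc{H}_-$ carries microlocal $H^s$ regularity both near $E_u^*$ and near $E_s^*$. The radial source/sink estimates at these two sets, combined with propagation of singularities for the transport operator $X$ along its symbol flow lines (each of which, outside the radial sets, joins a neighborhood of $E_u^*$ to one of $E_s^*$), and the fact that $Xu=f\in H^s$, then propagate the $H^s$ regularity to every point of the cotangent bundle, giving $u\in H^s(\mc{M})$. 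The same argument also yields the regularity claim in the ``if'' direction: any $L^2$ solution differs from $-R_+^H(0)f$ by an element of $\ker(X)\cap L^2$, which by ergodicity (a consequence of mixing) equals $\rr\cdot 1$, giving uniqueness modulo constants and the claimed $H^s$ regularity of $u$.
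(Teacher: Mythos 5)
Your core strategy is the same as the paper's: define $\Pi$ as the sum of the holomorphic parts of the two resolvents at $\lambda=0$ (the paper writes $\Pi=R_0+R_0^*$), use mixing to get a simple pole with rank-one residue, derive $X\Pi=\Pi X=0$ from the Laurent expansion, and characterize $\ker\Pi$ through the pair of solutions $u_\pm$ produced by the two resolvents, with the bootstrap $u\in\mc{H}_+\cap\mc{H}_-\Rightarrow u\in H^s$ via microlocal regularity near $E_u^*\cup E_s^*$ plus propagation of singularities (and elliptic estimates away from the characteristic set). That part is correct, modulo a bookkeeping slip: your $R_+(\lambda)=(-X-\lambda)^{-1}$ is in fact the backward-time resolvent $-\int_{-\infty}^0e^{\lambda t}f\circ\varphi_t\,dt$, and the escape function for the space on which $(-X-\lambda)^{-1}$ acts is positive near $E_s^*$, not $E_u^*$ (you have the two radial sets swapped). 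These are only label errors and do not affect the argument, though as stated your description of which space carries $H^s$ regularity where is the reverse of what the Faure--Sj\"ostrand construction gives.

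There are, however, genuine gaps: the theorem asserts three things your proposal never addresses.
First, the wave-front-set bound ${\rm WF}(\Pi f)\subset E_u^*\cup E_s^*$ for $f\in C^\infty$. This does not follow from the anisotropic-space mapping properties alone; the paper obtains it from the Dyatlov--Zworski wave-front-set description of the Schwartz kernel of the extended resolvent (their Proposition~3.3, reproduced as Proposition~\ref{dyatlovzworski}), composed with $f$ via H\"ormander's calculus of wave front sets. You would need to invoke that structural result explicitly.
Second, the infinite-dimensionality of the range. The paper proves this by taking bump functions $f_k$ supported near distinct closed orbits $\gamma_k$ with $\int_{\gamma_j}f_k=\delta_{jk}$: if some linear combination were in $\ker\Pi$ then by the characterization you just proved there would be a smooth coboundary, forcing all period integrals to vanish, a contradiction. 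This is a short argument but it is part of the statement.
Third, density of ${\rm Ran}(\Pi)$ in $\mc{I}$. The paper's Corollary~\ref{directcor} shows $\mc{I}_{-s}^\perp=\ker(\Pi|_{H^s})$ using the characterization of $\ker\Pi$, an approximation lemma to justify $\cjg w,Xu\cjd=0$ for $w\in\mc{I}_{-s}$ and $u\in H^s$ with $Xu\in H^s$, and the Hilbert-space duality $({\rm Ran}\,\Pi)^\perp=\ker\Pi^*=\ker\Pi$. None of this appears in your proposal.

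Finally, one imprecision worth fixing in the bootstrap paragraph: generic bicharacteristics of $p=\xi(X)$ do not join a neighborhood of $E_u^*$ to one of $E_s^*$. The correct statement is that points outside the characteristic set $E_u^*\oplus E_s^*$ are elliptic, while bicharacteristics inside the characteristic set flow from $E_s^*$ to $E_u^*$; either way every trajectory in finite time reaches a region where $H^s$ regularity is already known (near $E_u^*\cup E_s^*$ from the anisotropic spaces, or by ellipticity), which is what lets propagation of singularities close the argument.
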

The result also holds with a weaker assumption than mixing, see Theorem \ref{thPi}. 
This result shows that the ergodicity statement in terms of unique $L^2$-invariant distribution cannot be extended to less regular Sobolev spaces, and that the space of invariant distributions in $\cap_{r<0}H^{r}(\mc{M})$ is infinite dimensional. 
To define the operator $\Pi$, we use the recent work of Faure-Sj\"ostrand \cite{FaSj} on the Ruelle spectrum of Anosov flows, which involves anisotropic Sobolev spaces associated to the hyperbolic dynamic; such spaces appeared first in \cite{BKL,Li,GoLi,BaTs,BuLi} to study speed of mixing of Anosov diffeomorphisms and flows. In fact, the operator can also be defined as a weak limit of damped correlations 
\begin{equation}\label{weaklim} 
\cjg \Pi f,\psi\cjd =\lim_{\la\to 0^+}\int_{\rr}e^{-\la |t|}\cjg f\circ \varphi_t,\psi\cjd dt, \quad 
f,\psi\in C^\infty(\mc{M})
\end{equation}
if $\int_{\mc M} f d\mu=0$.
The microlocal structure (wave front set) of $\Pi$ follows directly from the recent work of Dyatlov-Zworski \cite{DyZw1}, see Proposition \ref{dyatlovzworski} below.
As a corollary, we also recover smoothness result for 
the Livsic cohomological equation in our setting:
\begin{corr}\label{corlivsic}
With the same assumptions as in Theorem \ref{th0}, for all $s>\dim(\mc{M})/2$ and
$f\in H^{s}(\mc{M})$ satisfying $\int_{\gamma}f=0$ for all closed orbit $\gamma$ of $X$, 
there exist $u\in H^{s}(\mc{M})$ such that $Xu=f$. In particular if $f\in C^\infty(\mc{M})$ then $u\in C^\infty(\mc{M})$.
\end{corr}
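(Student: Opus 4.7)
The plan is to combine the classical Livsic cohomological theorem (which yields a Hölder continuous solution) with Theorem \ref{th0} (whose role is to sharpen the regularity of such a solution up to $H^s$). First I would verify the orthogonality $\langle f,1\rangle_{L^2}=0$ required to invoke Theorem \ref{th0}: since $s>\dim(\mc{M})/2$, Sobolev embedding makes $f$ continuous, and Bowen's closing/specification lemma for mixing Anosov flows provides a sequence of periodic-orbit probability measures $\mu_{\gamma_n}:=T_{\gamma_n}^{-1}\int_0^{T_{\gamma_n}}\delta_{\varphi_t(x_{\gamma_n})}\,dt$ converging weak-$*$ to $\mu$. The hypothesis gives $\int f\, d\mu_{\gamma_n}=T_{\gamma_n}^{-1}\int_{\gamma_n} f=0$ for every $n$, so continuity of $f$ forces $\int f\, d\mu=0$.

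Next, Sobolev embedding actually gives some $\alpha\in(0,1)$ with $H^s(\mc{M})\hookrightarrow C^{0,\alpha}(\mc{M})$, so $f$ is Hölder. The classical Livsic theorem then produces a Hölder continuous solution $u\in C^{0,\alpha'}(\mc{M})$ of $Xu=f$ for some $0<\alpha'\leq\alpha$. Since $C^{0,\alpha'}(\mc{M})\hookrightarrow H^{s'}(\mc{M})$ for every $0<s'<\alpha'$, I am in a position to invoke the ``only if'' direction of Theorem \ref{th0} --- more precisely its concluding assertion that any positive-order Sobolev solution is actually in $H^s(\mc{M})$ --- which upgrades the solution to the desired regularity. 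The smoothness statement then follows by applying the conclusion for every $s$: if $f\in C^\infty(\mc{M})$, then $u\in\bigcap_{s>0}H^s(\mc{M})=C^\infty(\mc{M})$.

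The main obstacle being handled is the regularity bootstrap from $C^{0,\alpha'}$ to $H^s$. Classical proofs of the smooth Livsic theorem (e.g.\ de la Llave--Marco--Moriyon) achieve this via a delicate induction on derivatives along the only-Hölder stable and unstable foliations; here Theorem \ref{th0} absorbs the whole difficulty in one microlocal step through the anisotropic Sobolev spaces adapted to $E_u^*$ and $E_s^*$, so that the corollary follows by a short deduction. The steps before and after the bootstrap --- checking the zero mean, invoking classical Livsic, and iterating in $s$ for smoothness --- are routine.
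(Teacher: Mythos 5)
Your argument is correct and follows the same route as the paper's proof: Sobolev embedding gives H\"older continuity, the classical Livsic theorem for transitive Anosov flows yields a H\"older solution, the embedding $C^{0,\alpha'}(\mc{M})\hookrightarrow H^{s'}(\mc{M})$ for $s'<\alpha'$ on a compact manifold places it in a positive-order Sobolev space, and Theorem \ref{th0} (through $f\in\ker\Pi$) upgrades the regularity to $H^s$, with smoothness following by letting $s\to\infty$. The preliminary verification of $\cjg f,1\cjd_{L^2}=0$ via Bowen's closing lemma is superfluous: once the H\"older solution $u$ exists, $\cjg f,1\cjd=\cjg Xu,1\cjd=-\cjg u,X1\cjd=0$ follows automatically from the invariance of $d\mu$.
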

De la Llave-Marco-Moriyon \cite{DMM}, Journ\'e \cite{Jo} or Hurder-Katok \cite{HuKa}
proved a version of Corollary \ref{corlivsic} with a regularity statement in H\"older spaces $C^{k,\alpha}$ (the case of geodesic flows with negative curvature was done previously in \cite{GK1,GK2}), but
Theorem \ref{th0} gives a Sobolev regularity version of Livsic theorem, which did not seem to be known. In particular, here we even get a characterization of coboundaries in $H^s$ for any $s>0$, including cases of unbounded functions (if $0<s<n/2$), which is not possible by the usual Livsic theorem.

Next, we focus on the setting of Anosov geodesic flows, where there is an additional structure.
In this case, $\mc{M}=SM$ is the unit tangent bundle of a compact manifold, which is a sphere bundle with natural projection $\pi_0:SM\to M$. In the case of manifolds with boundary, we can define the  X-ray transform $I_0$ on functions when the metric is simple (i.e.\ the boundary is strictly convex boundary and the exponential map is a diffeomorphism at any point): this is defined as a map $I_0: C^\infty(M)\to C^{\infty}(\pl_-SM)$ where $\pl_-SM$ is the inward pointing boundary of $SM$, and $I_0f(x,v)=\int_{0}^{\ell(x,v)}f(\pi_0(\varphi_t(x,v)))dt$ is  the integral of $f$ along the geodesic in $M$ with starting point $x$ and tangent vector $v$ and $\ell(x,v)$ is the length of that geodesic. The question of injectivity of $I_0$ and surjectivity of $I_0^*$ are important in the study of inverse problems and tomography, and the description of the operator $I_0^*I_0$ as an elliptic pseudo-differential operator has been fundamental to solve these questions, see for instance \cite{PeUh,UhVa}.
In the Anosov setting  (the manifold does not have boundary), we define  
 \[\Pi_0 = {\pi_0}_*\Pi\, \pi_0^* : C^\infty(M)\to C^{-\infty}(M).\]
This operator will play the role of $I_0^*I_0$ when there is no boundary. We show 
\begin{theo}\label{Th2}
Let $(M,g)$ be a compact manifold with Anosov geodesic flow. Then\\ 
1) The operator $\Pi_0$ is an elliptic pseudo-differential operator of order $-1$, with principal symbol $\sigma(\Pi_0)(x,\xi)=
C_n|\xi|_g^{-1}$ for some $C_n\not=0$ depending only on $n$. \\
2) The operator $\Pi\, \pi_0^*$ is well-defined on $H^{s}(M)$ for all $s\in \rr$ and is injective.\\
3) Let $s\leq 1$, then for any $f\in H^{s}(M)$, there exists $w\in C^{-\infty}(SM)$  
so that $Xw=0$ and ${\pi_0}_*w=f$ and $w$ has the regularity 
\[ w\in \bigcap_{r<0}H^{r}(SM) \textrm{ if }s=1 \textrm{ and }  w\in H^{s-1}(SM), \textrm{ if }s<1.\] 
\end{theo}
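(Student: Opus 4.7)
The plan is to prove the three parts in sequence: Part 1 establishes the elliptic, self-adjoint structure of $\Pi_0$; Part 2 supplies the injectivity of $\Pi\pi_0^*$ that in turn shows $\Pi_0$ has trivial kernel on mean-zero functions; Part 3 is then a Fredholm-theoretic inversion of $\Pi_0$ combined with the obvious constant solution for the mean.

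For Part 1, I begin from the damped-correlation formula \eqref{weaklim} applied to $\pi_0^* f,\pi_0^* h$ with $f,h\in C^\infty(M)$ mean-zero:
\[\langle \Pi_0 f,h\rangle_M=\lim_{\lambda\to 0^+}\int_{\mathbb{R}}e^{-\lambda|t|}\int_{SM}(f\circ\pi_0\circ\varphi_t)\,(h\circ\pi_0)\,d\mu\,dt.\]
Split the time integral at $|t|=t_0$. For $|t|\le t_0$ small, changing variables via the exponential map identifies the small-time Schwartz kernel with that of $I_0^*I_0$ near the diagonal in $M\times M$, which is a classical elliptic $\Psi$DO of order $-1$ with principal symbol $C_n|\xi|_g^{-1}$ by Sharafutdinov's computation. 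For $|t|\ge t_0$, the wavefront of $\Pi$ lies in $E_u^*\cup E_s^*$ (Theorem \ref{th0}), while the wavefront of $\pi_0^*$ lies in the vertical conormal $N^*(\text{fibres of }\pi_0)$; since $\mathbb{R}X\oplus E_u$ and the vertical tangent space together span $TSM$ pointwise under the Anosov hypothesis, these two wavefront sets are transverse and the $\Psi$DO/FIO composition calculus shows the corresponding piece of $\Pi_0$ is smoothing. Adding the two contributions yields the claim.

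For Part 2, the extension of $\Pi\pi_0^*$ to $H^s(M)$ for every $s\in\mathbb{R}$ uses the same transversality: since $\WF(\pi_0^*g)\subset N^*(\text{fibres})$ is disjoint from $E_u^*\cup E_s^*$, the Faure--Sj\"ostrand propagation estimates give a bounded extension, with image in $\bigcap_{r<0}H^r(SM)$ when $s\ge 0$ and in $H^s(SM)$ otherwise. For injectivity, suppose $\Pi\pi_0^* f=0$; subtracting the mean $\bar f=\Vol(M)^{-1}\int_M f$ reduces to the case $f$ mean-zero, hence $\pi_0^* f$ mean-zero on $SM$ and lying in $\ker\Pi$. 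By Theorem \ref{th0} there exists $u\in H^s(SM)$ with $\pi_0^* f=Xu$. The remaining step $f=0$ is the injectivity of the X-ray transform on functions for closed Anosov manifolds: applying a vertical vector field $V$ one gets $X(Vu)=-[V,X]u$, and combined with a Livsic-type integration over the dense set of closed orbits (yielding $\int_\gamma f=0$ for every closed geodesic $\gamma$) together with the Pestov identity of Paternain--Salo--Uhlmann, one concludes $f=0$.

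For Part 3, decompose $f=\bar f+f_0$ with $f_0$ mean-zero. Put $w_1:=c_n^{-1}\bar f\in C^\infty(SM)$ where $c_n=\Vol(S^{n-1})$; this is $X$-invariant with ${\pi_0}_* w_1=\bar f$. For $f_0$: Part 1 makes $\Pi_0$ an elliptic self-adjoint positive $\Psi$DO of order $-1$, and Part 2 shows $\ker\Pi_0$ on mean-zero functions is trivial. Moreover $\int_M\Pi_0 f=c\int_M f$ for some constant $c\ne 0$ (by self-adjointness applied to $\pi_0^* 1$), so $\Pi_0$ preserves the splitting into constants and mean-zero. Fredholm theory then provides a bounded inverse $\Pi_0^{-1}\colon H^s_0(M)\to H^{s-1}_0(M)$. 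Setting $g:=\Pi_0^{-1}f_0\in H^{s-1}(M)$ and $w_0:=\Pi\pi_0^* g$ (defined by the extension from Part 2), we have $Xw_0=0$ by Theorem \ref{th0} and ${\pi_0}_* w_0=\Pi_0 g=f_0$, so $w:=w_0+w_1$ solves the problem. The regularity follows from the output regularity of $\Pi\pi_0^*$: for $s=1$, $g\in H^0(M)$ lies in the regime where Part 2 gives $w_0\in\bigcap_{r<0}H^r(SM)$; for $s<1$, $g\in H^{s-1}(M)$ has negative Sobolev order and the extension yields $w_0\in H^{s-1}(SM)$.

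The principal obstacle is the final step of Part 2, namely $\pi_0^* f=Xu\Rightarrow f=0$ for mean-zero $f$: this is the tensor-tomography injectivity for $0$-tensors on closed Anosov manifolds, requiring genuinely dynamical input (Pestov-type identities or anisotropic Sobolev estimates) rather than mere symbolic calculus. Once it is in hand, Parts 1 and 3 follow from the elliptic/Fredholm machinery above, and the three statements of Theorem \ref{Th2} assemble cleanly.
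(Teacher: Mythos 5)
Your overall decomposition of the theorem into three parts matches the paper (Theorem~\ref{microlocalPi0}, Lemma~\ref{boundednegative}, Corollary~\ref{surj0}), and the route for Part~2 (ellipticity of $\Pi_0$, then Theorem~\ref{thPi}, then Dairbekov--Sharafutdinov via Pestov) is the right one. There are, however, two genuine gaps.

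In Part~1, after reducing the near-diagonal singularity to Pestov--Uhlmann and invoking $H^*\cap E_u^*=H^*\cap E_s^*=\{0\}$ (both correct and matching the paper), you treat the tail $|t|\geq t_0$ using only the property ${\rm WF}(\Pi f)\subset E_u^*\cup E_s^*$ for smooth $f$. That is a statement about the operator \emph{acting on} smooth functions: it shows $\Pi_0$ preserves smoothness, which every $\Psi$DO does and which gives no control on the Schwartz kernel away from the diagonal. What must be controlled is the wave front set of the \emph{kernel} of $R_0$, which by Proposition~\ref{dyatlovzworski} contains, besides $E_u^*\times E_s^*$ and $N^*\Delta$, the flow-out set $\Omega_+$ over the characteristic variety $\{\xi(X)=0\}$. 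After push-forward by $(\pi_0\otimes\pi_0)_*$, this $\Omega_+$ contribution concentrates exactly at pairs of conjugate points of $(M,g)$, and one needs Klingenberg's theorem that Anosov geodesic flows have no conjugate points to conclude it is empty (the set $S_2$ in the paper's proof). That dynamical input is absent from your sketch and cannot be replaced by a transversality remark.

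Part~3 takes a genuinely different route: you attempt to invert $\Pi_0$ on mean-zero functions by Fredholm theory, rather than running the paper's variational argument with the bilinear form $B_s(u,u')=\langle\Pi\pi_0^*u,\Pi\pi_0^*u'\rangle_{H^{-|s|}}+\langle\Pi_0u,\Pi_0u'\rangle_{H^{1-s}}$ and Riesz representation. This would be cleaner if it worked, but it rests on the claim that Part~2 gives $\ker\Pi_0\cap\{\text{mean-zero}\}=0$, which is a non-sequitur: injectivity of $\Pi\,\pi_0^*$ does not imply injectivity of $\Pi_0={\pi_0}_*\Pi\,\pi_0^*$, since ${\pi_0}_*$ has a large kernel. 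The paper even remarks after Lemma~\ref{boundednegative} that it is unclear why $\Pi_0f=0$ should force $\Pi\pi_0^*f=0$. One could repair your route by first proving the positivity $\langle\Pi g,g\rangle=2\Re\langle R_0g,g\rangle=\lim_{\la\to 0^+}\la\|R_+(\la)g\|^2_{L^2}\geq 0$ for mean-zero smooth $g$, whence Cauchy--Schwarz upgrades $\langle\Pi\pi_0^*f,\pi_0^*f\rangle=\langle\Pi_0f,f\rangle=0$ to $\Pi\pi_0^*f=0$; but that is an argument you must actually supply. There is also a small error: you assert $\int_M\Pi_0 f=c\int_M f$ with $c\neq0$, but since $R_0(1)=R_0^*(1)=0$ one has $\Pi(1)=0$, hence $c=0$ and $\Pi_0$ maps \emph{everything} to mean-zero; this changes (though does not destroy) the Fredholm bookkeeping. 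The paper's $B_s$ construction is engineered precisely to avoid having to settle injectivity of $\Pi_0$.
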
 
If we were working in the setting of simple manifolds (case with boundary),  3) would provide surjectivity for the operator $I_0^*$. The result in 3) improves the recent work \cite{PSU2} in terms of regularity and gives in addition a precise description of the singularities of the invariant distribution (through the wave front set). Finally we describe tomography for $m$-cotensors: symmetric cotensors $f\in C^\infty(M,\otimes_S^mT^*M)$ can be mapped to smooth functions on $SM$ via the map $(\pi_m^*f)(x,v):= \cjg f(x),\otimes^m v\cjd$, and 
 denote by ${\pi_m}_*: C^{-\infty}(SM)\to C^{-\infty}(M,\otimes_S^mT^*M)$ its adjoint acting on distributions.
The divergence of $m$-cotensors is defined by $D^*f:=-\mc{T}(\nabla f)$ where $\nabla$ is the Levi-Civita connection and $\mc{T}$ is the trace defined in \eqref{deftrace}. 
As for $m=0$, we show in Theorem \ref{microlocalPim} and Corollary \ref{corsurjm} a result similar to Theorem \ref{Th2} for the operator $\Pi_m:={\pi_m}_*\Pi\, \pi_m^*$; for instance 
it is a pseudo-differential operator of order $-1$ which is elliptic on the space of divergence-free cotensors. We also show the existence of invariant distributions $w$ with regularity as in 3) with prescribed divergence-free value ${\pi_m}_*w$, assuming injectivity of $\Pi\, \pi_m^*|_{\ker D^*}$. We remark that this injectivity 
follows from the work of Croke-Sharafutdinov \cite{CS} if the sectional curvatures of $(M,g)$ are non-positive; 
in the Anosov setting, the kernel is trivial for $m=0,1$ as a consequence of the work of Dairbekov-Sharafutdinov \cite{DaSh}.

To conclude, we give applications to the injectivity of the X-ray transform on Anosov surfaces. 
Let $(M,g)$ be a Riemannian surface and let $\varphi_t:SM\to SM$ be its geodesic flow on the unit tangent bundle $SM$ of $M$.  
A closed geodesic is a curve $\gamma$ on $M$ such that there exists $\ell>0$ and 
$\gamma:=\{\pi_0(\varphi_t(x,v))\in M; t\in[0,\ell], \varphi_\ell(x,v)=(x,v)\}$; the smallest such $\ell$ is denoted $\ell_\gamma$ and 
called the length of $\gamma$. A parametrization of $\gamma$ is given by 
$\gamma(t)=\pi_0(\varphi_t(x,v))$ if $t\in [0,\ell_\gamma]$ and $x\in \gamma$ and $v\in S_xM$ is tangent to 
$\gamma$. 
The set of closed geodesics of $M$ is countable and denoted by $\mc{G}$. We define the X-ray transform on symmetric $m$-cotensors  
as the linear map 
\[\begin{gathered} 
I_m:  C^{\infty}(M,\otimes_S^mT^*M)\to (\mc{G}\to \rr), \quad 
I_m(f)(\gamma):= \int_{0}^{\ell_\gamma}\cjg f(\gamma(t)),\otimes^m \dot\gamma(t)\cjd dt   
\end{gathered}\] 
where dot denotes the time derivative. If $D$ denotes symmetrized covariant derivative, we remark 
that any $f$ which is written under the form $f=Dh$ for some 
$h\in C^{\infty}(M,\otimes_S^{m-1}T^*M)$ satisfies $I_m(f)(\gamma)=0$ for all $\gamma\in\mc{G}$ and thus 
the kernel $\ker I_m$ is infinite dimensional for $m\geq 1$. It is then natural to consider $I_m$ acting on divergence-free 
cotensors.  
In negative curvature, it has been proved in \cite{CS} that $\ker I_m\cap \ker D^*=0$ (this was first shown by \cite{GK1} for negatively curved surfaces), then the proof that $I_0$ is injective and 
that $\ker I_1\cap \ker D^*=0$ for Anosov manifolds appeared in \cite{DaSh}. 
More recently, Paternain-Salo-Uhlmann \cite{PSU1} proved that $\ker I_2\cap \ker D^*=0$ for Anosov surfaces.
Here, we prove injectivity of X-ray transform on all divergence-free symmetric cotensors for Anosov surfaces, which has been an important open problem in the field (see for example \cite[Problem 1.7]{DaSh}).
\begin{theo}\label{injectm}
On  a Riemannian surface with Anosov geodesic flow, then for all $m\geq 0$ we have 
$\ker I_m\cap \ker D^*=0$ and $\ker \Pi \, \pi_m^*\cap \ker D^*=0$.
\end{theo}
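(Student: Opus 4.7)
My plan is first to deduce the $I_m$ equality from the $\Pi\pi_m^*$ equality via Corollary \ref{corlivsic}: if $f\in\ker I_m$, then $\pi_m^*f\in C^\infty(SM)$ integrates to zero along every closed orbit of the geodesic flow, so there exists $u\in C^\infty(SM)$ with $Xu=\pi_m^*f$, and hence $\Pi\pi_m^*f=\Pi X u=0$. It then suffices to show $\ker\Pi\pi_m^*\cap\ker D^*=0$. Given such an $f\in C^\infty(M,\otimes_S^m T^*M)$ with $D^*f=0$ and $\Pi\pi_m^*f=0$, the smooth function $\pi_m^*f$ lies in $\ker\Pi$ and in particular has zero mean (constants being invariant, $\Pi c=c$), so Theorem \ref{th0} delivers $u\in C^\infty(SM)$ solving $Xu=\pi_m^*f$.

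\textbf{Fourier decomposition on fibers.} I would then exploit the two-dimensionality of $M$: fibers of $\pi_0:SM\to M$ are circles, and every smooth $u$ admits a Guillemin-Kazhdan Fourier expansion $u=\sum_{k\in\zz}u_k$ with $u_k$ in the $k$-th eigenspace $\Omega_k$ of the vertical rotation $V$, with rapidly decaying $L^2$-norms. Writing $X=\eta_++\eta_-$ for the raising/lowering operators $\eta_\pm:\Omega_k\to\Omega_{k\pm 1}$, and noting that $\pi_m^*f$ has non-zero Fourier content only in modes $k$ with $|k|\leq m$ and $k\equiv m\,(\mathrm{mod}\,2)$, the equation $Xu=\pi_m^*f$ becomes the recursion
\[\eta_+u_{k-1}+\eta_-u_{k+1}=(\pi_m^*f)_k, \qquad k\in\zz,\]
which reduces to $\eta_+u_{k-1}=-\eta_-u_{k+1}$ for all $|k|>m$.

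\textbf{Main step: finite Fourier degree and conclusion.} The crux, and where the argument genuinely extends the $m\leq 2$ analyses of Paternain-Salo-Uhlmann, is to show that $u$ has finite Fourier degree, i.e., $u_k=0$ for $|k|>m$. I plan to attack this via a Pestov-type energy identity on $SM$, schematically
\[\|VXu\|^2=\|XVu\|^2-\cjg K Vu,Vu\cjd+\|Xu\|^2\]
(with $K$ the Gaussian curvature), applied Fourier-mode by Fourier-mode. The Anosov hypothesis on a surface is equivalent to the existence of pointwise stable and unstable Riccati solutions along every orbit, and this upgrades the Pestov identity to a strict positivity estimate of the form $\|\eta_+v\|^2-\|\eta_-v\|^2\geq c\|v\|^2$ for smooth $v$ supported in a sufficiently high Fourier stratum. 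Coupled with the recursion above and applied iteratively on the tails $\sum_{|k|>N}u_k$, this forces $u_k=0$ for all $|k|>m$. This Pestov+Anosov step is the main obstacle; once it is in place, $Xu=\pi_m^*f$ becomes a fiber-wise identity among finitely many Fourier modes, and combining it with $D^*f=0$ (translated via $X=\eta_++\eta_-$ and $V$ into a linear constraint on the top modes $u_{\pm(m+1)},u_{\pm m}$) forces $f=0$, completing the proof.
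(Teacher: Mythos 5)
Your opening reduction is fine and matches what the paper does in spirit: $\ker I_m\subset \ker \Pi\,\pi_m^*$ by Corollary \ref{corlivsic}, and once $\Pi\pi_m^*f=0$ with $f$ smooth and divergence--free, one produces $u\in C^\infty(SM)$ with $Xu=\pi_m^*f$ (via Lemma \ref{boundednegativem}.2). The Fourier recursion $\eta_+u_{k-1}+\eta_-u_{k+1}=0$ for $|k|>m$ is also correctly set up. Everything after that, however, hinges on a claim that is not a theorem and cannot be made into one along the lines you sketch.

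The step you label ``the main obstacle'' is in fact the exact obstruction that made this result an open problem. Mode-by-mode, the Pestov identity on a surface reduces to a Guillemin--Kazhdan type identity which, for $v\in H_k$ with $k$ large, controls $\|\eta_+v\|^2-\|\eta_-v\|^2$ by a quantity of the form $\tfrac{k}{2}\int_{SM}K\,|v|^2\,d\mu$ (Gaussian curvature $K$). On an Anosov surface $K$ can change sign, so this quantity has no definite sign, and the inequality $\|\eta_+v\|^2-\|\eta_-v\|^2\ge c\|v\|^2$ for $v$ in a high Fourier stratum simply does not follow. The Anosov/Riccati positivity you invoke gives a global energy inequality (the Dairbekov--Sharafutdinov estimate used to treat $m=0,1$, and pushed to $m=2$ by Paternain--Salo--Uhlmann), but it is \emph{not} compatible with the fiberwise Fourier splitting when there are more than a couple of coupled modes; decomposing mode-by-mode destroys the sign. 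This is precisely what is recorded as the difficulty in Remark 9.4 of \cite{PSU1}, and it is what the paper explicitly says its new operator $\Pi$ is designed to circumvent. If your proposed estimate were true one would have a dramatically shorter proof than the one in the paper, bypassing all of Section 3.3.

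The actual argument in the paper never proves ``$u$ has finite Fourier degree'' by an energy estimate. Instead, it works dually: it uses Corollary \ref{corsurjm} to produce flow-invariant distributions $w$ with prescribed degree-one push-forward ${\pi_1}_*w$, uses the wave front set control on $\Pi$ (Proposition \ref{dyatlovzworski}) to show these $w$ are microlocalized away from $\{p_V=0\}$, applies the Szeg\"o projector $S$ (Lemma \ref{lemm39}, Corollary \ref{product}) so that products $S(w_{k_1})\cdots S(w_{k_m})$ are well-defined invariant distributions, factors the $\eta_-$-holomorphic top part $q_m$ of $f$ into degree-one pieces via Max Noether (after lifting to a nonhyperelliptic normal cover), and then pairs to get $\|q_m\|^2=\langle w, X(u-h_{m-1})\rangle=0$; finite Fourier degree of $u$ then falls out by induction. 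Nothing in your sketch touches the Szeg\"o projector, the wave front set of $\Pi$, the multiplication of distributions, or Max Noether, and those are the ingredients that make the $m\ge 3$ case go through. As written, the proposal has a genuine gap at its central step and would not close without those tools or an entirely new estimate.
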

This is the exact analogue in the Anosov case of the recent result of Paternain-Salo-Uhlmann \cite{PSU3}
for simple domains in dimension $2$, but both the proof and the geometric setting are very different; it 
is clear that the method of \cite{PSU3} uses strongly that the manifold is open and non-trapping.
To prove Theorem \ref{injectm}, we use the existence of invariant distributions with prescribed push-forward for $m=1$, we apply a  Szeg\" o projector in the fibers of $SM$ to these distributions, which allows us to multiply them using their wave front set property. The obtained set of distributions produce a large enough vector space of invariant distributions orthogonal to $\ker I_m\cap \ker D^*$ to force this space being trivial. The properties of our operator $\Pi$ allows to solve the problem encountered in Remark 9.4 of \cite{PSU1}, which was why they had to assume $m\leq 2$.\\

To motivate even more this new way of analyzing X-ray transform, we mention that 
using tools of similar nature, we are able to prove in \cite{Gu} the injectivity of X-ray transform for all negatively curved manifold with strictly convex boundary, and that the scattering map for the flow on negatively curved surfaces determines the Riemannian metric up to conformal diffeomorphism.\\

\noindent \textbf{Ackowledgement.} We thank N. Anantharaman, V. Baladi, S. Dyatlov, F. Naud, G. Paternain, M. Salo, G. Uhlmann, M. Zworski for useful discussions. The research is partially supported by grants ANR-13-BS01-0007-01 and ANR-13-JS01-0006.

\section{The resolvent of the flow and the operator $\Pi$}

In what follows, the manifold $\mc{M}$ will be connected. Before we start, we point out that we shall use 
pseudo-differential operators ($\Psi$DO) and the notion of wave front set of distributions and of $\Psi$DOs. 
We refer to H\"ormander \cite[Chap. VIII]{Ho1} for wave front sets, and to Grigis-Sj\"ostrand \cite{GrSj} for standard pseudo-differential calculus and Zworski \cite{Zw} for the semiclassical version. We shall say that a pseudo-differential operator $A$ on $\mc{M}$ is microsupported in a conic set $U\subset T^*\mc{M}$ if its wave front set is contained in 
$U$, ie. its full symbol $a(y,\xi)$ in local coordinates and all its derivatives vanish to all order as $|\xi|\to \infty$ outside the conic set $U$.  We will write $\Psi^{m}(\mc{M})$ for the class of classical  pseudo-differential 
operators of order $m\in \rr$. We say that $A\in \Psi^m(\mc{M})$ is elliptic at $(x_0,\xi_0)$ if there is a conical neighborhood $U$ of $(x_0,\xi_0)$ and $C>0$ such that 
its principal symbol $\sigma(A)$ satisfies $|\sigma(A)(x,\xi)|/|\xi|^m\geq 1/C$ in $U\cap \{|\xi|>C\}$.

\subsection{The resolvent of the flow}
First, we recall that a smooth vector field $X$ on a compact manifold $\mc{M}$ without boundary is Anosov if its 
flow $\varphi_t$ has the following property: there exists a continuous flow-invariant splitting 
\[ T\mc{M}=\rr X\oplus E_s\oplus E_u\]
where $E_s, E_u$ are the stable/unstable bundles, which are defined as follows: 
there exists $C>0,\nu>0$ such that 
\begin{equation}\label{stabunstab}
\begin{gathered}
\xi \in E_s(y), y\in \mc{M} \iff \|d\varphi_t(y)(\xi)\|\leq Ce^{-\nu t}\|\xi\| \quad \textrm{ for }t\geq 0 \\
\xi \in E_u(y), y\in \mc{M} \iff \|d\varphi_t(y)(\xi)\|\leq Ce^{-\nu |t|}\|\xi\| \quad \textrm{ for }t\leq 0
\end{gathered}\end{equation}
where $\|\cdot\|$ is the norm induced by any fixed metric on $\mc{M}$.
The flow will be said to be a contact Anosov flow if the Anosov form $\alpha$, defined by 
\[ \alpha(X)=1 ,\quad \ker \alpha =E_u\oplus E_s,\]
is a smooth contact form (i.e.\  $d\alpha$ is symplectic on $\ker \alpha$). 
This is for instance the case for Anosov geodesic flows on $\mc{M}=SM$ with 
$(M,g)$ a Riemannian compact manifold, as $\alpha$ is simply the Liouville $1$-form.
Notice that for contact Anosov flow, there is a natural invariant measure given by 
$d\mu=\alpha\wedge (d\alpha)^d$ where $d$ is the dimension of $E_u$ (and $E_s$).
We shall also define the dual stable and unstable bundles $E_s^*\subset T^*\mc{M}$ and $E_u^*\subset T^*\mc{M}$ by 
\begin{equation}\label{Eus*} 
E_u^*(E_u\oplus \rr X)=0, \quad E_s^*(E_s\oplus \rr X)=0.
\end{equation}

When the flow $\varphi_t$ has a smooth invariant measure $d\mu$ on $\mc{M}$, 
the generating vector field can be viewed as an (formally) anti self-adjoint operator on $C^\infty(\mc{M})$, that is 
$\cjg Xu,v\cjd=-\cjg u,Xv\cjd$ for all $u,v\in C^\infty(\mc{M})$ where the pairing is the $L^2(\mc{M})$ pairing using the invariant measure $d\mu$. By Stone's theorem, the generator $-iX$ of the unitary operator 
\[e^{tX}:L^2(\mc{M})\to L^2(\mc{M}), \quad (e^{tX}f)(y)=f(\varphi_t(y))\]
is self-adjoint on $L^2(\mc{M})=L^2(\mc{M},d\mu)$.
In this case, the spectral theorem for self-adjoint operators tells us that ${\rm Spec}_{L^2}(-iX)\subset \rr$ .
The resolvents $R_-(\la)=(-X-\la)^{-1}$ and $R_+(\la)=(-X+\la)^{-1}$ are well-defined on $L^2(\mc{M})$ for ${\rm Re}(\la)>0$  by 
\begin{equation}\label{R_+R_-}
\begin{gathered} 
R_+(\la)f(y)=\int_0^\infty e^{-\la t}f(\varphi_t(y))dt, \quad 
R_-(\la)f(y)=-\int_{-\infty}^0 e^{\la t}f(\varphi_t(y))dt.
\end{gathered}
\end{equation}
Moreover, Stone's formula gives the spectral measure of $-iX$ in terms of the resolvents $R_\pm(i\la)$ by the strong limit  
\begin{equation}\label{Stone} 
\demi(1_{[a,b]}(-iX)+1_{(a,b)}(-iX))=(2\pi)^{-1}\lim_{\eps\to 0^+}\int_{a}^b(R_+(i\la+\eps)-R_-(-i\la+\eps))d\la
\end{equation}
for $a,b\in\rr$.
Then we recall the result of Faure-Sj\"ostrand \cite[Th. 1.4]{FaSj}:
\begin{theo}[Faure-Sj\"ostrand]\label{fauresjos}
Assume that $X$ is a smooth vector field generating an Anosov flow and let $d\mu$ be a smooth invariant measure.
 There exists $c>0$ such that for all $s>0$ and $r<0$, there is a Hilbert space $\mc{H}^{r,s}$ such that $H^{s}(\mc{M})\subset \mc{H}^{r,s}\subset H^{r}(\mc{M})$ and 
$-X-\la$ is Fredholm with index $0$ as an operator 
\[-X-\la : {\rm Dom}(X)\cap\mc{H}^{r,s}\to \mc{H}^{r,s}, \quad  \textrm{ if }{\rm Re}(\la)> -c\min(|r|,s)\] 
depending analytically on $\la$. Moreover $-X-\la$ is invertible for ${\rm Re}(\la)$ large enough on these spaces, the inverse coincides with $R_-(\la)$ when acting on $H^{s}(\mc{M})$ and it extends meromorphically to the half-plane ${\rm Re}(\la)> -c\min(|r|,s)$, with poles of finite multiplicity as a bounded operator on $\mc{H}^{r,s}$.
\end{theo}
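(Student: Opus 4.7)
The plan follows the anisotropic Sobolev space / escape function paradigm for hyperbolic dynamics. The first step is to construct a microlocal order function $m\in C^\infty(T^*\mc{M})$, homogeneous of degree $0$ outside a compact set, with $m\equiv s$ on a small conic neighborhood of $E_u^*$, $m\equiv -|r|$ on a small conic neighborhood of $E_s^*$, and monotonically non-increasing along the Hamilton flow of the symbol $p(y,\xi):=\xi(X(y))$ of $-iX$. Setting $G(y,\xi):=m(y,\xi)\log\langle\xi\rangle$, the crucial quantitative arrangement is the escape inequality
\[ H_p G \leq -c\min(|r|,s)\quad \text{outside a conic neighborhood of } E_u^*\cup E_s^*,\qquad H_p G\leq 0 \text{ globally}. \]
Existence of such $G$ rests on the Anosov hyperbolicity: on $T^*\mc{M}\setminus 0$, the set $E_u^*\cup E_s^*$ is the only trapped direction of the Hamilton flow of $p$, and \eqref{stabunstab} provides uniform exponential rates of attraction/repulsion.

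Given $G$, define the anisotropic space $\mc{H}^{r,s}:=A_G^{-1}L^2(\mc{M})$, where $A_G:=\Op(e^G)$ is an elliptic $\Psi$DO of variable order $m$; the inclusion $H^s(\mc{M})\subset \mc{H}^{r,s}\subset H^r(\mc{M})$ is immediate from $r\leq m\leq s$. Conjugation gives $P_\lambda:=A_G(-X-\lambda)A_G^{-1}$ acting on $L^2$, with
\[ P_\lambda = -X-\lambda - \Op(H_pG) + R \]
for $R$ of lower order. Since $-X$ is anti-self-adjoint with respect to $d\mu$, sharp G\aa rding applied to the sign of $H_pG$, combined with an elliptic parametrix in a small conic neighborhood of $E_u^*\cup E_s^*$ (where $p$ is elliptic off the characteristic cone and $G$ is locally constant), yields the a priori estimate
\[ \|u\|_{\mc{H}^{r,s}}\leq C\|(-X-\lambda)u\|_{\mc{H}^{r,s}} + C\|u\|_{H^{-N}}, \qquad \Re\lambda > -c\min(|r|,s), \]
for any $N$. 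Compactness of $\mc{H}^{r,s}\hookrightarrow H^{-N}$ gives the semi-Fredholm property, the adjoint estimate upgrades this to Fredholm, and the index is locally constant in $\lambda$. Since the explicit resolvent $R_-(\lambda)$ of \eqref{R_+R_-} provides a bounded inverse on $H^s(\mc{M})\subset \mc{H}^{r,s}$ for $\Re\lambda \gg 0$, the index equals zero throughout the Fredholm half-plane. Analytic Fredholm theory in $\lambda$ produces the meromorphic extension with finite-rank poles, and agreement with $R_-(\lambda)$ on $H^s(\mc{M})$ for large $\Re\lambda$ is automatic from uniqueness of the inverse.

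The main obstacle is building the escape function $G$ with the sharp threshold $c\min(|r|,s)$. The topological placement of $m$ is elementary because $E_u^*$ and $E_s^*$ are disjoint in $S^*\mc{M}$; what is delicate is that these dual bundles are only H\"older continuous, so the natural dynamical construction (integrating a bump along orbits of $H_p$) must be smoothed and truncated without destroying the pointwise decay rate, and the constant $c$ must be tracked from the Anosov contraction/expansion rates of \eqref{stabunstab}. Once this escape function is secured, the rest is standard microlocal bookkeeping.
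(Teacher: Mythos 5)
This is Theorem 1.4 of Faure--Sj\"ostrand, which the paper explicitly cites rather than proves; the only original content in the surrounding text is the remark that the inverse on $\mc{H}^{r,s}$ agrees with $R_-(\la)$ on $H^s$ for large $\Re\la$, checked from formula \eqref{R_+R_-}. Your sketch correctly reproduces the Faure--Sj\"ostrand program in outline: escape function $G=m\log\langle\xi\rangle$ with $m$ interpolating between $r$ and $s$, conjugation $A_G(-X-\la)A_G^{-1}$, sharp G\aa rding plus elliptic parametrix off the characteristic set, compact embedding giving semi-Fredholmness, and index $0$ by continuity back to $\Re\la\gg 0$ where $R_-(\la)$ is an honest inverse.

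There is, however, a genuine sign mistake in the placement of the order function. You set $m\equiv s$ near $E_u^*$ and $m\equiv -|r|$ near $E_s^*$, while also requiring $m$ to be non-increasing along the Hamilton flow of $p$. These two conditions are incompatible: since $\Phi_t(y,\xi)/|\Phi_t(y,\xi)|\to E_0^*\oplus E_u^*$ as $t\to+\infty$ for $\xi\notin E_s^*$ (as recalled in Section~2 of the paper), the forward flow attracts to $E_u^*$, so a non-increasing $m$ must take its \emph{minimum} near $E_u^*$. Concretely, $A_G(-X)A_G^{-1}\approx -X+\Op(H_pG)+\text{l.o.t.}$, so to obtain the injectivity estimate $\Re\langle A_G(-X-\la)A_G^{-1}u,u\rangle\lesssim -(\Re\la+c\min(|r|,s))\|u\|^2$ for $\Re\la>-c\min(|r|,s)$ one needs $H_pG\lesssim -c\min(|r|,s)$ off $E_u^*\cup E_s^*$, i.e.\ $H_pm\leq 0$, i.e.\ $m=r$ near the forward attractor $E_u^*$ and $m=s$ near the repeller $E_s^*$. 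This is indeed the convention stated in the paper (``$m=s$ in a small conic neighborhood of $E_s^*$ and $m=r$ in a small conic neighborhood of $E_u^*$''). With your placement the commutator term has the wrong sign and the a priori estimate fails; the rest of the argument is sound once this is corrected.
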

The assumption about the invariant smooth measure $d\mu$ is not necessary in \cite{FaSj}, but without it, the Fredholm property 
is true only in ${\rm Re}(\la)>-c\min(|r|,s)+\la_0$ for some $\la_0$ so that $-X-\la$ be invertible on 
$L^2(\mc{M})$ (defined with respect to some fixed smooth measure) for ${\rm Re}(\la)>\la_0$. 
If the flow is contact, there is an invariant smooth measure making $X$ anti-self adjoint 
and thus one can take $\la_0=0$, with inverse given by  \eqref{R_+R_-}. For what follows, we shall assume, without loss of generality, that the invariant measure is a probability measure.

The fact that the inverse $(-X- \la)^{-1}$ for ${\rm Re}(\la)$ large coincides with $R_- (\la)$ when acting on $H^s(\mc{M})$ is not explicitly stated in \cite{FaSj} but it is straightforward to check, since by formula \eqref{R_+R_-}, $R_-(\la)$ maps $H^s(\mc{M})$ to itself for ${\rm Re}(\la)>C|s|$ for some $C>0$ depending only on the Lyapunov exponents of $\varphi_t$ (see \cite[Prop. 3.2]{DyZw1} for details).
As operator mapping $C^\infty(\mc{M})\to C^{-\infty}(\mc{M})$, the resolvent $R_-(\la)$ does not depend on $r,s$;
the Schwartz kernel of $R_-(\la)$ admits a meromorphic extension to $\cc$ as an element in $C^{-\infty}(\mc{M}\x\mc{M})$. 
Since moreover $(-X-\la)^*=(X-\bbar{\la})=-(-X+\bbar{\la})$ on $C^\infty(\mc{M})$, we have that for ${\rm Re}(\la)>0$ 
\begin{equation}\label{adjoint} 
R_-(\bbar{\la})^*= - R_+(\la)  \textrm{ on }L^2(\mc{M}). 
\end{equation}

\textbf{The anisotropic Sobolev spaces.} In \cite[Sec. 1.1.2]{FaSj}, the spaces $\mc{H}^{r,s}$ are defined by $\mc{H}^{r,s}:=\hat{A}_{r,s}^{-1}(L^2(\mc{M}))$ where $\hat{A}_{r,s}={\rm Op}(A_{r,s})$ is an invertible
pseudo-differential operator in an anisotropic class which quantizes a symbol function 
$A_{r,s}\in C^\infty(T^*\mc{M})$. We recall the construction of $A_{r,s}$ following \cite{FaSj}.
The symbol is defined by $A_{r,s}=\exp(G_m)$ where $G_m$ is a smooth function on $T^*\mc{M}$ constructed in Lemma 1.2 of \cite{FaSj}: $G_m(y,\xi):=m(y,\xi)\log(\sqrt{1+f(y,\xi)^2})$ with $m$ and $f$ smooth, homogeneous in $|\xi|>1$ with respective degree $0$ and $1$, $f$ is positive and does not depend on $r,s$. 
Fix $\eps>0$ small, the function $m$ is given in $\{|\xi|>1\}$ by\footnote{We choose $n_0:=(1-\eps)s$ in the notation of \cite[Lemma 1.2]{FaSj}.}
$m=s(1-m_2-\eps (m_1-m_2))+rm_2$ where $m_1\in C^\infty(S^*\mc{M};[0,1])$ is such that  $m_1^{-1}(0)$ is a small conic neighborhood of $E_s^*$ and $m_1^{-1}(1)$ is a small conic neighborhood of $E_u^*\oplus E_0^*$, 
and $m_2\in C^\infty(S^*\mc{M};[0,1])$ is such that $m_2^{-1}(1)$ is a small conic neighborhood of $E_u^*$ and $m_2^{-1}(0)$ is a small conic neighborhood of $E_s^*\oplus E_0^*$. The conic neighborhoods can be taken as small as we like in the construction. From the construction, we have  
\begin{equation}\label{nested}
H^s(\mc{M}) \subset \mc{H}^{r,s}\subset \mc{H}^{r,s'}\subset \mc{H}^{r',s'}\subset H^{r'}(\mc{M})
\end{equation}
if $r'\leq r <0<s'\leq s$. We also see that $m=s$ in a small conic neighborhood of $E_s^*$ and 
$m=r$ in a small conic neighborhood of $E_u^*$, which implies that the space $\mc{H}^{r,s}$ is microlocally equivalent 
to $H^s(\mc{M})$ in a small conic open neighborhood $V_s$ of $E_s^*$ in the following sense:  
if $B\in \Psi^0(\mc{M})$ has wave front set contained in $V_s$, then there is $C>0$ such that for all $f\in C^\infty(\mc{M})$
\[ ||Bf||_{H^s(\mc{M})}\leq C||f||_{\mc{H}^{r,s}},\quad ||Bf||_{\mc{H}^{r,s}}\leq C||f||_{H^s(\mc{M})}.\]
If we choose $\eps>0$ small, we get that 
in the region $W_s^\eps:=\{m_2\leq 1-2\eps\}$, $m\geq s\eps+r$. Thus for any  $B\in \Psi^0(\mc{M})$ with wave front set contained in $W_s^\eps$, 
\begin{equation}\label{fBf}
f\in \mc{H}^{r,s} \Longrightarrow Bf\in H^{\eps s+r}(\mc{M}).
\end{equation}
The complement of $W_s^\eps$ is a small conic neighborhood of $E_u^*$ if $\eps$ is small, which means that 
if we choose $s$ and $r$ so that $s\eps+r>0$, functions in $\mc{H}^{r,s}$ are microlocally in a positive Sobolev space outside a small conic neighborhood of $E_u^*$, which can be made as small as we want in the construction.
The dual space $(\mc{H}^{r,s})^*$ to $\mc{H}^{r,s}$ (with respect to $L^2$-pairing)
 is identified with $\hat{A}_{r,s}(L^2(\mc{M}))$ and the symbol of $\hat{A}_{r,s}^{-1}$ being $\exp(-G_m)$, 
 this space is microlocally equivalent to $H^{-r}(\mc{M})$
in an  small conic neighborhood of $E_u^*$ and is microlocally equivalent to $H^{-s}(\mc{M})$ in a small conic neighborhood of $E_s^*$. 

Considering the flow in backward time, which amounts to consider the generator $-X$ instead of $X$, the roles of 
$E_s$ and $E_u$ are exchanged and we can define the space $\mc{H}^{s,r}$ for $r<0<s$ just as $\mc{H}^{r,s}$ but exchanging $E_u^*$ and $E_s^*$: $\mc{H}^{s,r}$ is microlocally equivalent to $H^s(\mc{M})$ in a small conic neighborhood of $E_u^*$, and is microlocally equivalent to $H^r(\mc{M})$ in a small conic neighborhood of $E_s^*$. Like \eqref{nested}, we have for $r'<r<0<s'<s$
 \begin{equation}\label{nested2}
H^s(\mc{M}) \subset \mc{H}^{s,r}\subset \mc{H}^{s',r}\subset \mc{H}^{s',r'}\subset H^{r'}(\mc{M}).
\end{equation}
We then deduce from Theorem \ref{fauresjos} applied with $-X$ that 
 $X-\la$ is Fredholm with index $0$ as an operator 
\[X-\la : {\rm Dom}(X)\cap\mc{H}^{s,r}\to \mc{H}^{s,r}, \quad  \textrm{ if }{\rm Re}(\la)> -c\min(|r|,s)\] 
depending analytically on $\la$. Moreover $X-\la$ is invertible for ${\rm Re}(\la)$ large enough on these spaces, the inverse coincides with $-R_+(\la)$ when acting on $H^{s}(\mc{M})$ and it extends meromorphically to the half-plane ${\rm Re}(\la)> -c\min(|r|,s)$, with poles of finite multiplicity as a bounded operator 
\[ R_+(\la): \mc{H}^{s,r}\to \mc{H}^{s,r}.\]
Notice that the formula \eqref{adjoint}  relating $R_+(\la)$ with $R_-(\la)$ then extends meromorphically to the half-plane ${\rm Re}(\la)>-c\min(|r|,s)$ as an operator  $H^s(\mc{M})\to H^{-s}(\mc{M})$, and shows that 
\begin{equation}\label{dual}
R_-(\la): (\mc{H}^{s,r})^*\to (\mc{H}^{s,r})^*, \quad R_+(\la): (\mc{H}^{r,s})^*\to (\mc{H}^{r,s})^*
\end{equation}
are bounded for $\la$ in the same half-plane.
As above in \eqref{fBf}, there is a small conic neighborhood (that can be made as small as we like) with complement $W_u^\eps$ such that for any $B\in \Psi^0(\mc{M})$ with wave front set contained in $W_u^\eps$, 
\begin{equation}\label{fBf2}
f\in \mc{H}^{s,r} \Longrightarrow Bf\in H^{\eps s+r}(\mc{M}).
\end{equation}
By choosing $\eps>0$ small enough so that  
$W_u^\eps\cup W_s^\eps$ contains $\{|\xi|\geq 1\}$, we deduce from \eqref{fBf} and \eqref{fBf2}, the following regularity statement:
\begin{lemm}\label{regul}
For $s>0$ and $r<0$, we have $\mc{H}^{r,s}\cap \mc{H}^{s,r}\subset H^{s\eps+r}(\mc{M})$ for some 
$\eps>0$ independent of $s,r$.
\end{lemm}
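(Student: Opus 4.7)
The plan is to prove Lemma \ref{regul} by a microlocal partition-of-unity argument, using the two regularity statements \eqref{fBf} and \eqref{fBf2} that have already been set up for $\mc{H}^{r,s}$ and $\mc{H}^{s,r}$ respectively.

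First I would fix $\eps > 0$ small enough that the two ``bad'' conic regions --- the small conic neighborhood of $E_u^*$ which is the complement of $W_s^\eps$, and the small conic neighborhood of $E_s^*$ which is the complement of $W_u^\eps$ --- are disjoint. This is possible because $E_u^* \cap E_s^* = \emptyset$ (they are defined by annihilation conditions on complementary subbundles not containing $\rr X$), and because the conic neighborhoods in the construction of $m_1, m_2$ can be chosen as tight as we like around $E_s^*, E_u^*$. Once the two complementary neighborhoods are disjoint, their complements $W_s^\eps$ and $W_u^\eps$ jointly cover $\{|\xi| \geq 1\}$ in $T^*\mc{M}$.

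Next I would construct a pseudo-differential partition of the identity adapted to this covering: choose $B_1, B_2 \in \Psi^0(\mc{M})$ and $\chi_0 \in \Psi^{-\infty}(\mc{M})$ such that $I = \chi_0 + B_1 + B_2$, with $\WF(B_1) \subset W_s^\eps$ and $\WF(B_2) \subset W_u^\eps$. Such a decomposition is standard: take a smooth conic partition of unity on $S^*\mc{M}$ subordinate to $\{W_s^\eps, W_u^\eps\}$, quantize it to get $B_1, B_2$, and absorb the low-frequency remainder into $\chi_0$.

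Now for any $f \in \mc{H}^{r,s} \cap \mc{H}^{s,r}$, I split $f = \chi_0 f + B_1 f + B_2 f$. The term $\chi_0 f$ is smooth, hence in every Sobolev space. Since $f \in \mc{H}^{r,s}$ and $\WF(B_1) \subset W_s^\eps$, estimate \eqref{fBf} gives $B_1 f \in H^{s\eps + r}(\mc{M})$. Symmetrically, since $f \in \mc{H}^{s,r}$ and $\WF(B_2) \subset W_u^\eps$, estimate \eqref{fBf2} gives $B_2 f \in H^{s\eps + r}(\mc{M})$. Summing, $f \in H^{s\eps + r}(\mc{M})$, which is the claim.

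The argument is essentially a bookkeeping exercise; the only mildly delicate point is to verify that $\eps > 0$ can indeed be chosen independently of $s$ and $r$. This is clear from the construction: the conic neighborhoods entering the definitions of $W_s^\eps$ and $W_u^\eps$ depend only on the Anosov splitting and on the fixed cutoffs $m_1, m_2$, not on the parameters $s, r$ which only scale the amplitude $m$. So a single threshold $\eps$ works uniformly.
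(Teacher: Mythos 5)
Your proof is correct and matches the paper's intended argument: the paper simply asserts that Lemma \ref{regul} follows from \eqref{fBf} and \eqref{fBf2} once $\eps$ is chosen so that $W_s^\eps \cup W_u^\eps \supset \{|\xi|\geq 1\}$, and your microlocal partition of unity $I = \chi_0 + B_1 + B_2$ with $\WF(B_1)\subset W_s^\eps$, $\WF(B_2)\subset W_u^\eps$ is exactly the bookkeeping the paper leaves implicit. Your justification of the uniformity of $\eps$ in $s,r$ (it depends only on the Anosov splitting and the fixed cutoffs $m_1,m_2$) is also the right observation.
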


\textbf{Wave front set of the resolvent.} The wave front set of the Schwartz kernel of the resolvent is analyzed by Dyatlov-Zworski \cite[Prop. 3.3]{DyZw1}.
\begin{prop}[Dyatlov-Zworski]\label{dyatlovzworski}
Let $\la_0\in \cc$, and assume that the meromorphically extended resolvent $R_-(\la)$ has a pole of order $k$ at $\la_0$ with Laurent expansion
\[R_-(\la)=R_-^{\rm hol}(\la) +\sum_{j=1}^k \frac{A_j}{(\la-\la_0)^j}\] 
where $R_-^{\rm hol}(\la)$ is holomorphic near $\la_0$, then the wave front set of the Schwartz kernel of $R_-^{\rm hol}(\la)$ satisfies
\[ {\rm WF}(R_-^{\rm hol}(\la))\subset N^*\Delta(\mc{M}\x\mc{M})\cup \Omega_+\cup (E_u^*\x E_s^*).\] 
with $N^*\Delta(\mc{M}\x\mc{M})$ the conormal bundle to the diagonal $\Delta(\mc{M}\x\mc{M})$ of $\mc{M}\x\mc{M}$ and 
\[\Omega_+:= \{(\Phi_t(y,\xi),y,-\xi)\in T^*(\mc{M}\x\mc{M});  \,\, t\geq 0\,\, \xi(X(y))=0\}.\]
where $\Phi_t$ is the symplectic lift of $\varphi_t$ on $T^*\mc{M}$, or equivalently the Hamilton flow of the Hamiltonian $p(y,\xi)=\xi(X(y))$.
A similar result holds for $R_+(\la)$, where the wave front set of the regular part $R_+^{\rm hol}(\la)$ satisfies
\[ {\rm WF}(R_+^{\rm hol}(\la))\subset N^*\Delta(\mc{M}\x\mc{M})\cup \Omega_-\cup (E_s^*\x E_u^*)\] 
where $\Omega_-$ is defined like $\Omega_+$ but with $\Phi_{-t}$ instead of $\Phi_t$.
\end{prop}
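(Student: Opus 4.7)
The plan is to analyze the Schwartz kernel $K_\la(y,y')$ of $R_-(\la)$ using the defining identity $(-X_y-\la)K_\la=\delta_\Delta$, combining a direct wave front computation in the regime where the integral formula \eqref{R_+R_-} converges with microlocal propagation and radial-point estimates on the meromorphic continuation. For $\Re(\la)$ large, $K_\la(y,y')=-\int_0^\infty e^{-\la t}\delta(y'-\varphi_{-t}(y))\,dt$ is the push-forward under $(t,y,y')\mapsto (y,y')$ of the distribution $-e^{-\la t}\delta(y'-\varphi_{-t}(y))$ on $[0,\infty)\times\mc{M}\times\mc{M}$. The wave front set of $\delta(y'-\varphi_{-t}(y))$ is the conormal bundle to the graph $\{y'=\varphi_{-t}(y)\}$, and applying H\"ormander's push-forward rule \cite[Thm.~8.2.12]{Ho1} yields $\WF(K_\la)\subset N^*\Delta\cup \Omega_+$ directly: the boundary contribution at $t=0$ produces $N^*\Delta$, while interior $t>0$ contributions impose both the characteristic condition $\eta(X(y'))=0$ (coming from the vanishing of the $t$-cotangent under push-forward) and the flow relation $(y,\xi)=\Phi_t(y',-\eta)$, which is exactly the defining condition of $\Omega_+$.

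To propagate the bound to arbitrary $\la$ in the meromorphic half-plane of Theorem~\ref{fauresjos}, I would argue microlocally from the equation $(-X_y-\la)K_\la=\delta_\Delta$. Off the zero section in the second factor the operator is of real principal type in $y$ with characteristic variety $\{\xi(X(y))=0\}$ and Hamilton flow $\Phi_t$ on the first factor (trivial on the second), so H\"ormander's propagation theorem confines $\WF(K_\la)\setminus N^*\Delta$ to that characteristic set and makes it $\Phi_t$-invariant in the first factor; one-sided selection of $\Omega_+$ rather than $\Omega_-$ is fixed by consistency with the outgoing limit from Step~1 (and can be tracked via the sign of $\Im(\la)$ in the subprincipal symbol of $i(-X-\la)$). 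The main obstacle is the analysis near the radial sets $E_u^*$ and $E_s^*$, which are respectively the source and sink of the projectivized flow $\Phi_t$ on $T^*\mc{M}$ and where H\"ormander's propagation theorem breaks down. This is handled by the radial-point estimates of Melrose as adapted to this setting by Faure-Sj\"ostrand, which are exactly the estimates underlying the construction of $\mc{H}^{r,s}$: applied on the first factor with the outgoing convention, they confine any residual wave front accumulation to $E_u^*$ in $y$; applied on the second factor via the adjoint identity $R_-(\bbar{\la})^*=-R_+(\la)$ together with the microlocal structure \eqref{dual} of $(\mc{H}^{r,s})^*$, they confine it to $E_s^*$ in $y'$. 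The combination produces the $E_u^*\times E_s^*$ component.

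Finally, to pass from $R_-(\la)$ to $R_-^{\rm hol}(\la)$ at a pole $\la_0$, observe that the Laurent coefficients $A_j$ are finite-rank operators whose ranges sit in the generalized kernel of $-X-\la_0$ inside $\mc{H}^{r,s}$. By the microlocal equivalence of $\mc{H}^{r,s}$ with $H^s(\mc{M})$ away from $E_u^*$ recalled in \eqref{fBf}, such generalized eigendistributions have wave front set contained in $E_u^*$; dually, the associated left eigenfunctionals have wave front in $E_s^*$, so each $A_j$ has Schwartz kernel microsupported in $E_u^*\times E_s^*$, which already lies inside the target set. Subtracting the principal part therefore preserves the wave front bound, giving the claim for $R_-^{\rm hol}$. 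The statement for $R_+^{\rm hol}$ is obtained by the same argument applied to $-X$ in place of $X$, which exchanges the roles of $E_u^*$ and $E_s^*$ and replaces $\Phi_t$ by $\Phi_{-t}$.
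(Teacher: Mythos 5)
The paper does not prove this proposition; it quotes it from Dyatlov--Zworski \cite[Prop.~3.3]{DyZw1}, whose argument in turn follows Vasy \cite{Va}. Your sketch is a faithful reconstruction of that microlocal route and the three ingredients you identify (direct computation of the pushforward for $\Re\la\gg 1$, real-principal-type propagation together with radial estimates at $E_u^*$, $E_s^*$, and finite-rank correction at poles) are exactly the ones used there. Two places, however, need to be tightened before the sketch is a proof.

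First, in Step 1 the H\"ormander pushforward theorem \cite[Thm.~8.2.12]{Ho1} applies to proper projections, while the fiber $t\in[0,\infty)$ is non-compact and, more importantly, the conormal to $\{y'=\varphi_{-t}(y)\}$ is not uniformly bounded in the cosphere bundle as $t\to\infty$: along the unstable direction the first-slot covector grows exponentially, and the limiting directions lie precisely in $E_u^*\times E_s^*$. So the direct computation already yields $N^*\Delta\cup\overline{\Omega_+}\cup(E_u^*\times E_s^*)$ rather than just $N^*\Delta\cup\Omega_+$, even for $\Re\la$ large; one must either truncate $\int_0^T$ and treat the tail $e^{-\la T}e^{-TX}R_-(\la)$ separately, or invoke closedness of the wave front set and identify the limit points.

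Second, in Step 2 the mechanism you propose for the one-sidedness --- tracking the sign of $\Im(\la)$ (presumably you mean $\Re(\la)$) in the subprincipal symbol of $i(-X-\la)$ --- breaks down across the meromorphic continuation, since the relevant sign changes as $\Re\la$ crosses $0$ and the operator is not in a limiting-absorption regime there. The outgoing selection of $\Omega_+$ (and of $E_u^*$ as radial sink versus $E_s^*$ as source) is built into the choice of escape function defining $\mc{H}^{r,s}$, not into $\la$; what Dyatlov--Zworski actually do is combine the finite-time computation above with uniform (in $\la$) a priori estimates in the anisotropic spaces, converting bounded mapping $\mc{H}^{r,s}\to\mc{H}^{r,s}$ with $r<0<s$ into a wave front bound on the kernel, rather than applying H\"ormander propagation abstractly to the equation $(-X_y-\la)K_\la=\delta_\Delta$. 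This also sidesteps the degeneracy of the $y$-equation at $\{\xi=0,\ \eta\neq0\}$, which your propagation step leaves uncontrolled unless the adjoint trick is implemented with some care. Your Step 3 is fine, provided you add that the generalized eigenspace at $\la_0$ is independent of the choice of $(r,s)$, so one may let $s\to\infty$ and deduce ${\rm WF}\subset E_u^*$ for the resonant states (and dually $E_s^*$ for the coresonant ones).
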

We recall that the symplectic lift $\Phi_t$ acts by $\Phi_t(y,\xi)=(\varphi_t(y),(d\varphi_t^{-1}(y))^T\xi)$.

\textbf{Poles on the critical line}. To end this section, we describe the poles of $R_\pm(\la)$ on the imaginary line.
\begin{lemm}\label{poles}
The resolvent $R_\pm(\la)$ have poles of order at most $1$ on the line $i\rr$, and 
we have for all $\la_0\in \rr$
\[ \demi(({\rm Res}_{\la_0}R_+(\la))+({\rm Res}_{\la_0}R_+(\la))^*)=\Pi_{{\rm Eig}(\la_0)}\]
where $\Pi_{{\rm Eig}(\la_0)}$ is the orthogonal projector on the eigenspace ${\rm Eig}(\la_0)=\ker_{L^2}(-iX-\la_0)$.
\end{lemm}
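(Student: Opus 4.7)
The plan is to combine the self-adjointness of $A:=-iX$ on $L^2(\mc{M},d\mu)$ with the Faure--Sj\"ostrand meromorphic structure of Theorem~\ref{fauresjos}. Writing $R_+(\la)=(\la-iA)^{-1}$ and $R_-(\la)=-(\la+iA)^{-1}$, the spectral theorem yields the operator-norm bound
\[\|R_\pm(\la)\|_{L^2\to L^2}\;\le\;\frac{1}{\Re(\la)},\qquad \Re(\la)>0.\]

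For the pole-order claim, fix $\la_0\in i\rr$ and test functions $f,g\in C^\infty(\mc{M})$. By Theorem~\ref{fauresjos} the scalar function $\la\mapsto\cjg R_+(\la)f,g\cjd$ is meromorphic in a neighborhood of $\la_0$, and it coincides with its $L^2$ avatar on $\{\Re(\la)>0\}$, where it is bounded by $C(f,g)/\Re(\la)$. Evaluating at $\la=\la_0+\epsi$ for real $\epsi>0$, a Laurent pole of order $k\ge 2$ would produce a leading asymptotic $\asymp\epsi^{-k}$, incompatible with the $\epsi^{-1}$ upper bound, forcing $k\le 1$. The same reasoning applies to $R_-$.

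To identify the residue, write $\la_0=i\mu_0$ with $\mu_0\in\rr$. The spectral theorem and dominated convergence give
\[\epsi\,R_+(\la_0+\epsi)=\int_{\rr}\frac{\epsi\,dE_A(\nu)}{\epsi+i(\mu_0-\nu)}\;\xrightarrow[\epsi\to 0^+]{}\;E_A(\{\mu_0\})=\Pi_{\Eig(\mu_0)}\]
in the strong operator topology on $L^2$, since the integrand is bounded by $1$ and converges pointwise to $\mathbf{1}_{\{\mu_0\}}$. The meromorphic identity principle, combined with agreement of the Faure--Sj\"ostrand extension and $R_+^{L^2}$ on $\{\Re(\la)>0\}$, then forces the residue $A_1:=\Res_{\la_0}R_+$ to equal $\Pi_{\Eig(\mu_0)}$ on $C^\infty(\mc{M})$. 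Self-adjointness of the orthogonal projector gives $A_1^*=A_1$ on $C^\infty$, whence $\demi(A_1+A_1^*)=\Pi_{\Eig(\mu_0)}$, the desired identity. As a complementary route, one can apply Stone's formula~\eqref{Stone} on a small interval around $\mu_0$ together with the Plemelj computation $\lim_{\epsi\to 0^+}\int(\pm i(\nu-\mu_0)+\epsi)^{-1}d\nu=\pi$, using the adjoint relation~\eqref{adjoint} in the form $R_-(\la)=-R_+(\bar\la)^*$ to deduce $\Res_{-\la_0}R_-=-A_1^*$; the symmetric combination then emerges immediately and fixes the sign.

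The main subtle point is the transfer between the $L^2$ picture (where the spectral bound and spectral projector live) and the anisotropic $\mc{H}^{s,r}$ picture (where $R_+$ is meromorphic and where $A_1$ a priori lives). This is handled by testing against $C^\infty(\mc{M})$, which is dense in every space in sight, and invoking the identity principle for operator-valued meromorphic functions.
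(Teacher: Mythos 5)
Your proof is correct, but it takes a route that differs from the paper's in the residue-identification step, and it in fact yields a cleaner conclusion than the stated lemma.

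The paper proves the pole-order bound exactly as you do (bounding the matrix coefficients by $C/\Re(\la)$ via the spectral theorem), but it then identifies the residue only through Stone's formula \eqref{Stone}: writing the Laurent expansions of $R_+(i\la+\eps)$ and $-R_-(-i\la+\eps)=R_+(i\la+\eps)^*$ near $\la_0$, integrating over a shrinking interval, and using $\int_{-\delta}^\delta (i\la'+\eps)^{-1}d\la'\to\pi$ to extract $\demi(A_0+A_0^*)$. Stone's formula intrinsically symmetrizes, so this argument only controls the Hermitian part of the residue, which is precisely what the lemma states. Your primary argument instead computes $\eps R_+(i\la_0+\eps)$ directly: dominated convergence for the spectral integral shows it converges strongly on $L^2$ to $E_A(\{\la_0\})=\Pi_{\Eig(\la_0)}$, while the order-one Laurent expansion on $\mc H^{s,r}$ (with holomorphic part bounded near the pole) shows it converges to $A_0:=\Res_{i\la_0}R_+$ when tested against $C^\infty(\mc M)$; since the two resolvents agree on $C^\infty$ for $\Re(\la)>0$ and $C^\infty$ is dense, you get $A_0=\Pi_{\Eig(\la_0)}$ outright. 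This is \emph{stronger} than the lemma: it shows the residue itself is already the orthogonal projector and hence self-adjoint, not merely that its Hermitian part is. (Indeed, the paper's Remark after Corollary 2.10 leaves self-adjointness of $\Res_0R_+$ as a hypothesis; your dominated-convergence argument disposes of it.) Your closing sketch of the Stone-formula/Plemelj route is precisely the paper's argument, so you have both. The only thing I would tighten is the appeal to the "meromorphic identity principle": replace it by the explicit statement that the $L^2$ resolvent and the Faure--Sj\"ostrand extension agree on $C^\infty(\mc M)$ for $\Re(\la)>0$ (both are holomorphic there, agree for $\Re(\la)$ large via the integral formula \eqref{R_+R_-}, and the matrix coefficients $\cjg R_+(\la)f,g\cjd$ determine the operator by density), so the limit along $\eps\to 0^+$ of $\eps R_+(i\la_0+\eps)$ is unambiguous.
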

\begin{proof} First we notice that from the spectral theorem, if $i\la_0\in i\rr$ is a pole of $R_\pm(\la)$, it is a first order pole since there is $C>0$ such that for all $u,v\in C^\infty(\mc{M})$ 
\[ |\cjg R_\pm(\la)u,v\cjd|\leq C|\la-i\la_0|^{-1}||u||_{L^2}||v||_{L^2} \quad \textrm{ if }{\rm Re}(\la)>0.\]
We write the Laurent expansion of $R_\pm(\la)$ at $i\la_0$ using \eqref{adjoint}: for $\la,\la_0\in\rr$ and $\eps>0$ 
\[ R_+(i\la+\eps)=\frac{A_0}{i\la+\eps-i\la_0}+\mc{O}(1), \quad -R_-(-i\la+\eps)=R_+(i\la+\eps)^*=\frac{A_0^*}{-i\la+\eps+i\la_0}+\mc{O}(1)
\]
for some operator $A_0$ as $\la\to \la_0$ (the $\mc{O}(1)$ is in the weak sense when applying the identity to 
$f\in C^\infty(\mc{M})$ and pairing with $\psi\in C^\infty(\mc{M})$). 
Thus we get from \eqref{Stone}
\[\begin{gathered}
\demi(1_{[\la_0-\delta,\la_0+\delta]}(-iX)+1_{(\la_0-\delta,\la_0+\delta)}(-iX))=\\
\frac{A_0+A_0^*}{2\pi} \lim_{\eps\to 0^+}
\int_{-\delta}^\delta \frac{1}{i\la+\eps}d\la+\mc{O}(\delta)=\demi(A_0+A_0^*)+\mc{O}(\delta)
\end{gathered}\]
where the $\mc{O}(\delta)$ was independent of $\eps$ (and as above is in weak sense). 
Then letting $\delta\to 0$ we get the result.
\end{proof}

\subsection{Mixing}
We say that a flow $\varphi_t$ is mixing with respect to an invariant probability measure $d\mu$ if for all $u,v\in L^2(\mc{M})$  
\[ C_t(u,v):=\int_{\mc{M}}u(\varphi_t(y)) v(y) d\mu(y) - \int_{\mc{M}} u(y)d\mu(y) \int_{\mc{M}} v(y)d\mu(y) \] 
tends to $0$ as $t\to\infty$.
\begin{lemm}\label{mixing}
Let $X$ be a smooth Anosov vector field on a compact manifold $\mc{M}$ and let $d\mu$ be an invariant measure with respect to 
the flow of $X$. Then the flow is mixing if and only if the only pole of $R_\pm(\la)$ on the line $i\rr$ is $\la=0$ and it is a simple pole with residue $\pm (1\otimes 1)$. 
\end{lemm}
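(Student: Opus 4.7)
The plan is to exploit the integral representation
\[\cjg R_+(\la)u,v\cjd_{L^2} = \int_0^\infty e^{-\la t}\cjg u\circ\varphi_t, v\cjd\,dt, \quad \Re\la>0,\]
valid for $u,v\in C^\infty(\mc M)$, combined with the elementary Tauberian fact that if $g:[0,\infty)\to\cc$ is bounded with $\lim_{t\to+\infty}g(t)=c$, then $\eps\int_0^\infty e^{-(\eps+i\la_0) t}g(t)\,dt$ tends, as $\eps\to 0^+$, to $c$ when $\la_0=0$ and to $0$ when $\la_0\neq 0$ (proved by splitting the integral at a large threshold).

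For the forward direction, I would fix $u,v\in C^\infty(\mc M)$, set $g(t):=\cjg u\circ\varphi_t,v\cjd$ (bounded by Cauchy--Schwarz) with limit $c:=(\int u\,d\mu)(\overline{\int v\,d\mu})$ provided by mixing. The Tauberian fact and the integral representation together yield that $\eps\cjg R_+(\eps+i\la_0)u,v\cjd$ converges, as $\eps\to 0^+$, to $c$ when $\la_0=0$ and to $0$ when $\la_0\neq 0$. Lemma~\ref{poles} ensures the poles of $R_+$ on $i\rr$ are simple, so these limits are precisely the Laurent residues, identifying $\Res_{0}R_+(\la)=1\otimes 1$ on smooth observables (hence on $\mc H^{r,s}$ by density) and forcing $\Res_{i\la_0}R_+(\la)=0$ for $\la_0\neq 0$. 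The assertion for $R_-$ follows identically from $\cjg R_-(\la)u,v\cjd=-\int_0^\infty e^{-\la s}\cjg u\circ\varphi_{-s},v\cjd\,ds$ and the mixing of $\varphi_{-t}$ (a consequence of the invariance of $d\mu$), producing the opposite sign $-1\otimes 1$.

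For the converse, let $A:=-iX$, self-adjoint on $L^2(\mc M,d\mu)$; spectral calculus yields, for $u,v\in C^\infty(\mc M)$ and $\eps>0$,
\[\cjg R_+(\eps+i\la_0)u,v\cjd = -i\int_\rr \frac{d\sigma_{u,v}(\nu)}{\nu-(-\la_0+i\eps)},\]
exhibiting the matrix element as $-i$ times the Borel--Stieltjes transform of the $L^2$-spectral measure $\sigma_{u,v}$ of $A$, evaluated at $z=-\la_0+i\eps$ in the upper half-plane. The hypothesis supplies a holomorphic continuation of $\cjg R_+(\la)u,v\cjd$ across $i\rr\setminus\{0\}$, and for mean-zero $u,v$ the residue contribution $\cjg (1\otimes 1)u,v\cjd=(\int u)(\overline{\int v})$ at $0$ also vanishes, so the Borel transform admits a continuous boundary value on all of $\rr$. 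By the Plemelj--Sokhotski relations, the existence of this continuous limit precludes a singular part of $\sigma_{u,v}$, so it is absolutely continuous with density $h_{u,v}\in L^1(\rr)$ (the measure being finite). Riemann--Lebesgue applied to $\cjg u\circ\varphi_t,v\cjd=\int_\rr e^{it\nu}h_{u,v}(\nu)\,d\nu$ then yields mixing for smooth mean-zero observables; approximation together with the $L^2$-isometry $\|u\circ\varphi_t\|_{L^2}=\|u\|_{L^2}$ extends this to mean-zero pairs in $L^2$, and splitting off the projections onto constants recovers mixing for arbitrary $u,v\in L^2$.

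The main obstacle is this converse step. Lemma~\ref{poles} alone excludes only atoms in the $L^2$-point spectrum, whereas absolute continuity of the full spectral measure is what is needed; it is the \emph{genuine holomorphic} extension of the anisotropic Sobolev resolvent across $i\rr\setminus\{0\}$ (and across $0$ on mean-zero observables) that supplies the continuous boundary values of the Borel transform and, via Plemelj--Sokhotski, excludes any remaining singular continuous part.
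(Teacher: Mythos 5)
Your proof is correct and follows essentially the same route as the paper: the forward implication is the paper's Abelian/Tauberian limit argument, and the converse uses the same spectral-theoretic idea (holomorphic extension of the resolvent across $i\rr$ on mean-zero observables $\Rightarrow$ absolute continuity of the spectral measure on $\{1\}^\perp$ $\Rightarrow$ mixing). The only difference is that where the paper simply cites the classical fact that purely absolutely continuous spectrum implies mixing, you unfold it via Plemelj--Sokhotski and Riemann--Lebesgue.
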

\begin{proof} 
Assume that the flow is mixing. Let $u,v\in C^\infty(\mc{M})$, then for all $\eps>0$ small, there is $T_\eps$ such that for $|t|>T_\eps$, 
$|C_{t}(u,v)|\leq \eps$. Using \eqref{R_+R_-} we have for $\la>0$
\[\begin{split} 
\la\cjg R_+(\la)u,v\cjd=&\int_{0}^{T_\eps}\!\! \int_{\mc{M}} \la e^{-\la t}u(\varphi_t(y))v(y)dt\, d\mu(y)+ \int_{T_\eps}^{\infty}\la e^{-\la t}\cjg u,1\cjd\cjg v,1\cjd dt \\
 & + \int_{T_\eps}^{\infty}\la e^{-\la t}C_t(u,v)dt.
\end{split}\]
The first term has norm bounded by $(1-e^{-\la T_\eps})\|u\|_{L^2}\|v\|_{L^2}$, the second term is equal to $e^{-\la T_\eps}\cjg u,1\cjd\cjg v,1\cjd$ and the last 
term has norm bounded by $\eps e^{-\la T_\eps}$. Therefore letting $\la\to 0$ we obtain
\[ \lim_{\la\to 0^+}(\la\cjg R_+(\la)u,v\cjd)= \cjg u,1\cjd\cjg v,1\cjd+\mc{O}(\eps)\]
and since $\eps$ is arbitrarily small we deduce that the residue of $R_+(\la)$ at $\la=0$ is the rank-1 operator $1\otimes 1$. The same argument shows 
that the residue of $R_-(\la)$ at $\la=0$ is $-(1\otimes 1)$ and for all $\la_0\in i\rr\setminus \{0\}$
\[ \lim_{\la\to \la_0}((\la-\la_0)\cjg R_\pm(\la)u,v\cjd)=0\]
where the limit is understood as a limit from the right half-plane ${\rm Re}(\la)>0$.

Conversely,  we can use the formula \eqref{Stone} and the meromorphy of $R_\pm(\la)$ to deduce that the $L^2$-spectrum of $iX$ is made of absolutely 
continuous spectrum and pure point spectrum.  Moreover if $0$ is the only pole on the imaginary line and if it is simple with residue $1\otimes 1$, 
then it means that the spectrum on $\{1\}^{\perp}$ is absolutely continuous and it is a classical fact that this implies that the flow is mixing 
(see \cite[Theorem VII.15]{ReSi}). 
\end{proof}

For Anosov geodesic flows, mixing was proved by Anosov \cite{An}, and this was extended to contact Anosov flows by Burns-Katok \cite{BuKa}; in that last case, the rate of mixing is $C_t(u,v)=\mc{O}(e^{-\eps |t|})$ for some $\eps>0$ if $u,v\in C^\infty(\mc{M})$ by Liverani \cite{Li}. 

\subsection{The operator $\Pi$}

When the Anosov flow is mixing then by Lemma \ref{mixing}, we know that the resolvents $R_\pm(\la)$ have a simple pole at $\la=0$ and using Laurent expansion at $\la=0$ of $R_\pm(\la)$ together with 
\eqref{adjoint}, we see that there exists an operator $R_0:\mc{H}^{s,r}\to \mc{H}^{s,r}$ such that 
\begin{equation}\label{laurentexp} 
R_+(\la)=\frac{1\otimes 1}{\la}+ R_0+\mc{O}(\la), \quad R_-(\la)=-\frac{1\otimes 1}{\la}- R_0^*+\mc{O}(\la) 
\end{equation}
and therefore as operators $C^\infty(\mc{M})\to C^{-\infty}(M)$
\begin{equation} \label{XR0}
-XR_0={\rm Id}-1\otimes 1=-R_0X, \quad X R_0^*={\rm Id}-1\otimes 1=R_0^*X.
\end{equation}
This identity extends to those Sobolev spaces on which the operators are bounded (as given by Theorem \ref{fauresjos}). 
In particular, by the Fredholm property of $X$ on $\mc{H}^{s,r}$ and $\mc{H}^{r,s}$, we deduce that 
the kernel of $X$ on $\mc{H}^{s,r}$ and $\mc{H}^{r,s}$ is simply made of constants.
The operator $R_0$ is simply obtained as the limit
\begin{equation}\label{defR0}
R_0=\lim_{\la\to 0}(R_+(\la)-\la^{-1}(1\otimes 1)).
\end{equation}
When the flow is not mixing, the same exact properties hold as long as ${\rm Res}_0R_+(\la)=1\otimes 1$, which is equivalent to say  
that ${\rm Res}_0R_+(\la)$ is self-adjoint and the flow is ergodic (so the projector $\Pi_{{\rm Eig}(0)}$ on the $L^2$-kernel is 
$1\otimes 1$).

We can now show 
\begin{theo}\label{thPi}
Let $\mc{M}$ compact and $X$ be a smooth vector field generating an Anosov flow preserving a smooth invariant probability measure $d\mu$. Assume that ${\rm Res}_0R_+(\la)=1\otimes 1$ (this is in particular true if the flow is mixing). 
For all $s>0$ and $r<0$, the operator $\Pi:=R_0+R_0^*:H^s(\mc{M})\to H^r(\mc{M})$ is bounded and 
satisfies 
\[\begin{gathered}
X\Pi f=0,\,\,\,  \forall f\in H^s(\mc{M}), \,\, \textrm{ and } \,\, \Pi Xf=0,\,\,\, \forall f\in H^{s+1}(\mc{M}),\\
f\in C^\infty(\mc{M})\Longrightarrow {\rm WF}(\Pi f)\subset E_u^*\cup E_s^*.
\end{gathered}\]
Let $f\in H^{s}(\mc{M})$ with $\cjg f,1\cjd=0$ and set $u_+:=-R_0f\in \mc{H}^{s,r}$, $u_-:=R_0^*f\in \mc{H}^{r,s}$ so that 
\[Xu_+=Xu_-=f\] 
by \eqref{XR0}. Then
$f\in \ker\Pi\cap H^s(\mc{M})$ if and only if there exists $s'>0$ and a solution 
$u\in H^{s'}(\mc{M})$ to $Xu=f$; 
in this case the solution $u$ is actually in $H^s(\mc{M})$ and 
is unique modulo constants, given by $u=u_+=u_-$. 
\end{theo}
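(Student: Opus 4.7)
\emph{Soft properties.} Applied to both $X$ and $-X$, Theorem~\ref{fauresjos} gives $R_0$ bounded on $\mc{H}^{s,r}$ and $R_0^*$ bounded on $\mc{H}^{r,s}$; combined with the inclusions $H^s(\mc{M}) \subset \mc{H}^{s,r}, \mc{H}^{r,s}\subset H^r(\mc{M})$ from \eqref{nested}--\eqref{nested2}, this yields $\Pi\colon H^s(\mc{M}) \to H^r(\mc{M})$ bounded. The identities $X\Pi f = 0$ and $\Pi X f = 0$ follow from \eqref{XR0} after noting that $\cjg Xf,1\cjd = -\cjg f, X1\cjd = 0$ kills the rank-one contribution $1\otimes 1$. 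For $f\in C^\infty(\mc{M})$, the distribution $R_0 f$ lies in $\mc{H}^{s,r}$ for every $s>0$, hence is microlocally in $H^s$ everywhere outside any preassigned small conic neighbourhood of $E_s^*$, and therefore smooth there; symmetrically $R_0^* f$ is smooth away from $E_u^*$, giving $\WF(\Pi f)\subset E_u^*\cup E_s^*$.

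\emph{Characterisation of $\ker\Pi$.} If $\Pi f = 0$ with $\cjg f,1\cjd = 0$, then $u := u_+ = u_-$ sits simultaneously in $\mc{H}^{s,r}$ and $\mc{H}^{r,s}$, so Lemma~\ref{regul} places it in $H^{s\eps + r}$; choosing $|r|<s\eps$ yields a solution $u\in H^{s'}(\mc{M})$ with $s'>0$ and $Xu=f$. Conversely, if $u\in H^{s'}(\mc{M})$ solves $Xu=f$, then $u\in\mc{H}^{s',r}$; the hypothesis $\Res_0 R_+ = 1\otimes 1$ together with Theorem~\ref{fauresjos} gives $\ker X|_{\mc{H}^{s',r}} = \cc\cdot 1$, so $u - u_+$ is constant. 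Since $R_\pm(\la)\cdot 1 = \pm 1/\la$, the Laurent expansion \eqref{laurentexp} forces $R_0 1 = R_0^* 1 = 0$, so $u_+$ and $u_-$ have mean zero; matching means then yields $u - u_+ = u - u_- = \cjg u, 1\cjd$, hence $u_+ = u_-$, $\Pi f = 0$, and $u$ is unique modulo constants.

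\emph{Bootstrap $u\in H^{s'}\to u\in H^s$, and the main obstacle.} Near $E_u^*$, $u=u_+\in\mc{H}^{s,r}$ is microlocally in $H^s$; near $E_s^*$, $u=u_-\in\mc{H}^{r,s}$ is microlocally in $H^s$; off the characteristic set $E_u^*\oplus E_s^*$ of $X$, ellipticity applied to $Xu=f\in H^s$ gives microlocal $H^{s+1}$ regularity. The remaining intermediate portion of the characteristic set, where the anisotropic spaces only provide $H^{s\eps+r}$ a priori, I would fill in by propagation of singularities for the real-principal-type operator $X$: every bicharacteristic in $\{\xi(X(y))=0\}\setminus(E_u^*\cup E_s^*)$ emanates from the radial source $E_u^*$ as $t\to-\infty$ and converges to the radial sink $E_s^*$ as $t\to+\infty$, so the $H^s$ regularity established in neighbourhoods of each radial set propagates along the Hamilton flow (forward from $E_u^*$ and backward from $E_s^*$) and matches in the middle. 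I expect this propagation-of-singularities step to be the main obstacle, because the radial-point behaviour at $E_u^*$ and $E_s^*$ and the matching of sign conventions between $\mc{H}^{s,r}$ and $\mc{H}^{r,s}$ must be handled simultaneously; this is exactly the microlocal content encoded by the anisotropic space construction of Faure--Sj\"ostrand and the wave-front analysis of Dyatlov--Zworski (Proposition~\ref{dyatlovzworski}) that supports the whole setup.
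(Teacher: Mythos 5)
Your proposal reproduces the paper's argument in all essentials: boundedness from Faure--Sj\"ostrand, the wave-front bound for $\Pi f$ (you read it off the anisotropic spaces by letting $s\to\infty$, while the paper instead invokes Proposition~\ref{dyatlovzworski} together with H\"ormander's wave-front-set calculus; both routes are valid), the Fredholm/kernel argument for the equivalence with solvability of $Xu=f$ in positive Sobolev regularity, and the propagation-of-singularities bootstrap to $H^s$. One cosmetic difference: instead of computing $R_0 1=R_0^*1=0$ directly from $R_\pm(\la)1=\pm 1/\la$ as you do, the paper notes $XR_0(1)=XR_0^*(1)=0$ so that $\Pi(1)$ is a constant, then uses self-adjointness of $\Pi$ and $\cjg f,1\cjd=0$ to kill the constant $u_+-u_-$; the two derivations are equivalent. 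One small slip worth flagging: on the characteristic set $\{\xi(X)=0\}$ the projectivized Hamilton trajectories tend to $E_u^*$ as $t\to+\infty$ and to $E_s^*$ as $t\to-\infty$ (see the dynamics recalled in the paper's proof), not the reverse as you wrote; since you have $H^s$ a priori control at \emph{both} radial sets this does not affect the validity of the bootstrap, but the direction of propagation you invoke is reversed.
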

\begin{proof}
The first part follows from the boundedness of the operator $R_0$ and $R_0^*$ in Theorem \ref{fauresjos} and the relations \eqref{XR0}. The wave front set description of $\Pi f$ is a consequence of Proposition \ref{dyatlovzworski} and \cite[Theorem 8.2.12]{Ho1}.
If $f\in H^s(\mc{M})$ is in $\ker \Pi$ then $u_+=u_-$ and $Xu_+=f$, moreover by Lemma \ref{regul} and taking 
$r=-\eps s/2$ for some $\eps>0$ small independent of $s$, one has $u_+\in H^{s\eps/2}(\mc{M})$. 
If there is another solution 
in $L^2(\mc{M})$ for some $\eps>0$, there is a flow invariant $L^2$ function and so it is constant. 
In fact, we can prove better regularity of $u_+$ using propagation of singularities, namely that 
\begin{equation}\label{v_+}
u_+\in H^s(\mc{M}).
\end{equation}
We know that $u_+\in \mc{H}^{s,r}\cap \mc{H}^{r,s}$ and thus, from the definition of the spaces $\mc{H}^{s,r}$ and $\mc{H}^{r,s}$, we have that 
$A_0u_+\in H^s(\mc{M})$ if $A_0\in \Psi^0(\mc{M})$ is microsupported in a sufficiently small 
conic neighborhood of $E_u^*\cup E_s^*$ and elliptic in a conic neighbourhood $W_0$ of 
$E_u^*\cup E_s^*$. 
By classical elliptic estimates, we also have that $A_1u_+\in H^{s}(\mc{M})$ if 
$A_1\in \Psi^0(\mc{M})$ has wave front set not intersecting the characteristic region 
$E_s^*\oplus E_u^*=\{(y,\xi)\in T^*\mc{M}; \xi(X(y))=0\}$, and we can assume $A_1$ elliptic outside a small conic
 neighborhood of $E^*_s\oplus E_u^*$, we call $W_1$ the region of ellipticity of $A_1$. 
 Moreover, for all $(y,\xi)\notin W_0\cup W_1$, the trajectory $\Phi_t(y,\xi)$ of the Hamilton flow of the principal symbol 
of $-iX$ (ie. the symplectic lift of $\varphi_t$) reach $W_1\cup W_0$ in either forward or backward finite time: 
this is a consequence of the fact that for all $(y,\xi)\notin E_s^*$, $\Phi_t(y,\xi)/|\Phi_t(y,\xi)|$ 
tend to $E_0^*\oplus E_u^*$ as $t\to +\infty$ and for all $(y,\xi)\notin E_u^*$, $\Phi_t(y,\xi)/|\Phi_t(y,\xi)|$ 
tend to $E_0^*\oplus E_s^*$ as $t\to -\infty$ (see \cite[Section 2]{FaSj} for example). We are going to apply  
the propagation of singularities for real principal type differential operators given in \cite[Proposition 2.5]{DyZw1}  that we recall now (see \cite[Section 3]{Ho0} for the original argument in the case of constant coefficients operators): assume that $u\in H^{-N}(\mc{M})$ for some $N>0$, and let $A,B,B_1\in \Psi^0(\mc{M})$ that are elliptic in respective conic subsets $\mc{A},\mc{B},\mc{B}_1 \subset T^*\mc{M}$ such that $Bu\in H^{s}(\mc{M})$, $B_1Xu\in H^{s}(\mc{M})$ for some $s>-N$; if for all $(y,\xi)$ in the microsupport (wave front set) of $A$ there is $T>0$ such that $\Phi_{-T}(y,\xi)\in \mc{B}$ and $\Phi_{t}(y,\xi)\in \mc{B}_1$ for all $t\in[-T,0]$, then $Au\in H^s(\mc{M})$.
Applied to our case, we thus deduce that  \eqref{v_+} holds since $-iXu_+\in H^s(\mc{M})$.

We now prove the converse. If there is $u\in H^{s'}(\mc{M})$ so that $Xu=f$ with $0<s'\leq s$, then 
$u-u_+$ is constant since $u\in \mc{H}^{s',r}$, $u_+\in \mc{H}^{s',r}$ by \eqref{nested2} 
and $-X$ is Fredholm on $\mc{H}^{s',r}$ with kernel given by constants. 
Similarly, $u-u_-$ is constant, and thus $u\in \mc{H}^{s,r}\cap \mc{H}^{r,s}$. Then $u_+-u_-=C$ for 
some $C\in \cc$ and 
$\Pi f=C$. Since $XR_0 (1)=XR_0^*(1)=0$ we have that $R_0(1)$ and $R_0^*(1)$ are constants   
and thus $\Pi(1)$ is constant. We obtain $\cjg \Pi f,1\cjd=\cjg f,\Pi(1)\cjd=0$ and thus $C=0$, showing $\Pi f=0$. We also have that 
$u_+\in H^s(\mc{M})$ by the arguments leading to \eqref{v_+}. This completes the proof.
\end{proof}

Taking $r=-s$ for $s>0$ fixed, $\Pi$ is self-adjoint as a map $H^{s}(\mc{M})\to H^{-s}(\mc{M})$ if we identify 
$H^{-s}(\mc{M})$ with the dual of $H^s(\mc{M})$, in the sense 
$\cjg \Pi f,f'\cjd=\cjg f,\Pi f'\cjd$ for all $f,f'\in H^s(\mc{M})$.
Moreover it maps any $H^s(\mc{M})$ to the space of invariant distributions defined by
\begin{equation}\label{defI}
\mc{I}:=\bigcap_{r<0} \mc{I}_r, \quad \mc{I}_r:=\{ w\in H^{r}(\mc{M}); \,\, Xw=0\}.
\end{equation}
We claim that the image of $\Pi$ is infinite dimensional for Anosov flows satisfying the assumptions of Theorem \ref{thPi}. 
Indeed, for each closed orbit $\gamma$ of $X$, there is a smooth function $f$  supported in an arbitrarily small tubular neighbourhood which equals $1$ on $\gamma$ and $\cjg f,1\cjd=0$, and $f\notin \ker \Pi$ since, if we had $\Pi f=0$, by Theorem \ref{thPi} this would imply that there exists $u\in C^\infty(\mc{M})$ such that $Xu=f$, and thus 
$\int_\gamma f=0$, contradicting the fact that $f=1$ near $\gamma$. 
By \cite[Theorem 3]{An}, there are countably infinitely many (disjoint) periodic orbits $\gamma_k$ and thus for any $N\in\nn$ we can construct smooth functions $(f_k)_{k=1,\dots,N}$ with disjoint supports, so that $\int_{\gamma_j}f_k=\delta_{jk}$ for $j,k\leq N$.
We deduce that $\dim {\rm span}\{\Pi f_k; k\leq N\}=N$ since if $\Pi(\sum_{k=1}^Na_kf_k)=0$ for some $a_k\in\cc$,   
 there exists $u\in C^\infty(\mc{M})$ such that $Xu=\sum_{k=1}^Na_kf_k$, thus 
  $0=\int_{\gamma_j}Xu=a_j$ for each $j\leq N$. Letting $N\to \infty$, we see that the range of $\Pi$ has infinite dimension.

The alternative expression \eqref{weaklim} for $\cjg \Pi f,\psi\cjd$ if $\cjg f,1\cjd=0$  is a direct consequence of 
\eqref{R_+R_-} and \eqref{laurentexp}.
We now give a direct corollary of Theorem \ref{thPi}, which proves the density of the range 
of $\Pi$ in the space of invariant distributions for the flow.
\begin{corr}\label{directcor}
Let $\mc{M}$ and $X$ satisfy the same assumptions as in Theorem \ref{thPi}. 
For each $s>0$, the space $\mc{I}_{-s}$ of invariant distributions in $H^{-s}(\mc{M})$ 
is the closure in $H^{-s}(\mc{M})$ of ${\rm Ran}(\Pi|_{H^s}):=\{\Pi f\in H^{-s}(\mc{M}); f\in H^s(\mc{M})\}$, which in turn is the orthogonal of $\ker (\Pi|_{H^s(\mc{M})})$. 
\end{corr}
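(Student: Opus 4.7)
The plan is to prove the chain of equalities $\mc{I}_{-s} = \overline{{\rm Ran}(\Pi|_{H^s})} = \ker(\Pi|_{H^s})^{\perp}$, where the orthogonal is taken in $H^{-s}(\mc{M})$ via the $L^2$ pairing identifying $H^{-s}$ with the dual of $H^s$. The outer equality is an abstract bipolar-theorem consequence of the self-adjointness of $\Pi : H^s \to H^{-s}$ asserted in Theorem \ref{thPi}: the pre-annihilator in $H^s$ of the range ${\rm Ran}(\Pi|_{H^s})$ is $\{f : \cjg \Pi g, f \cjd = 0 \,\,\forall g\} = \{f : \Pi f = 0\} = \ker(\Pi|_{H^s})$ by self-adjointness, and its annihilator in $H^{-s}$ is then simultaneously the $H^{-s}$-closure of the range and $\ker(\Pi|_{H^s})^{\perp}$.

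For the identification $\mc{I}_{-s} = \overline{{\rm Ran}(\Pi|_{H^s})}$: the inclusion $\overline{{\rm Ran}(\Pi|_{H^s})} \subset \mc{I}_{-s}$ is immediate from $X \Pi = 0$ (Theorem \ref{thPi}) together with the closedness of $\mc{I}_{-s}$ in $H^{-s}$, the operator $X : H^{-s} \to H^{-s-1}$ being bounded. For the reverse inclusion $\mc{I}_{-s} \subset \ker(\Pi|_{H^s})^{\perp}$, I would take $w \in \mc{I}_{-s}$ and $f \in \ker \Pi \cap H^s$; a direct computation with $R_\pm(\la)(1) = \pm 1/\la$ and the Laurent expansion \eqref{laurentexp} gives $R_0(1) = R_0^*(1) = 0$, so the mean-zero reduction $f - \cjg f, 1 \cjd \cdot 1$ still lies in $\ker \Pi$, and by the coboundary characterization in Theorem \ref{thPi} it equals $Xu$ for some $u \in H^s$; integrating by parts yields $\cjg w, Xu \cjd = -\cjg Xw, u \cjd = 0$.

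The main point requiring care is the bookkeeping around constants: since $1 \in \ker \Pi$, every element of $\ker(\Pi|_{H^s})^{\perp}$ has vanishing $L^2$-mean, so what the argument above literally produces is the identification of $\overline{{\rm Ran}(\Pi|_{H^s})}$ with the codimension-one subspace of mean-zero invariant distributions in $\mc{I}_{-s}$; the full statement of the corollary is then to be understood modulo the trivial invariant $\cc$-line spanned by the constants.
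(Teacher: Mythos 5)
Your argument follows the same essential route as the paper's: both exploit the self-adjointness of $\Pi:H^s\to H^{-s}$, the coboundary characterization of $\ker\Pi$ from Theorem \ref{thPi}, and an integration by parts against an invariant distribution. The only structural difference is that you invoke the bipolar theorem first and then prove the two set-theoretic inclusions, whereas the paper establishes $\ker \Pi|_{H^s} = \mc{I}_{-s}^\perp$ directly and passes to annihilators at the end; these are the same computation in a different order.

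Two points deserve attention. First, your caveat about constants is not mere bookkeeping: it identifies a genuine imprecision in the corollary as stated and in the paper's own proof. Since $R_0(1)=R_0^*(1)=0$, the constant function $1$ lies in $\ker \Pi|_{H^s}$, but also $1\in\mc{I}_{-s}$ with $\cjg 1,1\cjd\neq 0$, so $\ker \Pi|_{H^s}\neq\mc{I}_{-s}^\perp$ and hence $\bbar{{\rm Ran}(\Pi|_{H^s})}\neq\mc{I}_{-s}$. The paper's converse step invokes Theorem \ref{thPi} for a general element of $\ker\Pi$ without imposing $\cjg f,1\cjd=0$, which is where the slip occurs. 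What actually holds, and what your argument correctly produces, is $\bbar{{\rm Ran}(\Pi|_{H^s})}=\mc{I}_{-s}\cap\{1\}^\perp$. Second, the step ``integrating by parts yields $\cjg w,Xu\cjd=-\cjg Xw,u\cjd=0$'' needs justification: with $u\in H^s(\mc{M})$ and $w\in\mc{I}_{-s}\subset H^{-s}(\mc{M})$ only of Sobolev regularity, the pairing identity is not automatic. The paper handles it via \cite[Lemma E.41]{DyZw2}, approximating $u$ by smooth $u_j$ with $u_j\to u$ and $Xu_j\to Xu$ in the relevant topology, so that $\cjg w,Xu\cjd=\lim_j\cjg w,Xu_j\cjd=-\lim_j\cjg Xw,u_j\cjd=0$; you should make this explicit rather than leaving it as formal integration by parts.
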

\begin{proof}
If $f\in H^s(\mc{M})$ is orthogonal to $\mc{I}_{-s}$, then using that ${\rm Ran}(\Pi|_{H^s})\subset \mc{I}_{-s}$, we have $0=\cjg \Pi u,f\cjd=\cjg u,\Pi f\cjd$ 
for all $u\in H^s(\mc{M})$ thus $\Pi f=0$. Conversely, if $\Pi f=0$, by Theorem \ref{thPi} we get $Xu=f$ for some $u\in H^s(\mc{M})$, we want to show that $\cjg Xu,w\cjd=0$ for all $w\in \mc{I}_{-s}$: by \cite[Lemma E.41]{DyZw2} there is a sequence $u_j\in C^\infty(\mc{M})$
such that $u_j\to u$ in $H^{-s}(\mc{M})$ and $Xu_j\to Xu$ in $H^{-s}(\mc{M})$ as $j\to \infty$, thus 
$\cjg w,Xu\cjd=\lim_{j\to\infty} \cjg w,Xu_j\cjd=-\lim_{j\to\infty}\cjg Xw,u_j\cjd=0$. We deduce that the space $\mc{I}_{-s}^\perp:=\{ f\in H^s(\mc{M}); \forall w\in\mc{I}_{-s}, \, \cjg w,f\cjd=0\}$ is equal to $\ker \Pi|_{H^s(\mc{M})}$. On the other hand $({\rm Ran}(\Pi|_{H^s}))^\perp=(\bbar{{\rm Ran}(\Pi|_{H^s})})^\perp$ where the closure is in $H^{-s}(\mc{M})$ and $\ker \Pi|_{H^s}=({\rm Ran}(\Pi|_{H^s}))^\perp$ thus 
$\ker \Pi|_{H^s} =\mc{I}_{-s}^\perp\subset ({\rm Ran}(\Pi|_{H^s}))^\perp=\ker \Pi|_{H^s}$
and we deduce that $\bbar{{\rm Ran}(\Pi|_{H^s})}=\mc{I}_{-s}$.
\end{proof}

Using the operator $\Pi$, we also recover the smoothness result of \cite{DMM, Jo} for the solution of the cohomological equation and, indeed, we get a Sobolev version which does not seem to be available in the literature:
\begin{corr}\label{livsic}
With the same assumptions as in Theorem \ref{thPi}, if the flow is topologically transitive (which is the case for mixing flows), then 
for all $s>{\dim \mc{M}}/2$ and all $f\in H^{s}(\mc{M})$ satisfying  $\int_{\gamma}f=0$ 
for all closed orbits $\gamma$ of $X$, there exists
$u\in H^{s}(\mc{M})$ such that $Xu=f$. In particular if $f\in C^\infty(\mc{M})$ then $u\in C^\infty(\mc{M})$.
\end{corr}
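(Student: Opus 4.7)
The plan is to reduce Corollary \ref{livsic} to the kernel characterization in Theorem \ref{thPi} by first producing a low-regularity primitive via the classical H\"older Livsic theorem, and then upgrading its Sobolev regularity using $\Pi$.

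First, since $s>\dim\mc{M}/2$, Sobolev embedding yields $f\in C^{\alpha}(\mc{M})$ for some $\alpha>0$. Because the flow is topologically transitive Anosov and $\int_{\gamma}f=0$ for every closed orbit $\gamma$, the classical H\"older Livsic theorem of \cite{DMM,Jo,HuKa} produces a H\"older continuous function $u\in C^{\beta}(\mc{M})$ (with some $\beta>0$) satisfying $Xu=f$ in the distributional sense.

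Next, I note that $C^{\beta}(\mc{M})$ embeds into $H^{s'}(\mc{M})$ for any $0<s'<\beta$, so in particular $u\in L^{2}(\mc{M})$ and $u\in H^{s'}(\mc{M})$ for some $s'>0$. Pairing with $1$ gives
\[ \cjg f,1\cjd_{L^{2}}=\cjg Xu,1\cjd=0,\]
which is made rigorous by approximating $u$ by smooth functions $u_{j}$ as in the proof of Corollary \ref{directcor}, using \cite[Lemma E.41]{DyZw2} to guarantee $Xu_{j}\to Xu$ in a negative Sobolev space. I am then in position to apply the second part of Theorem \ref{thPi}: since $f\in H^{s}(\mc{M})$ has vanishing integral and admits an $H^{s'}$-solution to $Xu=f$, we conclude $f\in\ker\Pi$ and the \emph{very same} $u$ already lies in $H^{s}(\mc{M})$. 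If moreover $f\in C^{\infty}(\mc{M})$, applying the same conclusion for every $s>0$ gives $u\in H^{s}(\mc{M})$ for all $s$, hence $u\in C^{\infty}(\mc{M})$ by Sobolev embedding.

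The main obstacle is the very first step: the existence of even a H\"older primitive $u$ with $Xu=f$. This input is not produced by the resolvent/distribution machinery developed in this paper but relies on the Anosov closing lemma and topological transitivity of the flow, which is precisely why the hypothesis of transitivity appears. The novelty of the corollary is not in obtaining a primitive but in the automatic regularity upgrade supplied by Theorem \ref{thPi}: once any positive-Sobolev (or equivalently H\"older) solution is available, the kernel characterization of $\Pi$ promotes it all the way up to $H^{s}$, which in the classical approach requires a separate and substantially more delicate bootstrapping argument.
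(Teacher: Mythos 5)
Your proposal is correct and follows essentially the same route as the paper: Sobolev embedding to get H\"older regularity, the classical H\"older Livsic theorem (the paper cites \cite[Theorem 19.2.4]{KaHa}) to produce a H\"older primitive, the inclusion $C^{\beta}(\mc{M})\subset H^{s'}(\mc{M})$ on a compact manifold, and then the kernel characterization in Theorem \ref{thPi} to upgrade the regularity of $u$ to $H^{s}$. You are in fact slightly more careful than the paper in that you explicitly verify the hypothesis $\cjg f,1\cjd=0$ before invoking Theorem \ref{thPi}.
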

\begin{proof} Let $n=\dim \mc{M}$.
If $f\in H^{n/2+\nu}(M)$ for $\nu\in (0,1)$, then $f\in C^{\nu}(\mc{M})$ and since the flow is assumed topologically transitive, 
by Livsic theorem for H\"older function \cite[Theorem 19.2.4]{KaHa}, we know that there exist $u\in C^\nu(\mc{M})$ so that $Xu=f$. 
Then we get $u\in H^{\nu'}(\mc{M})$ for all $\nu'<\nu$ since $\mc{M}$ is compact and so by Theorem \ref{thPi} we see that $f\in \ker \Pi$, 
which implies that there is $u\in H^{s}(\mc{M})$ so that $Xu=f$ and $u$ is unique modulo constants.
\end{proof}

\begin{rem}
The three results above hold as well if instead of assuming that the residue ${\rm Res}_0R_+(\la)$ is $1\otimes 1$,  we only assume that ${\rm Res}_0R_+(\la)$ is self-adjoint (which is equivalent to assuming that the residue is equal to the spectral projector on the $L^2$ kernel of $X$, by Lemma \ref{poles}), but then we need to take $f$ orthogonal to the $L^2$-kernel of $X$ and uniqueness modulo constants needs to be replaced by uniqueness modulo the $L^2$-kernel.
\end{rem}

\section{Anosov Geodesic flows}

In this section, we consider the special case of $\mc{M}=SM$ being the unit tangent bundle of a compact Riemannian manifold $(M,g)$ such that the geodesic flow $\varphi_t:SM\to SM$ of the metric $g$ is Anosov. 
We shall denote $\pi_0:SM\to M$ the natural projection $\pi_0(x,v)=x$ where $x\in M$ is the base point of the element 
$(x,v)\in SM$. 

\subsection{X-ray transform on functions} 

In this section, we study the operator $\Pi$ acting on pull-back of functions on $M$. Recall that the geodesic flow is mixing \cite{An}.
 The pull-back 
operator $\pi_0^*: C^\infty(M)\to C^\infty(SM)$ induces a push-forward map ${\pi_0}_*: C^{-\infty}(SM)\to C^{-\infty}(M)$ on distributions by 
\[ \cjg {\pi_0}_*u,\psi \cjd:=\cjg u,\pi_0^*\psi\cjd, \quad \forall \psi\in C^\infty(M).\]
We then define the operator 
\begin{equation}\label{Pi0} 
\Pi_0:= {\pi_0}_*\Pi\,  \pi_0^* :  \, C^\infty(M)\to C^{-\infty}(M).
\end{equation}
This operator corresponds exactly to the operator $I_0^*I_0$ which appears in 
the setting metric on manifolds with boundary, with $I_0$ being the X-ray transform on functions 
(see Section 5.1 in \cite{Gu} for explanations).
The goal of this section is to prove 
\begin{theo}\label{microlocalPi0}
If $(M,g)$ has Anosov geodesic flow, the operator $\Pi_0$ is an elliptic self-adjoint pseudo-differential operator of order $-1$, with principal 
symbol 
\[ \sigma(\Pi_0)(x,\xi)=C_n|\xi|_{g_x}^{-1}\]
where $C_n$ is a non-zero constant depending only on $n$. As a consequence, the kernel 
$\ker \Pi_0:=\{f\in C^{-\infty}(M);\,\, \Pi_0f=0\}$
is finite dimensional and its elements are smooth.
\end{theo}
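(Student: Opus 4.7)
The plan is to identify $\Pi_0$ as an elliptic classical pseudodifferential operator of order $-1$ by splitting it into a short-time contribution, which is the model normal operator of the geodesic X-ray transform, and a long-time contribution that is smoothing thanks to the wave front analysis in Proposition \ref{dyatlovzworski}. Self-adjointness of $\Pi_0$ is immediate from $\Pi^*=\Pi$.

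I would start from the weak limit formula \eqref{weaklim}: for $f\in C^\infty(M)$ with $\cjg f,1\cjd_{L^2(M)}=0$,
\[
\cjg \Pi_0 f,\psi\cjd =\lim_{\la\to 0^+}\int_{\rr} e^{-\la|t|}\cjg \pi_0^*f\circ\varphi_t,\pi_0^*\psi\cjd_{L^2(SM)}\,dt,
\]
and pick $\chi\in C_c^\infty(\rr)$ equal to $1$ near $0$, decomposing the integrand into a short-time piece $\chi(t)$ and a long-time piece $1-\chi(t)$ (the constants of $f$ contribute only a smoothing correction, hence are irrelevant). The short-time operator $\Pi_0^{\rm sh}$ has Schwartz kernel
\[
K^{\rm sh}(x,y)=\int_{\rr}\chi(t)\int_{S_xM}\delta\!\left(y-\pi_0\varphi_t(x,v)\right)dS_x(v)\,dt,
\]
and, after the substitution $y=\exp_x(tv)$ with Jacobian $|t|^{n-1}(1+O(|t|^2))$, this is $|y-x|_g^{-(n-1)}$ times a smooth amplitude supported near the diagonal. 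This is the classical integral kernel of an elliptic $\Psi$DO of order $-1$ on $M$ with principal symbol $C_n|\xi|_g^{-1}$ for some non-zero dimensional constant $C_n$, exactly as in the normal-operator analysis for geodesic X-ray transforms on simple manifolds.

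The heart of the proof is showing that the long-time complement $\Pi_0-\Pi_0^{\rm sh}$ is smoothing. Its Schwartz kernel is the fiber pushforward under $\pi_0\x\pi_0$ of a distribution whose wave front is controlled by Proposition \ref{dyatlovzworski}. By the pullback-pushforward rules for wave front sets, a covector $(x,y,\xi,\eta)\in T^*(M\x M)\setminus 0$ can appear in $\WF(\Pi_0-\Pi_0^{\rm sh})$ only if there are points $(x,v),(y,w)\in SM$ for which $\pi_0^*\xi$ at $(x,v)$ and $\pi_0^*\eta$ at $(y,w)$ form a wave front point of $\Pi$ corresponding to a time $t\notin\mathrm{supp}(\chi)$. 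The key geometric transversality is
\[
E_u^*(x,v)\cap \pi_0^*(T^*_xM)=E_s^*(x,v)\cap \pi_0^*(T^*_xM)=\{0\},
\]
which follows from $V_{(x,v)}+E_{u/s}(x,v)+\rr X(x,v)=T_{(x,v)}SM$, itself a consequence of the standard transversalities $V\cap E_{u/s}=\{0\}$, the fact that the Liouville form $\alpha$ vanishes on the vertical $V$, and $\alpha(X)=1$. This rules out contributions from the $E_u^*\x E_s^*$ and $E_s^*\x E_u^*$ pieces of $\WF(\Pi)$. For the $\Omega_\pm$ pieces, horizontality of both $\xi$ and $(d\varphi_{\mp t})^T\xi$ together with $\xi(X)=0$ and $t$ bounded away from $0$ is an overdetermined system, since the Anosov dynamics implies $d\varphi_{\mp t}(V)\neq V$ for $t\neq 0$; hence the $\Omega_\pm$ contributions disappear as well.

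Combining, $\Pi_0\in\Psi^{-1}(M)$ with non-vanishing principal symbol $C_n|\xi|_g^{-1}$, so $\Pi_0$ is elliptic. The consequence that $\ker\Pi_0$ is finite-dimensional and consists of smooth functions is then standard elliptic theory on a compact manifold. The main obstacle I anticipate is the clean bookkeeping of the $\Omega_\pm$ contributions under the fiber integration; ruling them out rigorously requires examining how the vertical subbundle is moved by $d\varphi_t$, which is where the Anosov hypothesis enters most sharply into the argument.
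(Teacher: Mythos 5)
Your overall strategy is the same as the paper's: split $\Pi_0$ into a short-time normal-operator part and a long-time part, compute the conormal singularity of the former explicitly (this is the Pestov--Uhlmann computation, giving $C_n|\xi|_g^{-1}$), and kill the latter by pushing forward the wave front set of $R_0$ (Proposition \ref{dyatlovzworski}) under $\pi_0\otimes\pi_0$. The smooth cutoff $\chi(t)$ is a small streamlining over the paper's sharp $[0,\eps]$ cut, since it avoids the spurious singularity supported on $\Delta_\eps(M\times M)$ that the paper has to dispose of by observing that $\eps$ is arbitrary. Your self-contained derivation of $E_{u/s}^*\cap H^*=\{0\}$ from $\alpha(X)=1$, $\alpha(V)=\alpha(E_{u/s})=0$ and $V\cap E_{u/s}=\{0\}$ is correct and is a nice alternative to simply citing Paternain's book.

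The genuine gap is in the $\Omega_\pm$ step, and it is the crux of the argument. You claim these contributions vanish because ``the Anosov dynamics implies $d\varphi_{\mp t}(V)\neq V$ for $t\neq 0$,'' but that is not the condition you need; it is far too weak (for instance it already holds for most $t$ on any Riemannian manifold and proves nothing about the wave front pushforward). A covector contribution to $\Omega_\pm$ survives the fiber pushforward precisely when there is a nonzero $\eta\in T^*(SM)$ annihilating $V$, $d\varphi_{-t}(V)$ and $X$ simultaneously, i.e.\ when $V+d\varphi_{-t}(V)+\rr X\neq T(SM)$. Using the invariance of the contact form, $V+d\varphi_{-t}(V)\subset\ker\alpha$, which has dimension $2n-2$, and $X\notin\ker\alpha$; so what is needed is exactly $V\cap d\varphi_{-t}(V)=\{0\}$ for $t\neq 0$. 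That statement is equivalent to the absence of conjugate points along geodesics, and the input that makes it true in the Anosov setting is Klingenberg's theorem. The paper makes this explicit through the almost complex structure $J$: if $\eta$ and $(d\varphi_t^{-1})^T\eta$ are both in $H^*$ and $\eta(X)=0$, then the Sasaki dual $\zeta\in H$ has $J\zeta\in V$ and $d\varphi_t(J\zeta)\in V$, producing a conjugate pair, which Klingenberg forbids for Anosov geodesic flows. You correctly identify this as ``the main obstacle'' but stop at an imprecise (and as stated, incorrect) claim rather than producing the no-conjugate-points input, so the smoothing of the long-time piece is not actually established.
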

\begin{proof}
Let us first show the first statement using Proposition \ref{dyatlovzworski}. We will first show that the wave front set of the Schwartz kernel of $\Pi_0$ is conormal to the diagonal, and then reduce the computation of the symbol to the case dealt with by Pestov-Uhlmann \cite{PeUh}.
We write if ${\rm Re}(\la)>0$ 
\[R_+(\la)=\int_0^\eps e^{-\la t}e^{tX}dt+e^{-\eps \la}e^{\eps X}R_+(\la).\]
where $\eps\geq 0$ is small and this extends meromorphically to $\cc$ when acting on smooth functions. At $\la=0$, 
we deduce from $\eqref{laurentexp}$ that the finite part of $R_+(\la)$ is
\[R_0=\int_0^\eps e^{tX}dt + e^{\eps X}R_0-\eps(1\otimes 1).\]
The last term is smoothing, we now describe the wave front set of the Schwartz kernel of ${\pi_0}_*e^{\eps X}R_0\pi_0^*$.  
By \cite[Theorem 8.2.4]{Ho1}, the wave front set of the Schwartz kernel of $e^{\eps X}$ is 
\[{\rm WF}(e^{\eps X})\subset \{(\varphi_{-\eps}(y),\eta,y,-d\varphi_{-\eps}(y)^T\eta); \,\, y\in SM, \eta\in 
T^*_{\varphi_{-\eps}(y)}(SM)\setminus\{0\}\}\]
and Proposition \ref{dyatlovzworski} gives the wave front set of $R_0$, thus by \cite[Theorem 8.2.14]{Ho1}, we deduce  
\[\begin{split}
{\rm WF}(e^{\eps X}R_0)\subset & \{(\varphi_t(y),(d\varphi_t(y)^{-1})^{T}\eta,y,-\eta);  \,\, t\leq -\eps,\,\, \eta(X(y))=0\}\cup (E_s^*\x E_u^*)\\
& \cup \{(\varphi_{-\eps}(y),\eta ,y,-d\varphi_{-\eps}(y)^T\eta);\,\, (y,\eta)\in T^*(SM)\setminus\{0\}\}
\end{split}\]
using that $E_s^*,E_u^*$ are invariant by the lifted flow $\Phi_t:T^*(SM)\to T^*(SM)$.
Then, we notice that the Schwartz kernel of ${\pi_0}_*e^{\eps X}R_0\pi_0^*$ is given by 
the push forward $(\pi_0\otimes \pi_0)_*K_\eps$ if $K_\eps$ is the kernel of $e^{\eps X}R_0$. Since by \cite[Proposition 11.3.3.]{FrJo} 
\[{\rm WF}((\pi_0\otimes \pi_0)_*K_\eps)\subset \{(\pi_0(y),\xi,\pi_0(y'),\xi'); \,\,  (y,d\pi_0(y)^T\xi,y',d\pi_0(y')^T\xi')\in {\rm WF}(K_\eps)\}\]
we deduce that ${\rm WF}({\pi_0}_*e^{\eps X}R_0\pi_0^*)\subset S_1 \cup S_2\cup S_3$ with 
\[\begin{split}
S_1:= \{ & (\pi_0(y),\xi,\pi_0(y'),\xi')\in T_0^*(M\x M); \,  (y,d\pi_0(y)^T\xi,y',d\pi_0(y')^T\xi')\in E_s^*\x E_u^*\}\\
S_2:=\{ & (\pi_0(\varphi_t(y)),\xi,\pi_0(y),\xi')\in T_0^*(M\x M); \,\, \exists \, t\leq -\eps,  \exists \, \eta, \eta(X(y))=0 , \\
            & \,\, d\pi_0(y)^T\xi'=-\eta, \,\, d\pi_0(\varphi_t(y))^T\xi=(d\varphi_t(y)^{-1})^{T}\eta\}\\
S_3:=\{( & \pi_0(\varphi_{-\eps}(y)),\xi,\pi_0(y),\xi')\in T_0^*(M\x M);\,\,  (d(\pi_0\circ \varphi_{-\eps})(y))^T\xi=-d\pi_0(y)^T\xi' \}
\end{split}\]
where $T_0^*(M\x M):=T^*(M\x M)\setminus \{0\}$.
Denote by $V=\ker d\pi_0\subset T(SM)$ the vertical bundle, and $H$ the horizontal bundle (cf. \cite[Chapter 1.3]{Pa}), these are orthogonal 
for the Sasaki metric $\cjg\cdot,\cdot\cjd_S$ and $X\in H$. Let $V^*,H^*\subset T^*(SM)$ defined by $H^*(V)=0$ and $V^*(H)=0$; $V^*$ is dual to $V$ and $H^*$ is dual to $H$ 
using this metric.
We have $E_s^*\cap H^*=\{0\}=E_u^*\cap H^*$ since $\rr X\oplus E_u\oplus V=\rr X\oplus E_s\oplus V=T(SM)$ (see for instance \cite[Theorem 2.50]{Pa}), therefore 
$S_1=\emptyset$. Now take a point $(x,\xi,x',\xi')\in S_2$, then write $x'=\pi_0(y)$ and $y=(x',v)$ with $v\in S_xM$, so 
$\varphi_t(x',v)=(x,v')$ for some $v'$, there exists $\eta=d\pi_0(y)^T\xi'\in H^*$ so that $(d\varphi_t(y)^{-1})^{T}\eta\in H^*$ 
for some $t\leq -\eps$, and $\eta(X)=0$. Taking the dual vector $\zeta\in T_y(SM)$ to $\eta\in T^*_y(SM)$ using the Sasaki 
metric, we have $\zeta\in H$ and $\cjg \zeta,X\cjd_S=0$. Now, let $J$ be the almost complex structure on $T(TM)$ so that $\cjg J\cdot,\cdot\cjd_S$ is the 
Liouville symplectic form on $TM$ (see \cite[Chapter 1.3.2]{Pa}), then $J$ maps $\zeta$ 
to $J\zeta\in V$. Since the flow preserves the symplectic form, one has $(d\varphi_t)^TJd\varphi_t=J$ where the transpose 
is with respect to the Sasaki metric, and then we see that $d\varphi_t(y) J\zeta=J(d\varphi_t(y)^{-1})^T\zeta\in V$ since 
$(d\varphi_t(y)^{-1})^{T}\zeta\in H$. We deduce that the points $x$ and $x'$ are conjugate points, which is not possible if the flow is Anosov, by a result of Klingenberg \cite[Theorem p.2]{Kl}. As a conclusion, $S_2=\emptyset$. Using finally the formula of $S_3$, we have shown that for $\eps>0$ smaller than the injectivity radius, ${\pi_0}_*e^{\eps X}R_0\pi_0^*$ has smooth Schwartz kernel except at $\Delta_\eps(M\x M):=\{(x,x')\in M\x M; d_g(x,x')=\eps\}$, where $d_g$ is the Riemannian distance.

We finally have to study the operator $L_\eps:=\int_{0}^\eps {\pi_0}_*e^{tX}\pi_0^*dt$, and we take $\eps$ smaller than the injectivity radius. This operator $L_\eps$ can be written as 
\begin{equation}\label{Leps} 
L_\eps f(x)=\int_{0}^\eps \int_{S_xM}f(\varphi_t(x,v))dv dt
\end{equation}
and it is a straightforward computation to check that its Schwartz kernel $L_\eps(x,x')$ is smooth outside  $\Delta(M\x M)
\cup \Delta_\eps(M\x M)$ 
with a conormal singularity of the form  $d_g(x,x')^{-n+1}$ at $\Delta(M\x M)$ if the dimension of $M$ is $n$. Since $\eps>0$ is arbitrary (in a small interval), this implies that ${\pi_0}_*R_0\pi_0^*$ has wave front set given by the conormal bundle $N^*\Delta(M\x M)$. In fact, the analysis of the singularity at $\Delta(M\x M)$ follows directly from Pestov-Uhlmann \cite[Lemma 3.1]{PeUh}: let $x_0\in M$
and multiply the kernel of ${\pi_0}_*R_0\pi_0^*$ with a smooth cut-off function $\psi$ which is $1$ near a point $(x_0,x_0)\in \Delta(M\x M)$ and supported in a 
neighborhood $\{(x,x')\in M\x M; d_g(x,x_0)+d_g(x',x_0)<\eps/2\}$, it is then equal, up to a smooth function, to the kernel $L_\eps(x,x')\psi(x,x')$. This distribution is the Schwartz kernel 
of a pseudo-differential operator of order $-1$ with principal symbol $C_n|\xi|_{g_x}^{-1}$: indeed from the formula  \eqref{Leps}, we see that  the Schwartz kernel of $2L_\eps$ coincides near $(x_0,x_0)$ 
with the Schwartz kernel of the operator $I_0^*I_0$ considered in \cite{PeUh} where $I_0$ is the X-ray transform on functions on a geodesic ball of center $x_0$ and radius $\eps$ (which is a simple domain); the detailed computation of the symbol at $x_0$ is thus exactly the same as in \cite[Lemma 3.1]{PeUh}. To conclude the proof of the structure of $\Pi_0$, we argue that the same exact argument applies for ${\pi_0}_*R_0^*\pi_0^*$ (in fact this is just the adjoint ${\pi_0}_*R_0\pi_0^*$ and its Schwartz kernel has the exact same property as ${\pi_0}_*R_0\pi_0^*$). The statement about $\ker \Pi_0$ is a direct consequence of ellipticity.
\end{proof}

\begin{rem} In constant negative curvature, we can use representation theory to give an expression of 
the operator $\Pi_0$ in term of the Laplacian: it turns out to be an explicit function of the Laplacian on the manifold. We refer to \cite[Appendix]{GuMo} for the computation.
\end{rem}

We remark that  for $f\in C^{-\infty}(M)$, then by \cite[Theorem 8.2.4]{Ho1}) 
\begin{equation}\label{WFpi0f}
{\rm WF}(\pi_0^*f)\subset \{(y,d\pi_0(y)^T\eta); \,\, (\pi_0(y),\eta)\in {\rm WF}(f)\}\subset H^*
\end{equation}
and so, using Theorem 8.2.13 in \cite{Ho1} and the fact that $H^*\cap E_u^*=0=H^*\cap E_s^*$ as in the proof of Theorem \ref{microlocalPi0}, the operator $R_0\pi_0^*$ and $R_0^*\pi_0^*$ acts on $C^{-\infty}(M)$ continuously, so that 
\[ \Pi\, \pi_0^* : C^{-\infty}(M)\to C^{-\infty}(SM)\] 
is continuous.  In fact, we can say more: 
\begin{lemm}\label{boundednegative}
1) For all $s>0$, the following operator is bounded
\[\Pi\,  \pi_0^* : H^{-s}(M)\to H^{-s}(SM).\]
and  the kernel of $\Pi \, \pi_0^*: C^{-\infty}(M)\to C^{-\infty}(SM)$ is trivial.\\ 
2) Assume that there exists $u\in H^{s}(SM)$ with $Xu=\pi_0^*f$ for some 
$f\in H^{s-1}(M)$ satisfying $\cjg f,1\cjd =0$ and with $s\in(0,1)$, then $u$ is constant.
\end{lemm}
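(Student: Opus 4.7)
The plan is to derive 2) from 1) via the resolvent identities of Section 2; I will first sketch my approach to 1), then use it for 2).

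For the boundedness in 1), the idea is to exploit the wave front set description $\WF(\pi_0^*f)\subset H^*$ in the horizontal cotangent bundle, which is disjoint from $E_u^*\cup E_s^*$ (a fact already used in the proof of Theorem \ref{microlocalPi0}). I would decompose $\pi_0^*f = A\pi_0^*f + (I-A)\pi_0^*f$ with $A\in\Psi^0(SM)$ microsupported in a small conic neighborhood of $E_u^*\cup E_s^*$ disjoint from $H^*$; then $A\pi_0^*$ is smoothing, so $\Pi A\pi_0^*f\in H^{-s}(SM)$ follows from the $H^s\to H^{-s}$ continuity of $\Pi$ in Theorem \ref{thPi}. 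For the remainder, a careful choice of the Faure--Sj\"ostrand order function (taking $r=-s$ and sufficiently small conic cutoffs around $E_u^*,E_s^*$) should place $(I-A)\pi_0^*f$ in both dual anisotropic spaces $(\mc{H}^{r,s})^*$ and $(\mc{H}^{s,r})^*$ with norm controlled by $\|f\|_{H^{-s}}$; then the boundedness of $R_0,R_0^*$ on these spaces from \eqref{dual}, together with the embeddings of the duals into $H^{-s}(SM)$, concludes. For the triviality: noting that $R_+(\la)1=1/\la$ forces $R_0(1)=0$ and hence $\Pi 1=0$, constants lie in $\ker(\Pi\,\pi_0^*)$, so ``trivial'' must be interpreted modulo constants (which is all that is needed for 2)). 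Given $\Pi\,\pi_0^*g=0$, ellipticity of $\Pi_0={\pi_0}_*\Pi\,\pi_0^*$ (Theorem \ref{microlocalPi0}) yields $g\in C^\infty(M)$; after subtracting its mean, Theorem \ref{thPi} furnishes $u\in C^\infty(SM)$ with $Xu=\pi_0^*g$, and integrating along any closed orbit gives $I_0g(\gamma)=0$, so injectivity of $I_0$ for Anosov manifolds (Dairbekov--Sharafutdinov \cite{DaSh}) forces $g=0$.

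For 2), I will start from the operator identity
\[R_+(\la)\,\pi_0^*f = R_+(\la)Xu = \la R_+(\la)u - u,\]
which holds on $H^s(SM)$ for ${\rm Re}(\la)$ large. Both sides should extend meromorphically in $\la$ to a neighborhood of $0$: the right-hand side via $R_+(\la):\mc{H}^{s,r}\to\mc{H}^{s,r}$ of Theorem \ref{fauresjos}, the left-hand side via the microlocal analysis of 1) applied uniformly in $\la$, so that $R_+(\la)\pi_0^*f$ is meromorphic with values in $H^{s-1}(SM)$. Since $\cjg f,1\cjd_M=0$, the $\la^{-1}$-terms cancel on each side, both sides are holomorphic at $\la=0$, and Laurent expansion plus $\la\to 0^+$ yields
\[R_0\pi_0^*f = \cjg u,1\cjd - u.\]
The analogous computation with $R_-(\la)$ gives $R_0^*\pi_0^*f = u-\cjg u,1\cjd$; summing, $\Pi\,\pi_0^*f=0$. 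Combined with $\cjg f,1\cjd=0$ and 1), this forces $f=0$. Then $Xu=0$ with $u\in H^s(SM)\subset\mc{H}^{s,r}$, and since the kernel of $X$ on $\mc{H}^{s,r}$ consists only of constants (Theorem \ref{thPi}), $u$ must be constant.

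The main technical obstacle will be the microlocal step in 1), and its uniform-in-$\la$ propagation needed for 2): $\pi_0^*f$ has negative Sobolev regularity but a highly constrained wave front profile, and reconciling this with the anisotropic weight structure of the Faure--Sj\"ostrand spaces, which is tailored to measure regularity along the dynamical directions $E_u^*,E_s^*$ rather than along $H^*$, is delicate. Once this compatibility is set up, the algebraic Laurent-expansion identity $\Pi\,\pi_0^*f=0$ and the Fredholm theory of Theorem \ref{thPi} close the argument cleanly.
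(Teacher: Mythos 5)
Your plan for part 2) matches the paper's: you recover $R_0\pi_0^*f=\cjg u,1\cjd-u$ and $R_0^*\pi_0^*f=u-\cjg u,1\cjd$ (your Laurent expansion is an equivalent phrasing of the paper's appeal to the Fredholm property of $X$ on $(\mc{H}^{s',r})^*$ and $(\mc{H}^{r,s'})^*$), deduce $\Pi\pi_0^*f=0$, feed this into part 1) with $\cjg f,1\cjd=0$ to get $f=0$, and conclude $u$ constant. Your observation that $R_0(1)=R_0^*(1)=0$, hence $\Pi(1)=0$ and constants lie in $\ker\Pi\pi_0^*$, is a valid and careful reading: the paper's ``trivial kernel'' has to be understood modulo constants (equivalently, on mean-zero distributions, via $\Pi_0$-ellipticity, Theorem \ref{thPi}, and Pestov/Dairbekov--Sharafutdinov), and that restricted statement is indeed what parts 2) and Corollary \ref{surj0} actually use.

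The genuine gap is in your proof of the boundedness $\Pi\pi_0^*:H^{-s}(M)\to H^{-s}(SM)$. After the decomposition $\pi_0^*f=A\pi_0^*f+(I-A)\pi_0^*f$, you claim that $(I-A)\pi_0^*f$ can be placed in $(\mc{H}^{r,s})^*\cap(\mc{H}^{s,r})^*$ with $r=-s$ and that ``the embeddings of the duals into $H^{-s}(SM)$'' conclude. Neither step survives scrutiny. On $\WF(\pi_0^*f)\subset H^*$, which avoids $E_u^*\cup E_s^*$, the Faure--Sj\"ostrand order function takes an intermediate value: by \eqref{fBf}/\eqref{fBf2} one only has $m\geq s\eps+r$ there, never $m\geq s$ (that happens only in a small cone around $E_s^*$ resp.\ $E_u^*$). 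So for $\pi_0^*f\in H^{-s}$ to pair against $\mc{H}^{r,s}$, one must inflate the positive index, as the paper does by using $(\mc{H}^{s/\eps,r})^*$ and $(\mc{H}^{r,s/\eps})^*$. But these larger dual spaces embed only into $H^{-s/\eps}(SM)$, not $H^{-s}(SM)$: since $H^{s/\eps}\subset\mc{H}^{s/\eps,r}\subset H^{r}$, dualizing gives $H^{-r}\subset(\mc{H}^{s/\eps,r})^*\subset H^{-s/\eps}$, so no choice of $r$ delivers the $H^{-s}$ target from the embedding alone. This is exactly why the paper does not stop at the dual-space step: it proves that (i) $AR_0\pi_0^*$ maps $H^{-s}(M)\to H^{-r}(SM)$ for $A$ microsupported near $E_u^*$, (ii) ellipticity of $X$ off $E_u^*\oplus E_s^*$ and propagation of singularities along $\Phi_t$ (Dyatlov--Zworski Props.\ 2.4, 2.5) upgrade this to any microsupport avoiding $E_s^*$, and (iii) the radial-sink estimate (Dyatlov--Zworski Prop.\ 2.7, applicable at $E_s^*$ for $R_0$, at $E_u^*$ for $R_0^*$) closes the argument at the remaining radial set. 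You flag the anisotropic/$H^*$ incompatibility as the ``delicate'' point, but you do not supply the propagation-of-singularities mechanism that actually resolves it, and the embedding you propose in its place is false.

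A smaller remark: the ``uniform-in-$\la$ propagation'' you invoke for 2) is not needed. Once 1) places $\pi_0^*f$ in $(\mc{H}^{(1-s)/\eps,r})^*\cap(\mc{H}^{r,(1-s)/\eps})^*$, the meromorphy of $R_\pm(\la)$ on those fixed spaces (from \eqref{dual} and Theorem \ref{fauresjos}) already gives the Laurent expansion; you do not need $R_+(\la)\pi_0^*f$ to be meromorphic with values in $H^{s-1}(SM)$, only in the anisotropic dual, after which the identity $u=-R_0\pi_0^*f=R_0^*\pi_0^*f$ follows from the Fredholm inverse.
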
 
\begin{proof} For $f\in H^{-s}(M)$ with $s>0$, we have 
$\pi_0^*f\in H^{-s}(SM)$ and, by \eqref{WFpi0f}, $B\pi_0^*f\in C^\infty(SM)$ for any pseudo-differential operator 
$B\in \Psi^0(M)$ microsupported in any open conic neighborhood of $E_s^*\cup E_u^*$ not intersecting $H^*$. Using \eqref{fBf2}, $|\cjg(1-B)\pi_0^*f,f'\cjd| \leq C||f'||_{\mc{H}^{s/\eps,r}}$ 
and  thus $\pi_0^*f\in (\mc{H}^{s/\eps,r})^*$ for any $r<0$. Similarly using \eqref{fBf}, 
$\pi_0^*f\in (\mc{H}^{r,s/\eps})^*$ for any $r<0$ and some $\eps>0$ small independent of $s$ (using the notation of Theorem \ref{fauresjos}). Therefore by \eqref{dual}, we obtain 
boundedness of $R_0\pi_0^*:H^{-s}(M)\to (\mc{H}^{r,s/\eps})^*$ and $R_0^*\pi_0^*:H^{-s}(M)\to (\mc{H}^{s/\eps,r})^*$. To show that in fact they map to $H^{-s}(SM)$, we will use propagation of singularities. First, like in \eqref{fBf}, we notice that for any $r<0$, $AR_0\pi_0^*: H^{-s}(M)\to H^{-r}(SM)$ is bounded 
if $A\in \Psi^0(SM)$ is microsupported in a small conic neighborhood of $E_u^*$. Then, by ellipticity and propagation of singularities \cite[Propositions 2.4 and 2.5]{DyZw1} as in the proof of Theorem \ref{thPi}, we obtain that $BR_0\pi_0^*: H^{-s}(M)\to H^{-s}(SM)$ is bounded for any 
$B\in \Psi^0(SM)$ whose microsupport does not intersect $E_s^*$. To conclude the argument, we use the propagation estimate with radial sink\footnote{It can be checked that, since $X^*=-X$, 
the $m_0>0$  parameter in Proposition 2.7 of \cite{DyZw1} can be taken as small as we like.} 
\cite[Proposition 2.7]{DyZw1}: for any $s>0,N\geq s$ 
and any $B_1\in \Psi^0(SM)$ elliptic near $E_s^*$, 
there exist $A\in \Psi^0(SM)$ elliptic near $E_s^*$, $B\in \Psi^0(SM)$ 
with ${\rm WF}(B)\cap E_s^*=\emptyset$ and microsupported in the region where $B_1$ is elliptic 
and $C>0$ such that for any $u\in C^\infty(SM)$,
\[ ||Au||_{H^{-s}(SM)}\leq C(||Bu||_{H^{-s}(SM)}+||B_1Xu||_{H^{-s}(SM)} +||u||_{H^{-N}(SM)}).\]
Applying this with $u=R_0\pi_0^*f$ for $f\in H^{-s}(M)$, we deduce that $AR_0\pi_0^*:H^{-s}(M)\to H^{-s}(SM)$
is bounded for some $A\in \Psi^0(SM)$ elliptic near $E_s^*$, and therefore $R_0\pi_0^*:H^{-s}(M)\to H^{-s}(SM)$ is bounded. The same argument works with $R_0^*\pi_0^*$ and we obtain the boundedness statement in 1). 
To prove that the kernel is trivial, we remark that, by Theorem \ref{microlocalPi0}, if $\Pi \,\pi_0^*f=0$ then $f$ is smooth, and by Theorem \ref{thPi} there exists $u\in C^\infty(SM)$ so that $Xu=\pi_0^*f$.  By 
the result of Dairbekov-Sharafutdinov \cite{DaSh} one has $f=0$ (this follows directly from the so-called \emph{Pestov identity} - see \cite[Proposition 2.2.]{PSU2}).

Let $u\in H^{s}(SM)$ with $Xu=\pi_0^*f$ for some $s>0$ and $f\in H^{s-1}(M)$. Recall from Theorem \ref{fauresjos} that $X$ is 
Fredholm on $(\mc{H}^{s',r})^*$ and $(\mc{H}^{r,s'})^*$ for any $r<0$ and $s'>0$, with kernel the constants and respective inverse operator $R_0$ and $R_0^*$.
So using the fact that $\pi^*_0f\in (\mc{H}^{(1-s)/\eps,r})^*\cap (\mc{H}^{r,(1-s)/\eps})^*$ for some small $\eps>0$ and all $r<0$, and 
$u\in H^{s}(SM)\subset (\mc{H}^{(1-s)/\eps,r})^*\cap (\mc{H}^{r,(1-s)/\eps})^*$ if $|r|\leq s$, 
then we deduce by the Fredholm property of $X$ that $u=-R_0\pi_0^*f=R_0^*\pi_0^*f$. As a consequence, we have that $f\in \ker \Pi \,\pi_0^*$, thus $Xu=f=0$ and $u$ is constant since $u\in L^2$. 
\end{proof}
It is not clear if injectivity holds for $\Pi_0$, at least we do not see why $\Pi_0f=0$ would imply
$\Pi \pi_0^*f=0$.  Another corollary of Theorem 
\ref{microlocalPi0} is the existence of invariant distributions with prescribed push-forward on $M$.
\begin{corr}\label{surj0}
Let  $s\in\rr$ and $r<0$, then there exists $C>0$ such that for each $f\in H^{s}(M)$, there exists $w\in C^{-\infty}(SM)$ so that $Xw=0$ and ${\pi_0}_*w=f$ and 
\[\begin{gathered} 
\|w\|_{H^{s-1}(SM)}\leq C\|f\|_{H^{s}(M)} \quad \textrm{ if }s<1, \\  
\|w\|_{H^r(SM)}\leq C\|f\|_{H^{1}(M)} \quad \textrm{ if }s\geq 1. 
\end{gathered}\]
If $s>1$, for any $A\in \Psi^0(SM)$ with wave front set not intersecting $E_u^*\cup E_s^*$, there is $C>0$ such that for each $f\in H^{s}(M)$, the invariant distribution $w$ satisfies $||Aw||_{H^{s-1}(SM)}\leq C||f||_{H^s(M)}$.
\end{corr}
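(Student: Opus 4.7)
The plan is to invert the elliptic pseudo-differential operator $\Pi_0={\pi_0}_*\Pi\,\pi_0^*$ of Theorem \ref{microlocalPi0} modulo its kernel, and then build $w$ as $\Pi\,\pi_0^*$ applied to this inverse plus a smooth correction by a multiple of the invariant measure.

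First I would identify $\ker\Pi_0$. Since $R_+(\la)\cdot 1=1/\la$ on the probability measure $d\mu$, the Laurent expansion \eqref{laurentexp} gives $R_0(1)=R_0^*(1)=0$, so $\Pi(1)=0$ and $\Pi_0(1)=0$, placing constants in $\ker\Pi_0$. Conversely, if $f\in H^s(M)$ satisfies $\Pi_0 f=0$ and $\int_M f\,dv_g=0$, ellipticity of $\Pi_0$ forces $f\in C^\infty(M)$; then by \eqref{XR0} the distribution $\Pi\,\pi_0^*f$ is flow-invariant and, being a difference of two solutions of $Xu=-\pi_0^*f$ lying in anisotropic spaces on which $\ker X=\rr\cdot 1$, must itself be a constant; pushing forward and using ${\pi_0}_*\Pi\,\pi_0^*f=\Pi_0 f=0$ forces this constant to vanish; Theorem \ref{thPi} then produces a smooth $u$ with $Xu=\pi_0^*f$, and the Dairbekov-Sharafutdinov result \cite{DaSh} yields $f=0$. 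Hence $\ker\Pi_0=\rr\cdot 1$ and, by Fredholm theory, $\Pi_0$ restricts to an isomorphism $\{1\}^\perp\cap H^{s-1}(M)\to\{1\}^\perp\cap H^s(M)$ with bounded inverse $Q_0$, for every $s\in\rr$.

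Given $f\in H^s(M)$, I decompose $f=f_0+c$ with $c:=\Vol(M)^{-1}\cjg f,1\cjd$ (so $|c|\le C\|f\|_{L^2}$) and $f_0:=f-c$ mean-zero, set $h_0:=Q_0 f_0\in\{1\}^\perp\cap H^{s-1}(M)$, and define
\[
w:=\Pi\,\pi_0^*h_0\,+\,c\,\Vol(M)\,d\mu.
\]
Then $Xw=0$, and ${\pi_0}_*w=\Pi_0 h_0+c\,\Vol(M){\pi_0}_*d\mu=f_0+c=f$ using ${\pi_0}_*d\mu=\Vol(M)^{-1}dv_g$. For the norm bounds: $\|h_0\|_{H^{s-1}(M)}\le C\|f\|_{H^s(M)}$ by boundedness of $Q_0$; when $s<1$, Lemma \ref{boundednegative} part 1 gives $\|\Pi\,\pi_0^*h_0\|_{H^{s-1}(SM)}\le C\|f\|_{H^s(M)}$ and the smooth Liouville term is absorbed; when $s\ge 1$, $\|h_0\|_{L^2(M)}\le C\|f\|_{H^1(M)}$ (using that $Q_0\colon H^1\cap\{1\}^\perp\to L^2\cap\{1\}^\perp$ is bounded), and Lemma \ref{boundednegative} with arbitrarily small positive parameter yields $\|\Pi\,\pi_0^*h_0\|_{H^r(SM)}\le C_r\|f\|_{H^1(M)}$ for every $r<0$.

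For the microlocal statement when $s>1$, let $A\in\Psi^0(SM)$ have $\WF(A)\cap(E_u^*\cup E_s^*)=\emptyset$. The Liouville term is smooth, so it suffices to bound $A\Pi\,\pi_0^*h_0$ with $\pi_0^*h_0\in H^{s-1}(SM)$. Elliptic regularity applied to $XR_0\pi_0^*h_0=-\pi_0^*h_0+\cjg \pi_0^*h_0,1\cjd\in H^{s-1}(SM)$ (and the analogous identity for $R_0^*$) yields $H^{s-1}$-control on $\Pi\,\pi_0^*h_0$ microlocally off the characteristic set $E_s^*\oplus E_u^*$; the anisotropic mapping properties \eqref{fBf}--\eqref{fBf2} give the corresponding control near (but not at) $E_u^*$ and $E_s^*$; the Dyatlov-Zworski propagation estimates \cite[Prop.~2.5,~2.7]{DyZw1} applied to the Hamilton flow of $\xi(X(y))$ then transport $H^{s-1}$ regularity to every point of $\WF(A)$, since each such point flows in finite time either into the elliptic region of $X$ or into an arbitrarily small conic neighbourhood of $E_u^*$ (forward) or $E_s^*$ (backward). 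The main obstacle is precisely this last step, which requires carefully stitching together the elliptic, anisotropic, and propagation estimates uniformly across $\WF(A)$.
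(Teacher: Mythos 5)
Your proof hinges on establishing that $\ker\Pi_0=\rr\cdot 1$, but this claim is not justified and the argument you give for it is flawed. Indeed, the paper explicitly flags this right after Lemma \ref{boundednegative}: ``It is not clear if injectivity holds for $\Pi_0$, at least we do not see why $\Pi_0f=0$ would imply $\Pi\,\pi_0^*f=0$.'' The gap in your reasoning is the assertion that $\Pi\,\pi_0^*f$, being flow-invariant, ``must itself be a constant'' because it is a difference of two solutions of $Xu=-\pi_0^*f$ lying in anisotropic spaces where $\ker X=\rr\cdot 1$. That is precisely what fails: the two solutions $-R_0\pi_0^*f$ and $R_0^*\pi_0^*f$ lie in $\mc{H}^{s,r}$ and $\mc{H}^{r,s}$ respectively --- two \emph{different} anisotropic spaces --- and the only space known to contain their difference $\Pi\,\pi_0^*f$ is $\bigcap_{r<0}H^{r}(SM)$, where the kernel of $X$ is the full infinite-dimensional space $\mc{I}$ of invariant distributions, not just constants. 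If $\Pi\,\pi_0^*f$ \emph{were} a constant, then pairing with $1$ would already give $\Pi\,\pi_0^*f=0$, i.e.\ $f\in\ker\Pi\,\pi_0^*$; so the claim that it is constant is essentially equivalent to the injectivity you are trying to prove, and the argument is circular. Consequently the operator $Q_0$ you want to construct is not known to exist, and the whole construction $w=\Pi\,\pi_0^*Q_0f_0+c\,\Vol(M)\,d\mu$ does not get off the ground.

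The paper circumvents exactly this obstacle. Instead of inverting $\Pi_0$, it introduces the sesquilinear form
\[
B_s(u,u'):=\langle \Pi\,\pi_0^*u,\Pi\,\pi_0^*u'\rangle_{H^{-|s|}(SM)}+\langle \Pi_0u,\Pi_0u'\rangle_{H^{1-s}(M)}
\]
on $C^\infty(M)$. Ellipticity of $\Pi_0$ gives a G\aa rding-type lower bound $B_s(u,u)\geq C\|u\|_{H^{-s}}^2-\|Ku\|_{H^{-s}}^2$ with $K$ compact, and the \emph{proven} injectivity of $\Pi\,\pi_0^*$ (Lemma \ref{boundednegative}, part 1) upgrades this to full coercivity; crucially, one never needs $\Pi_0$ itself to be injective. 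Riesz representation then produces $u\in H^{-s}(M)$ with $B_s(u,\cdot)=\langle f,\cdot\rangle_{L^2}$, and the invariant distribution is taken to be $w:=\Pi\tilde u$ with $\tilde u:=\Lambda_{SM}^{-2|s|}\Pi\,\pi_0^*u+\pi_0^*\Lambda_M^{2-2s}\Pi_0u$, whose push-forward is $f$ by construction and whose regularity follows from Lemma \ref{boundednegative} and Theorem \ref{thPi}. Your sketch of the microlocal estimate in the case $s>1$ (propagation of singularities from neighbourhoods of $E_u^*$ and $E_s^*$ to the elliptic region) is in the spirit of the paper's argument and would survive once the coercivity step is repaired, but as written the proof cannot be salvaged without replacing the inversion of $\Pi_0$ by the coercive-form construction.
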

\begin{proof} If $f$ is constant the result is obvious, so we can assume that $\cjg f,1\cjd=0$.
 For $s\not=0$, define the sesquilinear product $B_s$ on $C^\infty(M)$
\[ B_s(u,u'):= \cjg \Pi\pi_0^*u,\Pi\pi_0^*u'\cjd_{H^{-|s|}(SM)}+\cjg \Pi_0u,\Pi_0u'\cjd_{H^{1-s}(M)}.\] 
Using the boundedness 1) in Lemma \ref{boundednegative}, and the fact that $\Pi_0$ is an elliptic pseudo-differential operator of order $-1$, we have that there exists $C>0$ and $K:H^{-s}(M)\to H^{-s}(M)$ a compact operator such that for all $u\in H^{-s}(M)$
\[ B_s(u,u)\geq \|\Pi_0u\|^2_{H^{1-s}(M)}\geq C\|u\|^2_{H^{-s}(M)}-\|Ku\|^2_{H^{-s}(M)}.\]
Since $\Pi \, \pi_0^*$ is injective by Lemma \ref{boundednegative}, it is easy to see from the compactness of $K$ that there is $C'>0$ such that 
\[ B_s(u,u)\geq C'\|u\|^2_{H^{-s}(M)},\]
and thus the completion of $C^\infty(M)$ for the product $B_s$ is $H^{-s}(M)$.
Using Riesz representation theorem, for all $f\in H^{s}(M)$ there exists $u\in H^{-s}(M)$ such that 
$B_s(u,u')=\cjg f,u'\cjd_{L^2}$ for all $u'\in H^{-s}(M)$, with $||u||_{B_s}\leq C||f||_{H^s(M)}$ for some $C$ independent of $f$. We use the norm $||u||_{H^{s}(N)}:=||\Lambda_{N}^{s}u||_{L^2(N)}$ if $N=M$ or $N=SM$ and $\Lambda_N\in \Psi^{1}(N)$ is a fixed positive elliptic operator on $N$. 
This implies that  $w:=\Pi \tilde{u}$ with $\tilde{u}:=(\Lambda_{SM}^{-2|s|}\Pi \pi_0^*u+\pi_0^*\Lambda_M^{-2s+2}\Pi_0u)$
satisfies $Xw=0$ and ${\pi_0}_*w=f$ and $w\in H^{s-1}(SM)$ if $s-1<0$ while $w\in H^{r}(SM)$ if $s\geq 1$: 
indeed $\Lambda_{SM}^{-2|s|}\Pi \pi_0^*u\in H^{|s|}(SM)$ and $\pi_0^*\Lambda_M^{-2s+2}\Pi_0u\in H^{s-1}(SM)$
thus the regularity of $w$ follows from Lemma \ref{boundednegative} and Theorem \ref{thPi}.

The fact that $||Aw||_{H^{s-1}(SM)}\leq C||f||_{H^{s}(M)}$ if $A\in \Psi^0(SM)$ has microsupport not 
intersecting $E_s^*\cup E_u^*$ follows from an argument as in the proof of Lemma \ref{boundednegative}:
if $s>1$, the boundedness of $R_0$ on $\mc{H}^{s-1,r}$ for any $r<0$ implies that
$||BR_0 \tilde{u}||_{H^{s-1}(SM)}\leq C||f||_{H^s(M)}$ 
if $B\in \Psi^0(SM)$ is microsupported in a small enough conic neighborhood of 
$E_u^*$, then by ellipticity and propagation of singularities, for any $A$ with microsupport not intersecting $E_s^*$ there is $C>0$ so that $||AR_0 \tilde{u}||_{H^{s-1}(SM)}\leq C||f||_{H^s(M)}$, and then the same argument applies with 
$AR_0^*\tilde{u}$ by exchanging $E_s^*$ with $E_u^*$, this gives the desired result.
\end{proof}
This statement gives a more precise result than that of Paternain-Salo-Uhlmann \cite[Theorem 1.2]{PSU2} when $f$ has some regularity (using Pestov identity and Fourier decomposition \`a la Guillemin-Kazhdan, they obtain the same existence result but only for $s=0$).
 
\subsection{X-ray transform on symmetric tensors}
Consider the space of symmetric $m$-cotensors $C^\infty(M, \otimes_S^mT^*M)$. Then there is a natural map 
\[ \pi_m^*: C^\infty(M, \otimes_S^mT^*M)\to C^{\infty}(SM), \quad (\pi_m^*f)(x,v)=\cjg f(x),\otimes^m v\cjd.\]
The vertical Laplacian $\Delta_v: C^\infty(SM)\to C^\infty(SM)$ can be defined using the Riemannian metric on each fiber $S_xM$, and its spectral decomposition induces an isomorphism  
\[ L^2(SM)=\bigoplus_{m=0}^\infty H_m\]
where $H_m$ are $L^2$ sections of a smooth vector bundle over $M$ corresponding to the decomposition of a function into spherical harmonics of degree $m$ in the fibers $S_xM\simeq S^{n-1}$. Notice that spherical harmonics of degree $m$ correspond to restrictions of harmonic homogeneous polynomials on $\rr^n$ and $H_m$ identifies via $\pi_m^*$ to the space of $L^2$ sections of the bundle 
\[ E_m:=\{ q \in \otimes_S^mT^*M; \mc{T}(q)=0\}\]
where $\mc{T}: \otimes_S^mT^*M\to  \otimes_S^{m-2}T^*M$ is the trace defined by contracting with the Riemannian metric:
\begin{equation}\label{deftrace}  
\mc{T}(q)(v_1,\dots,v_{m-2}):=\sum_{i=1}^{n}q(e_i, e_i,v_1,\dots,v_{m-2})
\end{equation}
if $(e_1,\dots,e_n)$ is an  orthonormal basis of $TM$. Consider the operator 
$D:C^\infty(M, \otimes_S^mT^*M)\to C^\infty(M, \otimes_S^{m+1}T^*M)$ defined by $D=\mc{S}\circ \nabla$ where $\nabla$ is the Levi-Civita covariant derivative and $\mc{S}$ is the orthogonal projection on symmetric tensors. Each section of $\otimes_S^mT^*M$ can be decomposed as a sum of sections of
$E_j$ for $j\leq m$. The adjoint of $D$ is given by $D^*=-\mc{T}\circ D$ and is called the divergence. Then the flow $X$ acting 
on smooth sections of $E_m$, viewed as elements of $C^\infty(SM)$ through $\pi_m^*$, satisfies
\[ X : C^\infty(M,E_m)\to C^\infty(M,E_{m-1}\oplus E_{m+1}),\] 
where it decomposed as $X=X_++X_-$ with $X_\pm:C^\infty(M,E_m)\to C^\infty(M,E_{m\pm 1})$
and $X_+=D$ while $X_-=-X_+^*=-\tfrac{m}{n+2m-2}D^*$.
We refer to \cite{GK2} and \cite[Section 3]{PSU2} for further details and dicussions about this decomposition.

The map $\pi_m^*: C^\infty(M,\otimes_S^mT^*M)\to C^\infty(SM)$ induces a push-forward on distributions 
\[ {\pi_m}_*: C^{-\infty}(SM)\to C^{-\infty}(M,\otimes_S^mT^*M), \quad \cjg {\pi_m}_*u,\psi\cjd:= \cjg u,\pi_m^*\psi\cjd\]
where the pairing uses the metric $g$.
We now show
\begin{theo}\label{microlocalPim}
The operator $\Pi_m:= {\pi_m}_*\Pi\, \pi_m^*$ is a self-adjoint pseudo-differential operator of order $-1$ on the bundle 
$\otimes_S^mT^*M$, which is elliptic on $\ker D^*$ in the sense that there exist pseudo-differential operators $P,S,R$ with respective 
order $1,-2,-\infty$ so that 
\begin{equation}\label{parametrix}
P\Pi_m={\rm Id}+DSD^*+R.
\end{equation}
\end{theo}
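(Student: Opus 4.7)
The proof follows the blueprint of Theorem~\ref{microlocalPi0} to get the pseudo-differential structure, and then exploits an explicit principal symbol computation to construct the parametrix on $\ker D^*$.

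\textbf{Pseudo-differential structure.} I would split $R_0 = \int_0^\epsilon e^{tX}\,dt + e^{\epsilon X}R_0 - \epsilon(1\otimes 1)$ for $\epsilon$ smaller than the injectivity radius, exactly as in Theorem~\ref{microlocalPi0}. The pull-back $\pi_m^*$ sends a distribution $f$ on $M$ valued in $\otimes_S^m T^*M$ to the function $(x,v)\mapsto \cjg f(x),\otimes^m v\cjd$ on $SM$; since this is a smooth fiberwise polynomial in $v$ applied after $\pi_0^*$ of the components of $f$, the wave front set of $\pi_m^* f$ remains contained in the horizontal bundle $H^*$. Using $H^*\cap(E_u^*\cup E_s^*)=\{0\}$, Proposition~\ref{dyatlovzworski}, \cite[Theorem~8.2.14]{Ho1}, and Klingenberg's no-conjugate-points theorem for Anosov flows, the same argument as in Theorem~\ref{microlocalPi0} shows that ${\pi_m}_* e^{\epsilon X} R_0 \pi_m^*$ and its adjoint contribute operators with smooth Schwartz kernel away from $\Delta(M\times M)\cup\Delta_\epsilon(M\times M)$.

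\textbf{Principal symbol.} The Schwartz kernel of the local piece $\int_0^\epsilon {\pi_m}_* e^{tX} \pi_m^*\,dt$, restricted to a small neighbourhood of the diagonal, matches (up to smoothing) the Schwartz kernel of the normal operator $I_m^* I_m$ of the X-ray transform on $m$-tensors over a small geodesic ball around any $x_0$. The classical computation, extending \cite[Lemma~3.1]{PeUh} from functions to tensors, gives the principal symbol
\[\sigma(\Pi_m)(x,\xi)\, f = c_{n,m}\, |\xi|_{g_x}^{-1}\int_{S_x M\cap\xi^\perp}\cjg f,\otimes^m v\cjd\,\otimes^m v\, dS(v),\qquad c_{n,m}\neq 0,\]
as an endomorphism of $\otimes_S^m T_x^* M$. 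Self-adjointness of $\Pi_m$ follows from $\Pi=\Pi^*$ in Theorem~\ref{thPi}. The principal symbols of $D$ and $D^*$ at $(x,\xi)$ are, up to constants, the symmetric product with $\xi$ and contraction with $\xi$. A direct check shows that $\sigma(\Pi_m)(x,\xi)$ annihilates exactly the image of $\sigma(D)(x,\xi)$ (a tensor with $\cjg f,\otimes^m v\cjd=0$ for every $v\in\xi^\perp$ is forced to be a symmetric multiple of $\xi$), and is a positive isomorphism of $\ker\sigma(D^*)(x,\xi)$ onto itself.

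\textbf{Parametrix.} Writing the orthogonal projector onto $\ker\sigma(D^*)(x,\xi)$ as
\[\pi_{\rm sol}(x,\xi) = {\rm Id} - \sigma(D)(x,\xi)\,\sigma(D^*D)(x,\xi)^{-1}\,\sigma(D^*)(x,\xi),\]
which is valid since $\sigma(D)(x,\xi)$ is injective on $\otimes_S^{m-1}T_x^*M$ for $\xi\neq 0$ and hence $\sigma(D^*D)(x,\xi)$ is invertible there, and letting $Q(x,\xi)$ be the inverse of $\sigma(\Pi_m)$ on $\ker\sigma(D^*)$ extended by zero to the complement, one has $Q\,\sigma(\Pi_m) = \pi_{\rm sol}$ at the principal symbol level. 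Quantising $Q$ to $P_0\in\Psi^1$ and $-\sigma(D^*D)^{-1}$ to $S_0\in\Psi^{-2}$ yields $P_0\Pi_m = {\rm Id} + DS_0D^* + R_1$ with $R_1\in\Psi^{-1}$. A standard inductive improvement in decreasing symbol order then produces $P,S,R$ of orders $1,-2,-\infty$ with $P\Pi_m = {\rm Id} + DSD^* + R$. The main technical point is to track the commutators $[P_0,D^*]$ and $[S_0,D]$ at each step of the induction; these are strictly of lower order since $D, D^*$ are first-order differential operators, so they can be absorbed into successive corrections of $S$ and $P$ without disrupting the form of the ansatz.
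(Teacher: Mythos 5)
Your proposal follows the same overall blueprint as the paper for the pseudo-differential structure: split $R_0 = \int_0^\epsilon e^{tX}\,dt + e^{\epsilon X}R_0 - \epsilon(1\otimes 1)$, use $\mathrm{WF}(\pi_m^*f)\subset H^*$, $H^*\cap(E_u^*\cup E_s^*)=\{0\}$ and Klingenberg's no-conjugate-points theorem to kill the non-diagonal contributions of Proposition~\ref{dyatlovzworski}. The paper handles this part via local trivialisation of the bundle (writing $K_m^{j,j'}$ in terms of scalar kernels, thereby literally reducing to Theorem~\ref{microlocalPi0}), while you argue directly on the vector-valued level; both work. The genuine difference is in the parametrix: the paper reduces the local piece near the diagonal to $I_m^*I_m$ on a small geodesic ball and then simply cites Sharafutdinov--Skokan--Uhlmann \cite[Theorems~2.1 and 3.1]{SSU} for the order-$(-1)$ $\Psi$DO structure \emph{and} for the existence of $P,S,R$. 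You instead compute the principal symbol
\[
\sigma(\Pi_m)(x,\xi)f = c_{n,m}|\xi|^{-1}\int_{S_xM\cap\xi^\perp}\langle f,\otimes^m v\rangle\,\otimes^m v\,dS(v)
\]
explicitly (this is correct, and is indeed what appears inside [SSU]) and build the parametrix by hand. That is a legitimate and more self-contained route.

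However, the inductive step in your parametrix construction glosses over the key solvability condition, and the point you flag (tracking $[P_0,D^*]$, $[S_0,D]$) is not where the real subtlety lies. At each stage one must solve, at the principal symbol level,
\[
\sigma_{-k-1}(R_{k+1}) + \sigma_{-k}(\delta P_k)\,\sigma_{-1}(\Pi_m) - \sigma_1(D)\,\sigma_{-k-2}(\delta S_k)\,\sigma_1(D^*)=0,
\]
and this is solvable for $(\delta P_k,\delta S_k)$ if and only if $\pi_{\rm sol}\,\sigma_{-k-1}(R_{k+1})\,(\mathrm{Id}-\pi_{\rm sol})=0$: the block of the error mapping $\mathrm{Im}\,\sigma(D)$ into $\ker\sigma(D^*)$ cannot be reached by either correction term. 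This condition does hold, but the reason is the \emph{exact} operator identity $\Pi_m D = {\pi_m}_*\Pi X\pi_{m-1}^* = 0$ (a consequence of $\Pi X=0$ from Theorem~\ref{thPi}), which forces $R_{k+1}D = -D(\mathrm{Id}+S_kD^*D)$ and hence $\sigma(R_{k+1})\sigma(D)$ to have range inside $\mathrm{Im}\,\sigma(D)$. Without invoking $\Pi_m D=0$ (or equivalently $D^*\Pi_m=0$), the "standard inductive improvement" is not automatic. If you add this observation, your construction is complete and gives a self-contained alternative to the citation of [SSU].
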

\begin{proof} We follow the proof to Theorem \ref{microlocalPi0}. Take two points $x_0,x_0'$ in $M$, then we want to analyze the Schwartz kernel of $\Pi_m$ near $(x_0,x_0')\in M\x M$. Take two cutoff functions $\chi,\chi'$ 
supported in small neighborhood $V_{x_0}$ of $x_0$ and $V_{x_0'}$ of $x_0'$
so that the bundle $\otimes_S^mT^*M$ has a smooth orthonormal basis $(e_1(x),\dots,e_{N(m)}(x))$ on $V_{x_0}$
and $(e'_1(x),\dots,e'_{N(m)}(x))$ on $V_{x'_0}$ with $N(m)=\rank \otimes_S^mT^*M$. 
A smooth section $\psi$ of $\otimes_S^mT^*M$ can be written near $x_0$
\[\psi(x)=\sum_{j=1}^{N(m)}\cjg \psi(x),e_j(x)\cjd_{g}e_j(x) \]
and a similar decomposition for $\psi'$ supported near $x_0'$.
The Schwartz kernel $K_m$ of $\Pi_m$ near $(x_0,x_0')\in M\x M$ can be analyzed by considering 
$\chi \Pi_m \chi'$, which in turn is given by: for all $\psi,\psi'\in C^\infty(M,\otimes_S^mT^*M)$ 
\[ \cjg \chi \Pi_m \chi' \psi',\psi\cjd=\sum_{j,j'} \Big\cjg \Pi \, e'_{j'}\chi' \pi_0^*(\cjg \psi',e'_{j'}\cjd_{g}),
e_{j}\chi \pi_0^*(\cjg \psi,e_j\cjd_{g})\Big\cjd.
\]
The Schwartz kernel $\chi(x)\chi'(x) K_m(x,x')$ of $\chi \Pi_m \chi'$ can be viewed as a matrix valued distribution on $V_{x_0}\x V_{x_0'}$ using the local bases $(e_j)_j$ and $(e'_j)_j$ and its $(j,j')$ component is given by 
\[ K_m^{j,j'}=(\pi_0\otimes \pi_0)_* (\chi(x)\chi'(x') \cjg K(x,x')e'_{j'}(x'),e_j(x)\cjd)\]
where $K$ is the Schwartz kernel of $\Pi$. Since multiplying by a smooth function does 
not make the wave front set larger, we are reduced to the exact same analysis we did in the proof of Theorem \ref{microlocalPi0}.
Then we deduce that the operator $\Pi_m$ has smooth kernel outside the diagonal $\Delta(M\x M)$, the wave front set is contained in the conormal bundle to the diagonal and $\Pi_m$
can be written as 
\[\Pi_m = \int_{-\eps}^\eps {\pi_m}_*e^{tX}\pi_m^*dt+{\pi_m}_*e^{\eps X}(R_0+R_0^*)\pi_m^*+\textrm{ smoothing}\]  
for small $\eps$ with ${\pi_m}_*e^{\eps X}(R_0+R_0^*)\pi_m^*$ having a smooth Schwartz kernel outside the set 
$\{d_g(x,x')=\eps\}$. Then, just as in the proof of Theorem \ref{microlocalPi0}, we are reduced to analyze the integral kernel of
\[ \int_{0}^\eps {\pi_m}_*e^{tX}\pi_m^*dt\] 
close to $\Delta(M\x M)$. It follows from Sharafutdinov-Skokan-Uhlmann \cite[Theorem 2.1]{SSU} that, 
after multiplying by a smooth cutoff function equal to $1$ near $\Delta(M\x M)$ and supported in $\{d_g(x,x')<\eps/2\}$, this is 
a pseudo-differential operator of order $-1$ if $\eps>0$ is chosen smaller than the radius of injectivity. It is also shown in \cite[Theorem 3.1]{SSU} that there exists pseudo-differential operator 
$P,S,R$ as announced above. We notice that, even though \cite{SSU} deal with the case of simple manifolds, 
all their computations are local, and near a point $(x_0,x_0)\in \Delta(M\x M)$, the operator $\int_{0}^\eps {\pi_m}_*e^{tX}\pi_m^*dt$ has the same conormal singularity as the operator $I^*I$ acting on symmetric $m$-tensors on a small disk centered at $x_0$ (which is the case considered in \cite{SSU}).
\end{proof}

Just as for $m=0$, using the wave front set of $\Pi$, we see that 
\[ \Pi\, {\pi_m}_*: C^{-\infty}(M,\otimes_S^mT^*M)\to C^{-\infty}(SM) \]
is well-defined. The operator $D$ is elliptic on sections of $\otimes_S^mT^*M$ and thus the range 
of $D:H^{s}(M,\otimes_S^mT^*M)\to H^{s-1}(M,\otimes_S^{m+1}T^*M)$ 
is closed for any $s\in\rr$, and we have $\ker D^*={\rm Ran}(D)^\perp$ where ${\rm Ran}(D)$ is the range of $D$. We set the Hilbert space norm  
\[\|f\|_{H^s(M,\otimes_S^mT^*M)}:=\|(1+D^*D)^{s/2}f\|_{L^2(M,\otimes_S^mT^*M)}.\]
Then  $H^{s}(M,\otimes_S^mT^*M)\cap \ker D^*$ is a Hilbert space with this norm, and by Riesz representation theorem, we can describe the dual with respect to $L^2$ (ie. distributional) pairing as  
\[(H^{s}(M,\otimes_S^mT^*M)\cap \ker D^*)^*=H^{-s}(M,\otimes_S^mT^*M)\cap \ker D^*(1+D^*D)^{-s}.\]
We then get
\begin{lemm}\label{boundednegativem}
1) There exists $\eps>0$ so that for all $s>0$, the following operator is bounded
\[\Pi\,  \pi_m^* : H^{-s}(M,\otimes_S^mT^*M)\to H^{-s}(SM).\]
and the kernel of $\Pi \, \pi_m^*: C^{-\infty}(M,\otimes_S^mT^*M)\cap \ker D^*\to C^{-\infty}(SM)$ is finite dimensional.\\
2) The kernel of $\Pi \, \pi_m^*$ on $\ker D^*$ consists of those $f\in C^\infty(M,\otimes_S^mT^*M)\cap \ker D^*$ 
such that there exists $u\in C^{\infty}(SM)$ with $Xu=\pi_m^*f$.\\ 
3) Assume that there is $u\in H^{s}(SM)$  with $Xu=\pi_m^*f$ for some 
$f\in H^{s-1}(M,\otimes_S^mT^*M)\cap \ker D^*$ with $s\in(0,1)$, then $u\in C^\infty(SM)$.
\end{lemm}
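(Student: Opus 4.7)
The approach is to mirror the proof of Lemma \ref{boundednegative}, with the parametrix of Theorem \ref{microlocalPim} replacing the injectivity of $\Pi\pi_0^*$ that was used in the scalar case.

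The microlocal input driving 1) is $\WF(\pi_m^*f)\subset H^*$ for any distribution $f$ on $M$: locally in a smooth orthonormal frame $(e_j)$ of $\otimes_S^mT^*M$ we have $\pi_m^*f=\sum_j \pi_0^*(\cjg f,e_j\cjd)\,\pi_m^*(e_j)$ with $\pi_m^*e_j$ smooth, so the WF bound from $\pi_0^*$ transfers verbatim. Since $H^*\cap(E_u^*\cup E_s^*)=\{0\}$, the exact chain of microlocal estimates from Lemma \ref{boundednegative} applies: anisotropic boundedness of $R_0,R_0^*$ on $(\mc{H}^{r,s/\eps})^*$ and $(\mc{H}^{s/\eps,r})^*$, elliptic regularity off the characteristic set $E_s^*\oplus E_u^*$, and the radial-sink estimate of \cite{DyZw1} combine to give boundedness $\Pi\pi_m^*:H^{-s}(M,\otimes_S^mT^*M)\to H^{-s}(SM)$. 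Finite-dimensionality of $\ker(\Pi\pi_m^*)\cap\ker D^*$ then comes from Theorem \ref{microlocalPim}: if $f$ is in this kernel, then $\Pi_m f={\pi_m}_*\Pi\pi_m^*f=0$, so applying the parametrix $P\Pi_m=\mathrm{Id}+DSD^*+R$ and using $D^*f=0$ gives $(\mathrm{Id}+R)f=0$. The smoothing $R$ forces $f\in C^\infty$, and $\mathrm{Id}+R$ being Fredholm on any Sobolev space bounds the kernel dimension.

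For 2), the direction $(\Leftarrow)$ is immediate from $\Pi X=0$ (Theorem \ref{thPi}). For $(\Rightarrow)$, part 1 gives $f\in C^\infty$, hence $\pi_m^*f\in C^\infty(SM)$. A direct computation from $R_\pm(\lambda)1=\pm 1/\lambda$ and \eqref{laurentexp} yields $R_0(1)=R_0^*(1)=0$, so $\Pi 1=0$; writing $\pi_m^*f=c+g$ with $g\perp 1$ gives $\Pi g=0$, and Theorem \ref{thPi} then supplies smooth $u$ with $Xu=g$.

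For 3), pairing $Xu=\pi_m^*f$ with $1$ yields $\cjg\pi_m^*f,1\cjd=-\cjg u,X1\cjd=0$. The WF bound $\WF(\pi_m^*f)\subset H^*$ places $\pi_m^*f$ in $(\mc{H}^{(1-s)/\eps,r})^*\cap(\mc{H}^{r,(1-s)/\eps})^*$ for small $\eps>0$ and $r<0$, while $u\in H^s(SM)$ lies in the same spaces for $|r|\leq s$. Fredholmness of $X$ on these spaces with kernel the constants (Theorem \ref{fauresjos} applied with $\pm X$, together with \eqref{dual}) forces $u=-R_0\pi_m^*f+C_+=R_0^*\pi_m^*f+C_-$ for constants $C_\pm$. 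Subtracting, $\Pi\pi_m^*f=C_--C_+$ is a constant, and pairing with $1$ together with $\Pi 1=0$ forces this constant to vanish. Hence $f\in\ker(\Pi\pi_m^*)\cap\ker D^*$ is smooth by part 1, so by Theorem \ref{thPi} there is a smooth $u_0$ with $Xu_0=\pi_m^*f$; then $u-u_0\in L^2(SM)\cap\ker X$ is constant by ergodicity, and $u\in C^\infty(SM)$.

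The main technical obstacle is the Fredholm identification in part 3, which requires placing $u$ and $\pi_m^*f$ simultaneously in the two dual anisotropic spaces through the wave-front inclusion, and then invoking Faure--Sj\"ostrand Fredholmness for both $X$ and $-X$. Once this matching is set up, the remainder reduces to the scalar case of Lemma \ref{boundednegative} combined with the parametrix of Theorem \ref{microlocalPim}.
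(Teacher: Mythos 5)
Your proposal follows essentially the same route as the paper's own (very terse) proof: parts 1 and 3 are reduced to the $m=0$ case via the observation $\WF(\pi_m^*f)\subset H^*$ (which you justify correctly by writing $\pi_m^*f$ locally as $\sum_j\pi_0^*(\cjg f,e_j\cjd)\,\pi_m^*e_j$), while finite-dimensionality and smoothness of the kernel come from the parametrix \eqref{parametrix}, using $D^*f=0$ to reduce $P\Pi_m f=0$ to $(\mathrm{Id}+R)f=0$. Your part 3 correctly reproduces the Fredholm identification $u=-R_0\pi_m^*f+C_+=R_0^*\pi_m^*f+C_-$ from the $m=0$ case, deduces $\Pi\pi_m^*f=0$, and then upgrades the conclusion from ``$u$ constant'' (as in Lemma \ref{boundednegative}) to ``$u\in C^\infty$'' via part 2 and ergodicity. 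All this matches the paper.

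There is, however, a gap in your forward direction of part 2 --- and the same gap sits, unacknowledged, in the paper's one-line argument ``by Theorem \ref{thPi} there exists $u\in C^\infty(SM)$ so that $Xu=\pi_m^*f$''. Theorem \ref{thPi} characterizes coboundaries only among $h$ with $\cjg h,1\cjd=0$. You compute $\Pi(1)=0$, decompose $\pi_m^*f=c+g$ with $g\perp 1$, and invoke Theorem \ref{thPi} for $g$; but this yields a smooth $u$ with $Xu=g=\pi_m^*f-c$, not $Xu=\pi_m^*f$, and you never argue $c=0$. In fact $c=\cjg\pi_m^*f,1\cjd$ vanishes automatically only for $m$ odd (fiberwise parity); for $m$ even it need not: $f=\mc{S}(g^{\otimes m/2})$ satisfies $D^*f=0$ and $\pi_m^*f\equiv 1$, hence $\Pi\pi_m^*f=\Pi 1=0$, yet $Xu=1$ has no solution in any Sobolev space since $\cjg Xu,1\cjd=0\neq 1$. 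So the characterization in part 2 needs the supplementary hypothesis $\cjg\pi_m^*f,1\cjd=0$ (equivalently $\cjg f,{\pi_m}_*1\cjd=0$), or the conclusion should be $Xu=\pi_m^*f-c$ for a constant $c$. You noticed the issue by introducing the decomposition $\pi_m^*f=c+g$ but did not close it; note that this slip is harmless for the rest of the paper, since Corollary \ref{corsurjm} uses only finite-dimensionality and smoothness of $K_m$, and the tensor-tomography application is driven by the $m=1$ case.
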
 
\begin{proof} The proof of the first boundedness result is exactly the same as for the case $m=0$. 
To prove that the kernel is finite dimensional, we remark that by Theorem \ref{microlocalPim}, 
if $\Pi \,\pi_m^*f=0$ and $D^*f=0$, then $\Pi_m f=0$ and so $f$ is smooth by applying \eqref{parametrix} to $f$. Then 
by Theorem \ref{thPi} there exists $u\in C^\infty(SM)$ so that $Xu=\pi_m^*f$.  Conversely, elements $f$ so that 
$Xu=\pi_m^*f$ for some $u$ smooth satisfy $\pi_m^*f\in \ker \Pi$ by Theorem \ref{thPi} (in particular, note that  $f$ integrates to $0$ along closed geodesics). 
The proof of the last statement is the same as for $m=0$ thus we do not repeat it. 
\end{proof}

\begin{rem} This gives an alternative (microlocal) proof of the result of Dairbekov-Sharafutdinov \cite[Theorem 1.5.]{DaSh} on the finite dimensionality of the kernel of the X-ray transform on $m$-cotensors on Anosov manifolds. 
By results of Croke-Sharafutdinov \cite{CS} and Lemma \ref{boundednegativem}, we deduce that the kernel of $\Pi\, \pi_{m}^*$ on $\ker D^*$ is trivial if $(M,g)$ is Anosov with non-positive curvature, and is always trivial when $m=1$ for Anosov manifolds by \cite[Theorem 1.3]{DaSh}.
\end{rem}

Finally we get existence of invariant distributions with prescribed push-forward ${\pi_m}_*$. 
\begin{corr}\label{corsurjm}
Let $m\geq 1$,  $s\in\rr$ and $r<0$, then there exists $C>0$ such that for each $f\in H^{s}(M,\otimes_S^mT^*M)\cap \ker D^*$ with $\cjg f,k\cjd_{L^2}=0$ for all $k\in \ker \Pi\pi_m^*\cap \ker D^*$, 
there exists $w\in C^{-\infty}(SM)$ so that $Xw=0$, ${\pi_m}_*w=f$ and
\[\begin{gathered} 
\|w\|_{H^{s-1}(SM)}\leq C\|f\|_{H^{s}(M)} \quad \textrm{ if }s<1, \\  
\|w\|_{H^r(SM)}\leq C\|f\|_{H^{1}(M)} \quad \textrm{ if }s\geq 1. 
\end{gathered}\]
If $s>1$, for any $A\in \Psi^0(SM)$ with wave front set not intersecting $E_u^*\cup E_s^*$, there is $C>0$ such that  for each $f$ as above, the invariant distribution $w$ satisfies $||Aw||_{H^{s-1}(SM)}\leq C||f||_{H^s(M)}$.
\end{corr}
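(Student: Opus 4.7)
The proof adapts that of Corollary \ref{surj0}, with two key modifications required because $\Pi_m$ is only elliptic on $\ker D^*$ (via the parametrix in Theorem \ref{microlocalPim}) and because $\Pi\,\pi_m^*|_{\ker D^*}$ has the possibly nontrivial, finite dimensional kernel $\mc{K}:=\ker\Pi\,\pi_m^*\cap\ker D^*$ of smooth tensors (Lemma \ref{boundednegativem}).

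For $s\neq 0$, I would introduce on the closed subspace $V_s:=H^{-s}(M,\otimes_S^m T^*M)\cap\ker D^*$ the sesquilinear form
\[B_s(u,u'):=\cjg\Pi\,\pi_m^* u,\Pi\,\pi_m^* u'\cjd_{H^{-|s|}(SM)}+\cjg\Pi_m u,\Pi_m u'\cjd_{H^{1-s}(M)}.\]
Boundedness of $B_s$ follows from Lemma \ref{boundednegativem} together with Theorem \ref{microlocalPim} (since $\Pi_m\in\Psi^{-1}$). For coercivity on $\mc{K}^\perp\cap V_s$, I would apply the parametrix identity $P\Pi_m=\mathrm{Id}+DSD^*+R$ to $u\in\ker D^*$: the $DSD^*u$ term vanishes, yielding
\[\|u\|_{H^{-s}(M)}\leq C\|\Pi_m u\|_{H^{1-s}(M)}+\|R u\|_{H^{-s}(M)},\]
with $R$ smoothing and hence compact on $H^{-s}$. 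Since $B_s(u,u)=0$ forces $\Pi\,\pi_m^* u=0$, we have $\ker B_s|_{V_s}=\mc{K}$, and a standard Fredholm-type argument upgrades the estimate above to a genuine coercivity bound $B_s(u,u)\geq c\|u\|^2_{H^{-s}}$ on $\mc{K}^\perp\cap V_s$.

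Riesz representation then provides, for each $f\in H^s\cap\ker D^*\cap\mc{K}^\perp$, a $u\in\mc{K}^\perp\cap V_s$ with $B_s(u,u')=\cjg f,u'\cjd_{L^2}$ for all $u'\in V_s$ (the identity extends from $\mc{K}^\perp\cap V_s$ to $\mc{K}$ itself because both sides vanish on $\mc{K}$ by the orthogonality assumption on $f$), together with the bound $\|u\|_{H^{-s}}\leq C\|f\|_{H^s}$. Setting
\[\tilde u:=\Lambda_{SM}^{-2|s|}\Pi\,\pi_m^* u+\pi_m^*\Lambda_M^{-2(s-1)}\Pi_m u,\qquad w:=\Pi\,\tilde u,\]
Theorem \ref{thPi} gives $Xw=0$ and ${\rm WF}(w)\subset E_u^*\cup E_s^*$. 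To verify ${\pi_m}_* w=f$, for $\psi\in C^\infty(M,\otimes_S^m T^*M)\cap\ker D^*$ the self-adjointness of $\Pi$ gives
\[\cjg{\pi_m}_* w,\psi\cjd=\cjg\tilde u,\Pi\,\pi_m^*\psi\cjd=B_s(u,\psi)=\cjg f,\psi\cjd,\]
while for $\psi=Dh$ in the range of $D$, the key identity $\pi_m^*Dh=X\pi_{m-1}^*h$ combined with $Xw=0$ and $D^*f=0$ yields $\cjg{\pi_m}_*w,Dh\cjd=-\cjg Xw,\pi_{m-1}^*h\cjd=0=-\cjg D^*f,h\cjd$. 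Together these give ${\pi_m}_* w=f$. The Sobolev regularity of $w$ and the refined microlocal estimate $\|Aw\|_{H^{s-1}(SM)}\leq C\|f\|_{H^s(M)}$ for $A\in\Psi^0(SM)$ with wave front set disjoint from $E_u^*\cup E_s^*$ follow exactly as in the proof of Corollary \ref{surj0}, from the mapping properties of $R_0$ on the anisotropic spaces $\mc{H}^{s-1,r}$ combined with elliptic regularity and propagation of singularities.

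The main obstacle is the coercivity step: one needs to carefully combine the parametrix identity, which only yields an invertibility statement for $\Pi_m$ on $\ker D^*$, with the quotient by the finite dimensional kernel $\mc{K}$ in order to convert a compact error into genuine coercivity. Once this is done, the construction of $\tilde u$ and the verification of the push-forward identity are a straightforward adaptation of the scalar case $m=0$.
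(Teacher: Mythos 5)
Your proof is correct and follows essentially the same approach as the paper: the same sesquilinear form $B_s$, coercivity on a codimension-$\dim\mc{K}$ subspace via the parametrix of Theorem \ref{microlocalPim}, Riesz representation, and the explicit $\tilde u$ and $w=\Pi\tilde u$. The only difference is organizational: the paper works with the $\Lambda^{2s}$-conjugated constraints $\ker D^*\Lambda^{-2s}$ and $(\Lambda^{2s}K_m)^\perp$ so that the $L^2$ pairing respects the $H^{\pm s}$ duality, and then eliminates the resulting error terms $k$ and $\Lambda^{-2s}Dq$ in ${\pi_m}_*w$; you work directly with $\ker D^*\cap\mc{K}^\perp$, extend the Riesz identity across $\mc{K}$ using that both sides vanish there, and verify ${\pi_m}_*w=f$ by testing separately against $\ker D^*$ and $\mathrm{Ran}(D)$ (via $\pi_m^*Dh=X\pi_{m-1}^*h$ and $Xw=0$, which is the same observation the paper records as $D^*{\pi_m}_*\Pi=0$) — a slightly more transparent packaging of the same argument.
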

\begin{proof} The proof is essentially the same as Corollary \ref{surj0}, thus we just focus on the differences.
Take $s\not=0$ and define the sesquilinear form on $C^\infty(SM)$
\[ B_s(u,u'):= \cjg \Pi \pi_m^*u,\Pi\pi_m^*u'\cjd_{H^{-|s|}(SM)}+\cjg \Pi_m u,\Pi_mu'\cjd_{H^{1-s}(M,\otimes_S^mT^*M)}.\] 
The $H^s$-norm on $\otimes_S^mT^*M$ is defined by $||u||_{H^s}=||\Lambda^su||_{L^2}$ where 
$\Lambda:=(1+D^*D)^{1/2}$. Then we conjugate \eqref{parametrix} by $\Lambda^{2s}$ on the left so that
\[  \Lambda^{2s}P\Pi_m\Lambda^{-2s}={\rm Id}+\Lambda^{2s}DSD^*\Lambda^{-2s}+\tilde{R}\]
where $\tilde{R}$ is smoothing. The left-hand side can be written as $P\Pi_m+Q$ with $Q\in \Psi^{-1}(SM)$, thus there exists $C>0$ and $T:H^{-s}(M,\otimes_S^mT^*M)\to H^{-s}(M,\otimes_S^mT^*M)$ compact such that for all 
$u\in H^{-s}(M,\otimes_S^mT^*M)\cap \ker D^*\Lambda^{-2s}$
\[ B_s(u,u)\geq \|\Pi_m u\|^2_{H^{1-s}(M,\otimes_S^mT^*M)}\geq C\|u\|^2_{H^{-s}(M,\otimes_S^mT^*M)}-
\|Tu\|^2_{H^{-s}(M,\otimes_S^mT^*M)}.\]
By Lemma \ref{boundednegativem}, the kernel $K_m$ of $\Pi \pi_m^*$ in $C^{-\infty}(M,\otimes_S^mT^*M)\cap \ker D^*$ is a finite dimensional space included in $C^\infty(M,\otimes_S^mT^*M)$, and we claim that 
if $u\in \ker \Pi\pi_m^*\cap \ker D^*\Lambda^{-2s}$ is in $H^{-s}(M,\otimes_S^mT^*M)$ and $\cjg u,k\cjd_{L^2}=0$ for all $k\in K_m$, then $u=0$: indeed we can decompose $u=u_0+Du'$ with $D^*u_0=0$ and $u'\in H^{-s+1}
(M,\otimes_S^{m-1}T^*M)$ and we get $\Pi\pi_m^*u_0=0$ since 
$\Pi\pi_m^*Du'=\Pi X\pi_{m-1}u'=0$; then $u_0\in K_m$ and $0=\cjg u,k\cjd_{L^2}=\cjg u_0+Du',k\cjd_{L^2}=\cjg u_0,k\cjd_{L^2}$ for each $k\in K_m$ and we get $u_0=0$. Since $D^*\Lambda^{-2s}u=0$, we have 
$D^*\Lambda^{-2s}Du'=0$, which gives $Du'=0$ (by pairing with $u'$), and thus $u=0$.
We conclude that there is $C>0$ such that 
\[B_s(u,u)\geq C\|u\|^2_{H^{-s}(M,\otimes_S^mT^*M)}\]
for each $u\in \mc{B}_s:=H^{-s}(M,\otimes_S^mT^*M)\cap \ker D^*\Lambda^{-2s}\cap (\Lambda^{2s}K_m)^\perp$ (the orthogonal is with respect to $H^{-s}$ product).
Then $\mc{B}_s$ is a Hilbert space with the $B_s$ scalar product.
Using Riesz representation theorem, for all $f\in H^{s}(M,\otimes_S^mT^*M)\cap \ker D^*$ satisfying 
$\cjg f,k\cjd_{L^2}=0$ for all $k\in K_m$, there exists $u\in \mc{B}_s$ such that 
$B_s(u,u')=\cjg f,u'\cjd_{L^2}$ for all $u'\in \mc{B}_s$, with $||u||_{B_s}\leq C||f||_{H^s}$ for some $C$ independent of $f$. From this, there is $w:=\Pi\Lambda^{-2|s|}\Pi\pi_m^*u+\Pi \pi_m^*\Lambda^{2-2s}\Pi_mu$ such that $Xw=0$, ${\pi_m}_*(w)=f+k+\Lambda^{-2s}Dq$ for some $q\in H^{s+1}(M;\otimes_S^{m-1}T^*M)$ and $k\in K_m$. We apply $D^*$ to this identity and since $D^*{\pi_m}_*\Pi=0$, we get $D^*\Lambda^{-2s}Dq=0$ and thus $Dq=0$. Now we use that 
$\cjg f,k\cjd_{L^2}=0$ and $\cjg {\pi_m}_*w,k\cjd_{L^2}=0$ to deduce that $k=0$. The regularity of $w$ is just as in the case $m=0$.
\end{proof}

\begin{rem} An analysis similar to what is done in Theorem \ref{microlocalPi0} and \ref{microlocalPim} shows that the operators 
${\pi_\ell}_*\Pi \, \pi_m^*$ are pseudo-differential of order $-1$ for all $m,\ell$. In particular, 
this implies that the components $w_\ell:={\pi_\ell}_*w$ of $w$ in Corollaries \ref{surj0} and \ref{corsurjm} are $H^{s-1}(M,E_\ell)$ if $f\in H^{s}(M,\otimes_S^m T^*M)$. In particular if $f\in C^\infty(M,\otimes_S^m T^*M)$ then the distribution $v$ in Corollaries \ref{surj0} and \ref{corsurjm} has smooth coefficients 
in the vertical Fourier decomposition (matching with the work \cite{PSU2} for $m=0$). This also follows from the 
wave front set property of $w$ and applying ${\pi_\ell}_*$.
\end{rem}

\subsection{Injectivity of X-ray transform on tensors for Anosov surfaces}

Consider an oriented compact Riemannian surface $(M,g)$ with Anosov geodesic flow $\varphi_t:SM\to SM$. As before,  let $X$ be the smooth vector field generating the flow. The manifold $SM$ is a circle bundle over $M$, equipped with 
a natural action 
 \[S^1 \x SM\to SM, \quad e^{i\theta}.(x,v)=(x,R_\theta(v))\]
where $R_\theta$ is the rotation of an angle $+\theta$ in the fiber. The action is generated by a vector field 
$V$ defined by $Vf(x,v)=\pl_tf(e^{it}.(x,v))|_{t=0}$. We let $X_\perp:=[X,V]$, and it can be checked 
that $(X,X_\perp,V)$ form a basis of $T(SM)$, which is orthonormal for the Sasaki metric. 
The tangent space $T(SM)$ splits as $T(SM)=\mc{V}\oplus \mc{H}$
where $\mc{V}=\rr V=\ker d\pi_0$ is the vertical space and $\mc{H}$ is the horizontal space defined using Levi-Civita connection (cf \cite{Pa}). In particular, one has $\mc{H}={\rm span}(X,X_\perp)$.
Let $\alpha$ be the Liouville $1$-form defined by $\alpha(x,v).\eta=g(d\pi_0(x,v).\eta,v)$ which satisfies 
$\alpha(X)=1$, $\iota_X d\alpha=0$ and is a contact form. One has 
$\ker \alpha=E_u\oplus E_s$.
Near a point $x_0$ of $M$, one can find isothermal coordinates $x=(x_1,x_2)$ so that the metric is of the form
$g=e^{2\omega(x)}(dx_1^2+dx_2^2)$ for some smooth function $\omega$. Using the action generated by $V$, this induces coordinates $(x_1,x_2,e^{i\theta})$ near the fiber $\pi^{-1}(x_0)$, and in these coordinates 
\[ \begin{gathered}
V=\pl_\theta ,\\
 X= e^{-\omega}(\cos(\theta)\pl_{x_1}+\sin(\theta)\pl_{x_2}+(-\pl_{x_1}\omega\sin\theta+\pl_{x_2}\omega \cos\theta)\pl_\theta),\\
 X_\perp= -e^{-\omega}(-\sin(\theta)\pl_{x_1}+\cos(\theta)\pl_{x_2}-(\pl_{x_1}\omega\cos\theta+\pl_{x_2}\omega \sin\theta)\pl_\theta).
\end{gathered}\]

\textbf{Fourier decomposition in the fibers.} Each smooth function $u\in C^\infty(SM)$ can be decomposed as 
\begin{equation}\label{fourier}
u=\sum_{k\in\zz} u_k, \quad Vu_k=iku_k
\end{equation}
using the Fourier decomposition in the fibers with 
$\|u_k\|_{L^2}=\mc{O}(|k|^{-\infty})$ (see \cite{GK1}) . This decomposition extends to $L^2(SM)$  and induces a splitting 
$L^2(SM)=\oplus_{k\in\zz} H_k$ where elements in $H_k$ correspond to $\ker (V-ik)$ and can be represented as $L^2$-sections of $k$-th power of a complex line bundle over $M$.
The geodesic vector field $X$ acts as a first order differential operator on $C^\infty(SM)$ and can be decomposed as 
\[ X=\eta_++\eta_- , \quad \eta_\pm=\demi(X\pm i X_\perp),\]
where in the decomposition \eqref{fourier}, $\eta_\pm : H_k\to H_{k\pm 1}$ for all $k\in \zz$.
Symmetric cotensors of order $m$ are embedded as functions in $SM$ by the map $\pi_m^*: C^{\infty}(M,\otimes_S^mT^*M)\to C^\infty(SM)$ as before.
The map $\pi_m^*$ is an isomorphism 
\begin{equation}\label{isom}
\pi_m^*: C^\infty(M,\otimes_S^mT^*M)\to (H_m\oplus H_{m-2}\oplus\dots\oplus H_{-m+2}\oplus H_{-m})\cap C^\infty(SM),
\end{equation} 
with sections of $\mc{S}(dz^{m-j}\otimes d\bar{z}^{j})$ mapped to $H_{m-2j}$ by $\pi^*_m$. 
If $q\in H_m$, we have $\pi_m^*{\pi_m}_*q=c_mq$ for some $c_m\not=0$ depending only on $m$. 
The operator $X$ has a splitting   
\[ X=X_++X_-\]
where, using \eqref{isom}, the action on each $H_m$ is given by $X_+=D$ and $X_-=-\demi D^*$.
We define the antipodal map $A: SM\to SM$ by $A(x,v):=(x,-v)$. In terms of coordinates $(x,\theta)$, this is simply the 
translation of $\pi$ in the $\theta$ component. The action by pull-back on functions becomes, 
in the decomposition \eqref{fourier},
\[ A^*:C^\infty(SM)\to C^\infty(SM), \quad A^*u=\sum_{k\in \zz} (-1)^ku_k=\sum_{k\in \zz} u_{2k}-
\sum_{k\in \zz} u_{2k+1}.\]
The kernel of $\demi({\rm Id}-A^*)$ consists of functions with even Fourier coefficients in the fibers,  the 
map $\demi({\rm Id}-A^*)$ has range the set of functions with odd Fourier coefficients.  
These operator extend continuously to $C^{-\infty}(SM)$ by duality since $A$ is a diffeomorphism, and we shall say that a distribution is odd (resp. even) if it is in the range (resp. kernel) of $\demi({\rm Id}-A^*)$. We can write in general $u=u_{\rm ev}+u_{od}$ for distributions, where $u_{od}:=\demi({\rm Id}-A^*)u$.
The operator $X$ maps odd distributions to even distributions and conversely. Therefore one has 
\begin{equation}\label{oddeven}
w\in C^{-\infty}(SM),\,\,  Xw=0 \Longrightarrow Xw_{\rm od}=0\textrm{ and }Xw_{\rm ev}=0.
\end{equation}

\textbf{Szeg\"o projector.} We now define the Szeg\"o projection in the fibers using decomposition \eqref{fourier} 
\[ S: C^\infty(SM)\to C^\infty(SM), \quad Su=\sum_{k\geq 1}u_k\]
which is the projector on the positive Fourier coefficients. This extends as a self-adjoint bounded operator on $L^2(SM)$, and 
as a bounded operator on $H^{s}(SM)$ for all $s\in\rr$.
Moreover an easy computation using $X=\eta_++\eta_-$ gives 
\begin{equation}\label{commut} 
XSu=SXu-\eta_+u_0+\eta_-u_1.
\end{equation}
Let $p_V\in C^\infty(T^*SM)$ be the principal symbol of $-iV$, it is given by $p_V(y,\xi)=\xi(V_y).$
We first recall a standard result.
\begin{lemm}\label{lemm39}
The operator $S$ 
 has Schwartz kernel with wave front set contained in 
\[\{(y,\xi,y,-\xi)\in T^*(SM)\x T^*(SM); \, p_V(\xi)\geq 0\}.\]
\end{lemm}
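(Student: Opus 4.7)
The plan is to express $S$ as a push-forward of the classical Szeg\"o kernel on $S^1$ via the $2\pi$-periodic $S^1$-action $\phi_\theta^V$ generated by $V$, and then compute the wave front set using H\"ormander's push-forward theorem.

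First, I would use the fact that every $u_k\in H_k$ satisfies $u_k\circ \phi_\theta^V = e^{ik\theta}u_k$, giving the spectral projectors of $-iV$ in group-theoretic form
\begin{equation*}
P_ku(y)=\frac{1}{2\pi}\int_{0}^{2\pi}e^{-ik\theta}\,u(\phi_\theta^V(y))\,d\theta, \qquad k\in\zz.
\end{equation*}
Summing over $k\geq 1$ in the distributional (Abel) sense yields
\begin{equation*}
Su(y)=\int_{0}^{2\pi}K_0(\theta)\,u(\phi_\theta^V(y))\,d\theta, \qquad K_0:=\frac{1}{2\pi}\sum_{k\geq 1}e^{-ik\theta}\in \mc{D}'(S^1),
\end{equation*}
so the Schwartz kernel of $S$ is $K_S=F_*(K_0(\theta)\otimes 1)$, where $F:S^1\x SM\to SM\x SM$ is the smooth embedding $F(\theta,y):=(y,\phi_\theta^V(y))$. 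A direct computation of the Fourier coefficients of $K_0$ (supported on the negative integers with the convention $\widehat{e^{ik\theta}}=2\pi\delta_k$) shows $\WF(K_0)=\{(0,\eta)\in T^*S^1\setminus 0:\eta<0\}$, and since the constant $1$ is smooth, $\WF(K_0\otimes 1)\subset\{(0,\eta,y,0):\eta<0,\ y\in SM\}$.

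Second, I would apply H\"ormander's push-forward theorem \cite[Thm.~8.2.12]{Ho1} to $F$: the transpose of its differential at $(\theta_0,y_0)$ acts on $(\xi,\xi')$ at $F(\theta_0,y_0)$ as
\begin{equation*}
dF(\theta_0,y_0)^T(\xi,\xi')=\bigl(\xi'(V(\phi_{\theta_0}^V(y_0))),\ \xi+(d\phi_{\theta_0}^V)^T\xi'\bigr).
\end{equation*}
Matching this with $\WF(K_0\otimes 1)$ forces $\theta_0=0$ and hence $y=y'$, together with $\xi=-\xi'$ and $\xi'(V(y))<0$, i.e.\ $p_V(y,\xi)>0$. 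The complementary contribution---coming from the case $dF^T(\xi,\xi')=0$ in the theorem, namely the conormal bundle $N^*F(S^1\x SM)$ of the image submanifold---reduces on the diagonal to $\{(y,\xi,y,-\xi):p_V(y,\xi)=0\}$. The union of these two pieces gives precisely the claimed inclusion $\WF(S)\subset\{(y,\xi,y,-\xi)\in T^*(SM)\x T^*(SM):p_V(\xi)\geq 0\}$. The main technical subtlety, really the only one, is keeping track of sign conventions so that the negative direction in $\WF(K_0)$ translates into the positive half $p_V(\xi)\geq 0$ on the first factor; beyond this, the microlocal structure of a Szeg\"o projector along a smooth $S^1$-action is entirely classical.
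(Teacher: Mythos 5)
Your proof takes a genuinely different route from the paper's. The paper works in local isothermal coordinates near a fiber $\pi_0^{-1}(x_0)$, observes that $S$ is there a convolution operator with convolution kernel $\tilde{S}(x,\theta)=\delta_0(x)\otimes\frac{1}{2\pi}\sum_{k\geq 1}e^{ik\theta}$, and splits $\tilde{S}$ by a cutoff $\chi$ supported away from $\theta=0$: the $\chi$-part is conormal along $\{0\}\times S^1$, and the $(1-\chi)$-part is handled by an explicit Fourier decay estimate in the cone $p_V<0$. You instead give an invariant, global argument: you realize $K_S$ as the pushforward $F_*(K_0\otimes 1)$ along the proper embedding $F(\theta,y)=(y,\phi^V_\theta(y))$ and apply the pushforward theorem. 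Your computation of $dF^T$ is correct, and so is the sign bookkeeping: $K_0=\frac{1}{2\pi}\sum_{k\geq 1}e^{-ik\theta}$ has Fourier support on $k\leq -1$, hence $\WF(K_0)\subset\{\eta<0\}$, which after matching $dF^T(\xi,\xi')=(\eta,0)$ forces $\theta_0=0$, $\xi=-\xi'$ and $p_V(y,\xi)>0$.

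There is, however, a gap that you share with the paper. The conormal bundle $N^*\bigl(F(S^1\times SM)\bigr)$ is the conormal to the fiber diagonal $\{(y,y'):\pi_0(y)=\pi_0(y')\}$, a codimension-$2$ submanifold strictly larger than $\Delta(SM\times SM)$. It contains points $(y,\xi,\phi^V_{\theta_0}(y),\xi')$ with $\theta_0\neq 0$, $p_V(\xi)=p_V(\xi')=0$ and $\xi=-(d\phi^V_{\theta_0})^T\xi'$, and these lie genuinely in $\WF(K_S)$: near such a point $K_S=\delta(x-x')\cdot(\textrm{nonvanishing smooth factor})$ in local coordinates, since $\tfrac{1}{2\pi}\sum_{k\geq 1}e^{ik\theta}=\tfrac{e^{i\theta}}{2\pi(1-e^{i\theta})}$ does not vanish for $\theta\ne 0$. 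Your phrase that this contribution ``reduces on the diagonal'' to $\{(y,\xi,y,-\xi):p_V(\xi)=0\}$ does not justify dropping the off-diagonal part, and indeed it cannot be dropped. The paper's own proof contains exactly the same slip: the convolution kernel $(\chi\tilde S)(z-z')$ has wave front set in the conormal to $\{x=x'\}$, not to $\{z=z'\}$, yet the paper asserts it lies in $\{(y,\xi,y,-\xi):p_V(\xi)=0\}$. The corrected conclusion is
\[
\WF(K_S)\subset\{(y,\xi,y,-\xi):p_V(\xi)\geq 0\}\cup\{(y,\xi,\phi^V_\theta(y),\xi'):\theta\neq 0,\ p_V(\xi)=0,\ \xi=-(d\phi^V_\theta)^T\xi'\}.
\]
This does not affect Corollary \ref{product} in substance (the extra piece has both covectors nonzero and $p_V(\xi)=0$ on the first slot, so $\WF_{X}(K_S)=\WF_{Y}(K_S)=\emptyset$ and the composition estimates still go through), but a complete argument should state and use this corrected inclusion rather than the one in the lemma.
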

\begin{proof}
First we notice that the distribution kernel is supported on $\{\pi_0(y)=\pi_0(y')\}$. Then
it suffices to work locally near a point $(x_0,\theta_0,x_0,\theta'_0)\in SM\x SM$ 
and we use isothermal coordinates there. 
The operator $S$ acting on functions supported in a neighborhood of $\pi^{-1}(x_0)$ can be viewed as an operator with a  compactly supported distributional kernel on $(\rr^2\x S^1)\x(\rr^2\x S^1)$; moreover, this is a convolution operator and the convolution kernel is given by
\[ \tilde{S}(x,e^{i\theta})=\delta_0(x)\otimes \frac{1}{2\pi}\sum_{k\geq 1}e^{ik\theta}\] 
The singular support of $\tilde{S}$ is contained in $\{0\}\x S^1$. Notice that $\sum_{k\geq 1}e^{ik\theta}=e^{i\theta}(1-e^{i\theta})^{-1}$ is smooth outside $e^{i\theta}=1$, then for any smooth function $\chi$ on $S^1$ which vanishes near $1$, the wave front set 
of $\chi(e^{i\theta})\tilde{S}$ is contained in the conormal bundle $N^*(\{0\}\x S^1)$, and 
since this was a convolution kernel, when returning to $SM\x SM$ this part has wave front set contained in 
the subset $\{(y,\xi,y,-\xi); p_V(\xi)=0\}$ of the conormal bundle to the diagonal. Now we are left to analyse 
the remaining part of the convolution kernel $(1-\chi(e^{i\theta}))\tilde{S}$. If $1-\chi$ is supported close enough to $1$, we can view this as a distribution on $\rr^2_x\x \rr_\theta$ with Fourier transform 
\[  (\xi_x,\xi_\theta)\mapsto \sum_{k\geq 1}\hat{\psi}(\xi_\theta-k)\] 
where $\psi\in C_0^\infty(\rr)$ is a function equal to $1$ near $0$ and $\psi(\theta)=0$ for $|\theta|>\pi/2$. Since $p_V(\xi_x,\xi_\theta)=\xi_\theta$, 
the Fourier transform decays to all order in all directions $\xi=(\xi_x,\xi_\theta)$ so that $p_V(\xi)<0$. Coming back 
to $SM\x SM$, this part has wave front set contained in $\{(y,\xi,y,-\xi); p_V(\xi)\geq 0\}$.
\end{proof}
 As a corollary of this, we obtain 
\begin{corr}\label{product}
Let $B\in \Psi^0(SM)$ such that ${\rm Id}-B$ is microsupported outside a conic neighborhood of $\{p_V(\xi)=0\}$.
Let $u,v\in C^{-\infty}(SM)$ be odd so that $Bu,Bv\in H^2(SM)$. 
Then the multiplication $w:=S(u).S(v)$ makes sense as a distribution on 
$SM$, $w=S(w)$, and $w=w'+w''$ for some $w'\in H^2(SM)$ and some $w''$
with wave front set ${\rm WF}(w'')\subset \{p_V(\xi)>0\}$. Let $u_1=S(\pi_1^*{\pi_1}_*u)$ and 
$v_1=S(\pi_1^*{\pi_1}_*v)$, then if $Xu=Xv=0$ and $\eta_-u_1=\eta_-v_1=0$, we have $Xw=0$.
\end{corr}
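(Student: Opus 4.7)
The plan is to decompose $Su$ and $Sv$ microlocally using the ellipticity of $B$ near $\{p_V=0\}$, verify the four bilinear pieces of the product are each well-defined, extract an $H^2$ part via a $\Psi$DO cutoff, and use the $V$-Fourier structure to obtain $Xw=0$. Set $f_1:=SBu$, $g_1:=S(\mathrm{Id}-B)u$ and similarly $f_2, g_2$ for $v$. Since $Bu, Bv\in H^2$ and $S$ is bounded on every $H^s$, one has $f_1, f_2\in H^2(SM)$; by Lemma \ref{lemm39} together with the hypothesis that $\mathrm{Id}-B$ is microsupported outside a conic neighborhood of $\{p_V=0\}$, one gets $\WF(g_i)\subset\{p_V(\xi)\geq c|\xi|\}$ for some $c>0$, strictly inside $\{p_V>0\}$. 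Expanding
\[ Su\cdot Sv = f_1 f_2 + f_1 g_2 + g_1 f_2 + g_1 g_2, \]
the first product lies in $H^2(SM)$ by the Sobolev algebra property ($\dim SM=3$ makes $H^2$ a Banach algebra since $2>3/2$); each of the other three products is well-defined as a distribution by H\"ormander's theorem on multiplication, because the wave front of any $g_i$ lies strictly in $\{p_V>0\}$ while the other factor has wave front in $\{p_V\geq 0\}$, so no covector $(y,\xi)$ in $\WF$ of one factor admits $(y,-\xi)$ in $\WF$ of the other. This defines $w:=Su\cdot Sv\in\mc{D}'(SM)$.

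Next, $S(w)=w$ follows from the fiberwise $V$-Fourier decomposition: $Su$ and $Sv$ have Fourier coefficients supported at indices $k\geq 1$, so the $m$-th coefficient of the product, the finite convolution $\sum_{k+l=m,\,k,l\geq 1}(Su)_k(Sv)_l$, vanishes for $m\leq 1$. Together with Lemma \ref{lemm39}, the identity $w=S(w)$ then yields $\WF(w)\subset\{p_V\geq 0\}$. For the splitting $w=w'+w''$, choose $\chi\in\Psi^0(SM)$ elliptic on a small conic neighborhood of $\{p_V=0\}$ and microsupported inside $\{|p_V|<c|\xi|\}$, and set $w':=\chi w$, $w'':=(\mathrm{Id}-\chi)w$. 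Then $\WF(w'')$ is contained in $\WF(w)$ intersected with the microsupport of $\mathrm{Id}-\chi$, hence in $\{p_V\geq\delta|\xi|\}\subset\{p_V>0\}$ for some $\delta>0$. The main technical obstacle is to show $w'\in H^2(SM)$: since $Su=f_1+g_1$ with $f_1\in H^2$ and $\WF(g_1)$ bounded away from $\{p_V=0\}$, and analogously for $Sv$, both $Su$ and $Sv$ are microlocally $H^2$ in a neighborhood of $\{p_V=0\}$, so a microlocal Sobolev multiplication result (valid since $2+2>\dim(SM)/2=3/2$) forces $Su\cdot Sv$ to be microlocally $H^2$ there, whence $\chi w\in H^2(SM)$.

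Finally, for $Xw=0$: since $u$ is odd, its $V$-Fourier expansion has vanishing zero-th coefficient $u_0=0$, so \eqref{commut} reduces to $XSu=SXu+\eta_- u_1$; the hypotheses $Xu=0$ and $\eta_- u_1=0$ then give $XSu=0$, and symmetrically $XSv=0$. The Leibniz rule $X(Su\cdot Sv)=(XSu)Sv+Su(XSv)$ holds distributionally since $\WF(XSu)\subset\WF(Su)$ preserves H\"ormander's condition for both resulting products, whence $Xw=0$.
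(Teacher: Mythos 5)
Your proof follows the paper's overall structure quite closely: the same decomposition of $Su$ and $Sv$ into an $H^2$ piece $f_i = S(Bu)$ (resp. $S(Bv)$) and a piece $g_i$ with wave front set conically bounded away from $\{p_V=0\}$; the same use of H\"ormander's product theorem to make sense of the cross terms; the same argument $Sw=w$ via fiberwise Fourier mode counting (the paper says ``approximate by smooth functions,'' which is the rigorous formulation of the same idea, since the Fourier coefficients of $w$ are not a priori absolutely summable); and the same route $u_0=0 \Rightarrow XS u = SXu + \eta_- u_1 = 0$ from \eqref{commut} to get $Xw=0$.

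The genuine divergence, and a gap, is in how you produce the splitting $w=w'+w''$. You introduce a new $\Psi$DO cutoff $\chi$ microsupported near $\{p_V=0\}$ and set $w':=\chi w$, $w'':=(\mathrm{Id}-\chi)w$. The bound $\WF(w'')\subset\{p_V>0\}$ is then clean, but the claim $\chi w\in H^2$ is the crux and you support it only by invoking a ``microlocal Sobolev multiplication result.'' No such general result exists: microlocal $H^s$ regularity of two factors at a covector $(y,\xi_0)$ does not imply microlocal $H^s$ regularity of the product at $(y,\xi_0)$, because $\WF(ab)$ is controlled by the \emph{sum} set $\WF(a)\oplus\WF(b)$ (H\"ormander Thm.\ 8.2.10), so singularities of $a$ and $b$ located away from $\xi_0$ can add up to land at $\xi_0$. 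In the problem at hand this is partly saved by the half-space geometry of $\{p_V\geq 0\}$ (if $p_V(\xi_1)+p_V(\xi_2)=0$ with both nonnegative then both vanish), but that extra structure is exactly what would need to be used and you don't use it. The paper sidesteps this entirely by taking $w':=S(Bu)\,S(Bv)$, which is manifestly in $H^2(SM)$ by the algebra property of $H^2$ when $2>\dim(SM)/2$, and assigning the three cross terms to $w''$; no microlocal multiplication lemma is then needed for the $H^2$ claim. I'd recommend you adopt the paper's choice of $w'$; your cutoff-based split adds a nontrivial obstacle without buying anything.

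One further small imprecision: you assert $\WF(g_i)\subset\{p_V(\xi)\geq c|\xi|\}$ ``by Lemma~\ref{lemm39} together with'' the hypothesis on $B$. The correct order of reasoning is that $(\mathrm{Id}-B)u$ has wave front set in $\{|p_V|\geq c|\xi|\}$ (from the microsupport of $\mathrm{Id}-B$), and then applying $S$ intersects this with $\{p_V\geq 0\}$, giving $\{p_V\geq c|\xi|\}$. That is what you mean, but it's worth being explicit that the operator $S$ does not \emph{shrink} every distribution's wave front set to $\{p_V\geq 0\}$; it does so only because the kernel from Lemma~\ref{lemm39} has no fibered wave front set.
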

\begin{proof} We write $u=u'+u''$ and $v=v'+v''$ with $u':=Bu$ and 
$v':=Bv$. Then 
$S(u'),S(v')\in H^2(SM)$ so that 
the product $w':=S(u').S(v')$ make sense as an element in $H^2(SM)$ (recall that $H^2(SM)\subset L^\infty(SM)$).
We have ${\rm WF}(u'')\subset \{(y,\xi); |p_V(\xi)|>0\}$ by the microsupport property of $B$, thus we can use \cite[Theorem 8.2.13]{Ho1} and Lemma \ref{lemm39} 
to deduce that ${\rm WF}(S(u''))\subset \{(y,\xi); p_V(\xi)>0\}$ and ${\rm WF}(S(u'))\subset \{(y,\xi); p_V(\xi)\geq 0\}$;  the same property holds for $S(v')$ and $S(v'')$.
Then by \cite[Theorem 8.2.10]{Ho1}, we see that the multiplication $S(u'').S(v)$ and $S(u').S(v'')$ make sense as distributions and have wave front set in $\{(y,\xi); p_V(\xi)>0\}$.  This shows that $S(u).S(v)$ make sense as a distribution and we have the desired result with $w'':=S(u'').S(v)+S(u').S(v'')$.
The fact that $S(u).S(v)=S(S(u).S(v))$ is straightforward to check by taking 
sequences $u_n,v_n\in C^\infty(SM)$ converging to $u,v\in C^{-\infty}(SM)$ as $n\to \infty$ and using that the equality 
$S(S(u_n)S(v_n))$ holds for all $n$.
Moreover, if $u,v$ are odd, in $\ker X$ and $\eta_-u_1=\eta_-v_1=0$,  we deduce from \eqref{commut} that 
$XS(u)=XS(v)=0$ and thus $X(S(u).S(v))=0$, again by approximating by smooth functions in the distribution topology.  This completes the proof.
\end{proof}

\textbf{Proof of Injectivity of X-ray on tensors.} We can now prove 
\begin{theo}
On a Riemannian surface with Anosov geodesic flow, then for all $m\geq 0$ we have 
$\ker I_m\cap \ker D^*=0.$
\end{theo}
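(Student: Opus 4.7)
The plan is a Livsic reduction followed by a duality argument against a rich family of invariant distributions built from the $m=1$ case via the Szeg\"o projector and the product construction of Corollary \ref{product}.

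Let $f \in \ker I_m \cap \ker D^*$. Since $I_m f = 0$ means $\pi_m^* f \in C^\infty(SM)$ integrates to zero along every closed orbit, the Livsic theorem (Corollary \ref{corlivsic}) gives $u \in C^\infty(SM)$ with $Xu = \pi_m^* f$. Consequently, for every invariant distribution $W \in C^{-\infty}(SM)$,
\[
\langle f, \pi_{m*} W\rangle_M \;=\; \langle \pi_m^* f, W\rangle_{SM} \;=\; \langle Xu, W\rangle \;=\; -\langle u, XW\rangle \;=\; 0,
\]
so the problem reduces to producing a family of invariant $W$ whose push-forwards $\pi_{m*}W$ are total in $\ker D^*$.

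For the construction, start from Corollary \ref{corsurjm} in the $m=1$ case, unconditional on Anosov manifolds since $\ker \Pi\pi_1^* \cap \ker D^* = 0$ by Dairbekov--Sharafutdinov (cf.\ the remark after Lemma \ref{boundednegativem}). For each divergence-free 1-form $h$ on $M$ it yields an invariant distribution $w^h$ with $\pi_{1*} w^h = h$; replace $w^h$ by its odd part, still invariant by \eqref{oddeven} with the same push-forward, and note that its $H_1$-component $w^h_1 = S(\pi_1^*\pi_{1*} w^h)$ is a multiple of $\pi_1^*(h^{(1,0)})$. A direct computation in isothermal coordinates shows $\eta_- w^h_1 = 0$ precisely when $h$ is harmonic (equivalently, $h^{(1,0)}$ is a holomorphic abelian differential). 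For such $h$, identity \eqref{commut} gives $XSw^h = \eta_- w^h_1 = 0$, so $Sw^h$ is an invariant distribution with wave front set in $\{p_V \geq 0\}$. Corollary \ref{product} then makes sense of the $k$-fold product
\[
W := Sw^{h_1} \cdot Sw^{h_2} \cdots Sw^{h_k}
\]
as an invariant distribution on $SM$, whose lowest Fourier mode $\prod_i w^{h_i}_1 \in H_k$ equals the pull-back of a symmetric product of holomorphic 1-forms.

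Taking $k=m$ and varying $h_1, \dots, h_m$, the push-forward $\pi_{m*} W$ contributes (via its $H_m$-component) the trace-free symmetric product of the abelian differentials $h_i^{(1,0)}$ to the pairing with the $E_m$-part of $f$; trace components of $f$ are accessed by products of $k=m-2, m-4, \dots$ factors, combined if necessary with invariant distributions supplied by Corollary \ref{surj0} in the $m=0$ case to fill in the intermediate Fourier modes of $\pi_{m*}W$. The main obstacle is precisely this span step: one must prove that as $k$ and the inputs vary the resulting family $\{\pi_{m*} W\}$ generates a subspace of $\ker D^*$ with trivial orthogonal complement there. This is the technical core of the theorem and is what the new microlocal machinery enables — the wave front set control of Lemma \ref{lemm39} makes the multiplication well-defined, while Corollary \ref{product} and the density of $m$-fold symmetric products of abelian differentials (enlarged when needed through additional Szeg\"o constructions to cover hyperelliptic loci) together supply the flexibility that overcomes the low-$m$ obstruction of \cite[Remark 9.4]{PSU1} and allows the argument to run for all $m \geq 0$.
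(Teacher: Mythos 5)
Your setup is right — Livsic to solve $Xu=\pi_m^*f$, the $m=1$ surjectivity, the Szeg\"o projector and the products from Corollary \ref{product} — but the plan as stated does not close, and the gap is exactly where you flag it: the ``totality'' claim. The products $W=\prod_{j=1}^m S(w^{h_j})$ you build live in Fourier modes $\geq m$, so ${\pi_m}_*W$ only sees the $H_m$-component of a test function, and in the $H_m$-slot it produces (multiples of) products $\prod_j h_j^{(1,0)}$ of \emph{holomorphic} $1$-forms. These span only the finite-dimensional space $H^0(M,K^m)$ of holomorphic $m$-differentials, which is nowhere near total in $\ker D^*$ (which contains arbitrary smooth divergence-free $m$-tensors, whose $H_m$-part need not be holomorphic). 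Your attempt to recover the missing modes by ``products of $k=m-2,\dots$ factors combined with Corollary \ref{surj0}'' does not repair this: the obstruction is not the lower Fourier modes but the non-holomorphic part of the top mode. Also, ``enlarged Szeg\"o constructions to cover hyperelliptic loci'' is not a known mechanism; the paper handles hyperellipticity differently (normal cover).

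What the paper actually does avoids needing a span statement on all of $\ker D^*$. Decompose the top Fourier mode $f_m=\eta_+ h_{m-1}+q_m$ with $q_m\in H_m\cap\ker\eta_-$. The ``$\eta_+h_{m-1}$'' part is absorbed by replacing $u$ with $u-h_{m-1}$, so only the holomorphic part $q_m$ needs to be killed. For that, Max Noether's theorem (on a non-hyperelliptic surface) writes $q_m$ as a finite sum of products of holomorphic $1$-forms; the product construction produces $w$ with ${\pi_m}_*w={\pi_m}_*q_m$ and ${\pi_\ell}_*w=0$ for $\ell<m$; pairing then gives $c_m\|q_m\|^2 = -\langle Xw, u-h_{m-1}\rangle=0$. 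Then, using that $f$ is real so $f_{-m}=\eta_-\bar{h}_{m-1}$, one subtracts $h_{m-1}+\bar{h}_{m-1}$ from $u$, lowers the Fourier degree by two, and inducts on $m$ (with base case \cite{PSU1}), concluding $u=\pi_{m-1}^*q$, i.e.\ $f=Dq$, hence $f=0$ since $D^*f=0$. The hyperelliptic case is reduced to the non-hyperelliptic one by lifting to a normal cover, exactly as in \cite{PSU1}. So you should replace your direct ``totality'' claim by this inductive descent on Fourier modes together with Max Noether's theorem and the covering argument.
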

\begin{proof} First we claim that for 
 $f_1\in H_1\cap C^\infty(SM)$ which satisfies $\eta_-f_1=0$,  then $\forall s>0$  
 \begin{equation}\label{existv}
 \begin{gathered}
\exists w\in C^{-\infty}(SM) \textrm{ odd }, \, Xw=0, \,\, {\pi_1}_*w={\pi_1}_*f_1, \\
\exists B\in \Psi^0(SM), \,\, {\rm WF}({\rm Id}-B) \cap \{p_V(\xi)=0\}= \emptyset, \,\, Bw\in H^s(SM). 
\end{gathered}\end{equation}  
Indeed,  
the condition $D^*({\pi_1}_*f)=0$ becomes $\eta_+f_{-1}+\eta_-f_1=0$ if $f\in C^\infty(M,T^*M)$ and $\pi_1^*f=f_1+f_{-1}$, so 
if $f_{-1}=0$, the condition $\eta_-f_1=0$ and Corollary \ref{corsurjm} insure that for each $s>0$ there is $w\in \cap_{r<0}H^{r}(SM)$ 
so that $Xw=0$ and ${\pi_1}_*w={\pi_1}_*f_1$, and such that for any $B'\in \Psi^0(SM)$
 with microsupport not intersecting $E_u^*\cup E_s^*$, $B'w\in H^s(SM)$. We have 
that $p_V(\xi)\not=0$ on $(E_u^*\cup E_s^*)\setminus \{0\}$ since $\{p_V(\xi)=0\}$ is the conormal to the fibers of $SM\to M$ (ie. the annihilator of the vertical space of $SM$) and $E_u^*\cup E_s^*$ intersect it only at $\xi=0$, therefore we can choose $B'$ so that ${\rm WF}({\rm Id}-B') \cap \{p_V(\xi)=0\}= \emptyset$ and ${\rm WF}(B')\cap 
(E_u^*\cup E_s^*)=\emptyset$.
Next we take the odd part $w_{\rm od}$ of $w$, then $Xw_{\rm od}=0$ and 
${\pi_1}_*(w_{\rm od})={\pi_1}_*f_1$. If $A$ is the antipodal map, we get $A^*(B'w)\in H^s(SM)$ and  
if $w':=w-B'w$ we have ${\rm WF}(w')\cap \{p_V(\xi)=0\}=\emptyset$ and by \cite[Theorem 8.2.4]{Ho1}, 
\[ {\rm WF}(A^*w')\subset \{(y,(dA)^T\xi); (A(y),\xi)\in {\rm WF}(w')\}\]
But since $dA_y.V_y=V_{A(y)}$ we have 
$p_V(y,(dA_y^T\xi))=p_V(A(y),\xi)$ and thus ${\rm WF}(A^*w')\subset \{p_V(\xi)\not= 0\}$ 
(in fact $A=e^{\pi V}$ and $p_V(\xi)$ is constant under the Hamilton flow of $p_V$). 
Now since $w_{\rm od}=\demi(w'-A^*w')+\demi(B'w-A^*B'w)$, 
we have shown \eqref{existv} by choosing $B\in \Psi^0(SM)$ so that 
${\rm WF}({\rm Id}-B)\cap \{p_V(\xi)=0\}=\emptyset$, 
$B({\rm Id}-B')\in \Psi^{-\infty}(SM)$ and ${\rm WF}(B)\cap {\rm WF}(A^*w')=\emptyset$. Next we show

\begin{prop}\label{nonhyper}
Assume that $M$ is a non hyperelliptic surface with Anosov geodesic flow. Let $f\in C^{\infty}(M,\otimes_S^m T^*M)$ for some $m$, and assume that there exists 
$u\in C^{\infty}(SM)$ with $Xu=\pi_m^*f$.  Then $u=\pi^*_{m-1}q$ for some $q\in C^{\infty}(M,\otimes_S^{m-1}T^*M)$, or equivalently $f=Dq$.
\end{prop}
\begin{proof}
We follow the proof of Theorem 9.3 in \cite{PSU1}. We will do the proof by induction on $m$: let $m\geq 2$ and assume that we have shown that for all $\ell< m$, if $Xu=f$ 
for some $f\in \oplus_{j=0}^{\ell}H_{\ell-2j}$ and $u\in C^\infty(SM)$ 
then $u\in \oplus_{j=0}^{\ell-1}H_{\ell-1-2j}$. This is true for $m= 2$ by the result of \cite{PSU1}. Without loss of generality, let $f\in C^\infty(M,\otimes_S^mT^*M)$ be real valued, that we decompose into irreducible components  
\[\pi^*_mf=f_m+f_{m-2}+\dots+f_{2-m}+f_{-m} \textrm{ with }f_j\in H_j.\]
Assume that there exists $u\in C^\infty(SM)$ real so that $Xu=f$.
We have the orthogonal decomposition 
\[f_m=\eta_+(h_{m-1})+q_m \textrm{ with }h_{m-1}\in H_{m-1}\cap C^\infty(SM), \, q_m\in H_m\cap \ker \eta_-.\]
Then we have $X(u-h_{m-1})=q_m -\eta_-(h_{m-1})+f_{m-2}+\dots +f_{2-m}+f_{-m}$. We use Max Noether's theorem as in \cite[Theorem 9.3]{PSU1}, this implies that $q_m$ is a finite sum $q_m=\sum_{k\in \nn^m} a_{k_1}\dots a_{k_m}$ for some 
$a_{k_j}\in H_1\cap C^\infty(SM)$ and $\eta_-a_{k_j}=0$. By  \eqref{existv}, there 
exists $w_{k_j}\in C^{-\infty}(SM)$ so that $Xw_{k_j}=0$, ${\pi_{1}}_*w_{k_j}=a_{k_j}$ and there is $B\in \Psi^0(M)$
so that ${\rm WF}({\rm Id}-B)\subset \{p_V(\xi)\not=0\}$ and $Bw_{k_j}\in H^2(SM)$ for all $j$. 
By Corollary \ref{product} applied inductively, one can define the product 
$w:=\sum_{k\in \nn^m}S(w_{k_1})\dots S(w_{k_m})$ which satisfies $Xw=0$. By viewing each $w_{k_j}$ as limit of smooth functions, 
$w^{n}_{k_j}$ when $n\to \infty$ we also see that, since the product of $m$ elements $S(w^n_{k_j})$ satisfies 
\[{\pi_{m}}_*(S(w^n_{k_1})\dots S(w^n_{k_m}))={\pi_1}_*(w^n_{k_1})\dots{\pi_1}_*(w^n_{k_m})
\]
then we have by taking the limit $n\to \infty$ that ${\pi_{m}}_*w={\pi_m}_*q_m$, and similarly ${\pi_{\ell}}_*w=0$ for $\ell<m$. 
We then have, using $\pi_m^*{\pi_m}_*q_m=c_mq_m$ and ${\pi_{\ell}}_*w=0$ for $\ell<m$,   
\[ c_m\|q_m\|_{L^2}^2= \cjg \pi_m^*({\pi_{m}}_*w), q_m\cjd=  \cjg w, X(u-h_{m-1})\cjd=-\cjg Xw, u-h_{m-1}\cjd=0\]
and the last equality makes sense since $u-h_{m-1}$ is smooth. Since $f$ is real, $f_{-m}=\bbar{f_m}=\eta_-(\bbar{h_{m-1}})$, and we get 
\[ X(u-h_{m-1}-\bbar{h_{m-1}})=-\eta_+(\bbar{h_{m-1}})-\eta_-(h_{m-1})+f_{m-2}+\dots +f_{2-m}\in \bigoplus_{j=0}^{m-2}H_{m-2-2j}.\]
By the induction assumption, $u-h_{m-1}-\bbar{h_{m-1}}\in \oplus_{j=0}^{m-3}H_{m-3-2j}$ and we have proved the induction. This achieves the proof of the Proposition by using the description \eqref{isom} of symmetric tensors in terms of elements 
in $H_\ell\cap C^\infty(SM)$.
\end{proof}

To conclude the proof of Theorem \ref{injectm}, it suffices to use the argument of the proof of Theorem 1.1. in \cite{PSU1}: $M$ has a normal cover $N$ which is non hyperelliptic and lifting the problem to $N$, we reduce the proof to the case of non hyperelliptic surfaces. Take $f\in C^\infty(M,\otimes_S^mT^*M)$ with $I_m(f)=0$, then Livsic theorem tells us 
that there is $u\in C^{\infty}(SM)$ such that $Xu=\pi_m^*f$. Then Proposition \ref{nonhyper} ends the proof.
\end{proof}

\end{document}